\StrBehind*{\jobname}{.}[\DocOpts]
\newtheorem*{theorem*}{Theorem}
\newtheorem{theorem}{Theorem}[section]
\newtheorem{lemma}[theorem]{Lemma}
\newtheorem{corollary}[theorem]{Corollary}
\newtheorem{fact}[theorem]{Fact}
\newtheorem{proposition}[theorem]{Proposition}
\newtheorem{claim}[theorem]{Claim}
\theoremstyle{definition}
\newtheorem{example}[theorem]{Example}
\newtheorem{remark}[theorem]{Remark}
\newtheorem{definition}[theorem]{Definition}
\newtheorem*{definition*}{Definition}
\newtheorem{question}[theorem]{Question}
\newtheorem*{question*}{Question}
\newenvironment{customthm}[1]
  {\innercustomthm}
  {\endinnercustomthm}
\def\endo{\operatorname{End}}
\def\fix{\operatorname{Fix}}
\def\dcf{\operatorname{DCF}}
\def\acfa{\operatorname{ACFA}}
\def\acf{\operatorname{ACF}}
\def\cb{\operatorname{Cb}}
\def\tp{\operatorname{tp}}
\def\qftp{\operatorname{qftp}}
\def\qf{\operatorname{qf}}
\def\acl{\operatorname{acl}}
\def\dcl{\operatorname{dcl}}
\def\alg{{\operatorname{alg}}}
\def\perf{\operatorname{perf}}
\def\aut{\operatorname{Aut}}
\def\loc{\operatorname{loc}}
\def\trdeg{\operatorname{trdeg}}
\def\su{\operatorname{SU}}
\def\nmdeg{\operatorname{nmdeg}}
\def\pgl{\operatorname{PGL}}
\def\GL{\operatorname{GL}}
\def\C{\mathcal{C}}
\def\G{\mathcal{G}}
\def\U{\mathcal{U}}
\def\I{\mathcal{I}}
\def\PP{{\mathbb{P}}}
\def\QQ{\mathbb{Q}}
\def\ZZ{\mathbb{Z}}
\def\CC{\mathbb{C}}
\newcommand{\kk}{k}
\renewcommand{\AA}{\mathbb{A}}
\newcommand{\isom}{\cong}
\providecommand{\id}{\operatorname{id}}
\newcommand{\ra}[1][]{\xrightarrow{#1}}
\newcommand{\Set}[1]{\{#1\}}
\newcommand{\shp}[1]{{#1}^\sharp}
\newcommand{\dom}[1]{{\operatorname{dom} (#1)}}
\newcommand{\pp}[1]{(#1)}
\newcommand{\dto}{\dashrightarrow}
\def\Ind#1#2{#1\setbox0=\hbox{\(#1x\)}\kern\wd0\hbox to 
0pt{\hss\(#1\mid\)\hss}\lower.9\ht0\hbox to 
0pt{\hss\(#1\smile\)\hss}\kern\wd0}
\def\ind{\mathop{\mathpalette\Ind{}}}
\def\notind#1#2{#1\setbox0=\hbox{\(#1x\)}\kern\wd0\hbox to 
0pt{\mathchardef\nn=12854\hss\(#1\nn\)\kern1.4\wd0\hss}\hbox to 
0pt{\hss\(#1\mid\)\hss}\lower.9\ht0 \hbox to 
0pt{\hss\(#1\smile\)\hss}\kern\wd0}
\def\nind{\mathop{\mathpalette\notind{}}}
\newcommand{\qedv}{\vspace{.1in}\hfill\(\maltese\)}
\newenvironment{claimproof}[1]{%
\par\vspace{.1in}\noindent\emph{Proof of Claim:}\space#1}{\qedv}
\date{\today}
\title[A transformal transcendence result]{A transformal transcendence result for algebraic difference equations}
\author{Moshe Kamensky}
\address{Moshe Kamensky\\ Ben-Gurion University of the Negev\\ Department of Mathematics\\ Be'er-Sheva, 8410501\\ Israel}
\email{kamensky.bgu@gmail.com}
\author{Rahim Moosa}
\address{Rahim Moosa\\ University of Waterloo\\ Department of Pure Mathematics\\ 200 University Avenue West\\ Waterloo, Ontario \  N2L 3G1\\ Canada}
\email{rmoosa@uwaterloo.ca}
\thanks{R. Moosa was partially supported by an NSERC Discovery Grant and a University of Waterloo Faculty of Mathematics Research Chair.}
\begin{document}

\keywords{algebraic dynamical system, degree of nonminimality, difference field, finite rank quantifier free type, invariant subvarieties, isotriviality, rational \(\sigma\)-variety, }
\subjclass[2010]{03C45, 12H10, 12L12 37P55}

\begin{abstract}
Given an algebraic difference equation of the form
$$\sigma^n(y)=f\big(y, \sigma(y),\dots,\sigma^{n-1}(y)\big)$$
where $f$ is a rational function over a field~$k$ of characteristic zero on which $\sigma$ acts trivially,
it is shown that if there is a nontrivial algebraic relation amongst any number of $\sigma$-disjoint solutions, along with their $\sigma$-transforms, then there is already such a relation between three solutions.
Here ``$\sigma$-disjoint" means $a\neq\sigma^r(b)$ for any integer~$r$.
A weaker version of the theorem, where ``three" is replaced by $n+4$, is also obtained when $\sigma$ acts nontrivially on~$k$.
Along the way a number of other structural results about primitive rational dynamical systems are established.
These theorems are deduced as applications of a detailed model-theoretic study of finite-rank quantifier-free types in the theory of existentially closed difference fields of characteristic zero.
In particular, it is also shown that the degree of nonminimality of such types over fixed-field parameters is bounded by~$2$.
\end{abstract}

\maketitle

\setcounter{tocdepth}{1}
\tableofcontents


\vfill
\pagebreak
\section{Introduction}

\noindent
The starting point of this paper is the following theorem about algebraic differential equations established several years ago (this is~\cite[Theorem A]{c3}):
{\em Given an algebraic differential equation over constant parameters, if there is any nontrivial algebraic relation amongst any number of distinct nonalgebraic solutions, along with their derivatives, then there is already such a relation between two solutions.}
(If one allows nonconstant parameters than one has to replace ``two" with ``three" in the conclusion, and assume the equation is of order greater than~$1$.)
One is lead to ask: Does the analogous fact hold true of algebraic {\em difference} equations?

Let us make the question more precise.
Fix an algebraically closed field~$k$ of characteristic zero, equipped with an automorphism~$\sigma:k\to k$.
Consider
\begin{equation}
\label{generalde}
P\big(y, \sigma(y),\dots,\sigma^n(y)\big)=0\tag{E}
\end{equation}
an algebraic difference equation where $y$ is a single variable and~$P\in k[y_0,\dots,y_n]$ is an irreducible polynomial.
In the differential case we were interested in arbitrary {\em distinct} solutions, but here that would not make much sense.
For example, in the case when~$\sigma$ is the identity on~$k$, if~$b_1$ is a solution to~(\ref{generalde}) then so is $b_2:=\sigma(b_1)$, regardless of the equation, and there is an obvious nontrivial transformally algebraic relation between these solutions. 
So, at the very least, we should be restricting our attention to solutions that are not transforms of each other to start with.
That is, for each $m\geq 1$, we consider the following transcendence condition that may or may not hold of solutions to~(\ref{generalde}):
\begin{itemize}
\item[$(C_m)$]
Suppose $b_1,\dots,b_m$  are solutions to~(\ref{generalde}), not in~$k$.
If
$$(\sigma^r(b_i): r=0,\dots,n-1, i=1,\dots, m)$$
is not algebraically independent over~$k$ it is because $b_i=\sigma^\ell b_j$ for some $i<j$ and $\ell\in\ZZ$.
\end{itemize}
What $(C_m)$ expresses is that any $m$ pairwise ``$\sigma$-disjoint" solutions to~(\ref{generalde}) are as transformally independent of each other as is consistent with the fact that they each satisfy an order~$n$ algebraic difference equation.
Here we are implicitly considering solutions in arbitrary difference field extensions of $(k,\sigma)$.

Note that $(C_m)\implies(C_\ell)$ for all~$\ell\leq m$.

For example, it follows from work of Chatzidakis and Hrushovski in~\cite{acfa}, that the equation $\sigma(y)-y^2-1=0$ satisfies~$(C_m)$ for all $m\geq 1$.
At the opposite extreme, $\sigma(y)-y=0$ already fails to satisfy~$(C_2)$.

\begin{question*}
Is there an absolute bound~$\ell$ such that
$(C_\ell)\implies(C_m)$ for all~$m\geq\ell$?
If so, what is that bound?
\end{question*}

Restricting our attention to the autonomous case, and considering only equations presented in a certain natural form, we provide the following answer:

\begin{customthm}{1}
\label{c3thm-intro}
Suppose $\sigma$ is the identity on~$k$ and~$(\ref{generalde})$ is of the form 
$$\displaystyle \sigma^n(y)=f\big(y, \sigma(y),\dots,\sigma^{n-1}(y)\big)
$$
for some $f\in k(y_0,\dots,y_{n-1})$.
Then $(C_3)\implies(C_m)$ for all~$m$.
\end{customthm}

The is part of Corollary~\ref{c3thm} below.
The other part drops the assumption that~$\sigma$ is the identity on~$k$ but then only provides a bound depending on~$n$ (namely $n+4$).

While we expect the result to be sharp, we do not yet have an example of an autonomous equation satisfying $(C_2)$ but not $(C_3)$.

The reason we restrict to difference equations of the form appearing in Theorem~\ref{c3thm-intro} is that such equations give rise to {\em rational dynamical systems}, namely pairs $(V,\phi)$ where $V$ is an irreducible variety over~$k$ and $\phi:V\dto V$ is a dominant rational map.
And this article is really about (the applications of model theory to) algebraic dynamics.
Such applications have, by now, a long history, with notable achievements by Chatzidakis-Hrushovski in~\cite{acfa, ch1, ch2} and Medvedev-Scanlon in~\cite{alicetom}.
The immediate antecedent to this paper, however, is~\cite{qfint}, where we developed model-theoretic binding groups for rational dynamical systems.

Underpinning our proof of Theorem~\ref{c3thm-intro} is an analysis of the quantifier-free fragment of  the theory of existentially closed difference fields.
But we delay describing our general model-theoretic results until later in this introduction, beginning instead with two more theorems about rational dynamical systems that we prove along the way.
Implicit when we speak of rational dynamical systems is that $\sigma$ acts trivially on~$k$, and for now this assumption remains in place.

The following list of conditions on a rational dynamical system should give the reader a sense of what aspects of algebraic dynamics we are able to investigate.
Recall that the natural morphisms between rational dynamical systems are {\em dominant equivariant rational maps}, $f:(V,\phi)\dto(W,\psi)$, namely dominant rational maps $f:V\dto W$ over~$k$ such that $\psi f=f\phi$.

\begin{definition*}
\label{ad-def}
Suppose $(V,\phi)$ is a rational dynamical system.
\begin{itemize}
\item[(a)]
$(V,\phi)$ is {\em simple} if for any rational dynamical system $(W,\psi)$, and~$Z$ an irreducible invariant subvariety of $(V\times W,\phi\times\psi)$ that projects dominantly onto both~$V$ and~$W$, either $Z=V\times W$ or  $\dim Z=\dim W$.
\item[(b)]
$(V,\phi)$ is {\em primitive} if  whenever $(V,\phi)\dto(W,\psi)$ is a dominant equivariant rational map then $\dim W$ is either~$0$ or $\dim V$.
\item[(c)]
$(V,\phi)$ is {\em strongly primitive} if $(V',\phi')$ is primitive whenever $(V',\phi')$ admits a dominant equivariant rational map to~$(V,\phi)$ and $\dim V'=\dim V$.
\item[(d)]
$(V,\phi)$ is {\em isotrivial} if for some rational dynamical system $(W,\psi)$, and some $\ell\geq 1$,  there is an equivariant rational embedding of $(V\times W,\phi\times\psi)$  into $(\AA^\ell\times W,\id\times\psi)$ over $(W,\psi)$.
\item[(e)]
By a \emph{rich invariant family of subvarieties of $(V,\phi)$} we mean a rational dynamical system $(W,\psi)$ along with an irreducible invariant subvariety $Z\subseteq(V\times W,\phi\times\psi)$ such that:
$Z$ projects dominantly onto both~$V$ and~$W$, the general fibres of $Z\to W$ are absolutely irreducible subvarieties of~$V$, over distinct general points of~$W$ the fibres are distinct, and  
$\dim Z>\dim V$.
\end{itemize}
\end{definition*}

It is not hard to see that simple rational dynamics are strongly primitive.
In Section~\ref{xexample}, below, we will exhibit an example showing that the converse fails.

If we begin with an arbitrary rational dynamical system then, by taking successive equivariant images and finite covers, we eventually obtain a strongly primitive one.
It is therefore natural to focus first on the structure of strongly primitive rational dynamics.
We prove the following dichotomy:

\begin{customthm}{2}
\label{thm:primitive}
Suppose $(V,\phi)$ is a strongly primitive rational dynamical system.
Then one of the following holds:
\begin{itemize}
\item[(1)]
$(V,\phi)$ is simple and whenever $V_0$ is an irreducible invariant subvariety of of some cartesian power of $(V,\phi)$ that projects dominantly onto $V$ in each co-ordinate, then $(V_0,\phi)$ admits no rich invariant families of subvarieties, or
\item[(2)]
$(V,\phi)$ admits a dominant equivariant rational map $(V,\phi)\dto (V',\phi')$, where $\dim V'=\dim V$, and $(V',\phi')$ is isotrivial.
\end{itemize}
\end{customthm}

Dynamics on simple abelian varieties provide examples of both cases: an endomorphism of a simple abelian variety that is not of finite order will land in case~(1) while translation by a nontorsion $k$-point is an example of case~(2).
The former is due to Hrushovski~\cite{mm}, see the discussion in Example~\ref{example-sav} below, and the latter is worked out in Section~\ref{xexample} below.

In fact, case~(2) always implies that, up to birational equivalence, $\phi$ comes from an algebraic group action on~$V$ --  namely, there is a faithful algebraic group action $G\times V\to V$ such that~$\phi$ agrees with the action of some $k$-point of $G$.
Indeed, that this follows from~(2) was first observed (without using model theory) in~\cite[Corollary~A]{bms}, and then given a model-theoretic account in~\cite[Theorem~5.1]{qfint}.
As dynamical systems coming from algebraic group actions are generally well understood,
and as one can often reduce to the strongly primitive case, we expect the above theorem to allow a reduction of some open questions in algebraic dynamics to case~(1).
A further analysis of case~(1) using model-theoretic tools exists, however it appears to require the more flexible category of algebraic correspondences rather than rational dynamics, and we do not pursue it here.

Combining Theorem~\ref{thm:primitive} with earlier work in~\cite{qfint} we are able to give the following criteria for simplicity:

\begin{customthm}{3}
\label{thm:simple}
Suppose $(V,\phi)$ is a rational dynamical system.
Suppose that any proper irreducible invariant subvariety~$Z$ of $(V^3,\phi)$ that projects dominantly onto both the first co-ordinate and the last $2$ co-ordinates is of dimension $2\dim V$.
Then $(V,\phi)$ is simple.
\end{customthm}

This theorem is saying that in the definition simplicity given above we need only consider $(W,\psi)=(V^2,\phi)$.
We expect that one can replace $(V^2,\phi)$ by $(V,\phi)$ itself, but so far we only see how to do so after replacing $\phi$ by an iterate.

Theorems~\ref{thm:primitive} and~\ref{thm:simple} are special cases of Corollaries~\ref{cor:zdexchange} and~\ref{cor:nmdeg-geom} below, respectively, which hold for what are called {\em rational $\sigma$-varieties}.
These are defined in~Section~\ref{subsect:rsv}, and generalise rational dynamical systems by allowing for a twist by an automorphism of the base field~$k$.

\medskip
\subsection{Our general model-theoretic results}
A distinguishing feature of our work here, and in~\cite{qfint}, is the systematic focus on the quantifier-free fragment of the theory of existentially closed difference fields in characteristic zero ($\acfa_0$).
Theorems~\ref{thm:primitive} and~\ref{thm:simple}, and indeed their generalisations to rational $\sigma$-varieties, 
are geometric formulations of statements we prove about quanitifer-free types in~$\acfa_0$.
We describe those now, referring the reader to Section~\ref{prelims} for a review of the model-theoretic terminology.

We work in a sufficiently saturated model $(\U,\sigma)\models\acfa_0$, with fixed field~$\C$, and over an algebraically closed inversive difference subfield $(k,\sigma)$.
So we are no longer assuming that $\sigma$ acts trivially on~$k$, though, as we will see, that assumption will at times improve our results significantly.

Among the complete quantifier-free types over~$k$, we restrict our attention the the {\em rational} types, those that imply a formula of the form $\sigma(x)=\phi(x)$ for some rational (multivariable) function~$\phi$.
Using the canonical base property (CBP), established in~\cite{pillay-ziegler}, and the Zilber dichotomy for minimal types, which follows from the CBP but was first established in~\cite{acfa}, we prove:

\begin{customthm}{4}
\label{thm:primitive-mt}
Suppose~$p$ is a rational type over~$k$ satisfying the following property:
if $a\models p$ and $b\in\acl(ka)\setminus k$ is such that $\qftp(b/k)$ is rational then $a\in\acl(kb)$.
Then one of the following hold:
\begin{enumerate}
\item
$p$ is minimal and one-based, or
\item
there is a finite-to-one 
rational map $p\to q$ where~$q$ is rational and qf-internal to~$\C$.
\end{enumerate}
\end{customthm}

This appears as Theorem~\ref{thm:nonorth} below.
It is the analogue of a fact about complete finite $U$-rank types in stable theories established in~\cite[Proposition~2.3]{moosa-pillay2014}.

Theorem~\ref{thm:primitive} is a geometric formulation of Theorem~\ref{thm:primitive-mt} in the case that $k\subset\C$.

Combining Theorem~\ref{thm:primitive-mt} with our earlier results on quantifier-free binding groups from~\cite{qfint}, allows us to obtain bounds on the {\em degree of nonminimality} for a rational type.
This quantity, when~$p$ is a nonalgebraic and nonminimal rational type, denoted by $\nmdeg(p)$, is the least positive integer~$d$ such that there are realisations $a_1,\dots,a_d$ of~$p$, and a nonalgebraic forking extension of~$p$ to a complete quantifier-free type over $k(a_1,\dots,a_d)$.
Degree of nonminimality for complete finite $U$-rank types in stable theories were introduced in~\cite{nmdeg} where initial bounds for the theory $\dcf_0$ were obtained.
It was later shown, in~\cite{nmdeg2}, that $\dcf_0$ admits an absolute bound of~$2$ for the degree of nonminimality, which goes down to~$1$ if the type is over constant parameters.
Such bounds are certainly specific to~$\dcf_0$ and similar theories: recent work of Baldwin-Freitag-Mutchnik~\cite{bfm} shows that the degree of nonmininality may take on any positive integer value if one allows arbitrary $\omega$-stable theories.
We obtain the following weaker (and we expect not optimal) bounds for rational types in $\acfa_0$.

\begin{customthm}{5}
\label{thm:simple-mt}
Suppose~$p$ is a rational type over~$k$.
\begin{itemize}
\item[(a)]
$\nmdeg(p)\leq\dim(p)+3$.
\item[(b)]
If $k\subset\C$ then $\nmdeg(p)\leq 2$.
\end{itemize}
\end{customthm}

This appears as Theorem~\ref{thm:nmdeg} below.

Theorem~\ref{thm:simple} is a geometric formulation of Theorem~\ref{thm:simple-mt}(b).

Finally, let us give the model-theoretic formulation (and generalisation) of Theorem~\ref{c3thm-intro} itself.
Informed by ideas in~\cite{c3} we introduce here the notion of {\em $m$-disintegration} for a rational type: every set of $m$ realisations is independent unless one is a transform of another by some power of~$\sigma$.
We are able to show:

\begin{customthm}{6}\label{d3-intro}
Suppose $p$ is a rational type over~$k$.
\begin{itemize}
\item[(a)]
$(\dim(p)+4)$-disintegration implies $m$-disintegration for all~$m$.
\item[(b)]
If $k\subset\C$ then $3$-disintegration implies $m$-disintegration for all~$m$.
\end{itemize}
\end{customthm}

This appears as Theorem~\ref{d3} below (taking into account Theorem~\ref{thm:nmdeg}).

\medskip
\subsection{Plan of the paper}
After recalling (and elaborating on) preliminaries in Section~\ref{prelims}, we begin our study of rational types that are minimal in Section~\ref{sect:qfmin}.
In particular, we give the geometric characterisation of minimality there in terms of simple rational $\sigma$-varities.
In Section~\ref{sec:zd} we give an account of the Zilber dichotomy for minimal rational types (Theorem~\ref{thm:zd}).
This can be extracted from the results in~\cite{acfa} or~\cite{pillay-ziegler}.
While we do rely on the latter,  we obtain cleaner statements from specialising to the case of rational types, and we are able give an explicit geometric formulation of the dichotomy (Corollary~\ref{cor:zd}).
In Section~\ref{sec:x} we introduce the exchange property and prove Theorem~\ref{thm:primitive-mt}, and its geometric formulation Theorem~\ref{thm:primitive}.
Then, in Section~\ref{sec:nmdeg}, we introduce degree of nonminimality and prove Theorem~\ref{thm:simple-mt} and its geometric formulation Theorem~\ref{thm:simple}.
These things are put together in Section~\ref{sec:disintegration} to deduce Theorem~\ref{d3-intro} and finally Theorem~\ref{c3thm-intro}.
Two examples are worked out in a final Section~\ref{sec:example}: a nonminimal rational type that satisfies exchange and a rational type whose degree of nonminimality is~$2$ and which is $2$-disintegrated but not $3$-disintegrated.

\bigskip
\section{Preliminaries}
\label{prelims}

\noindent
We work in a sufficiently saturated model $(\U,\sigma)\models \acfa$ with fixed field denoted by~$\C$.
We will eventually assume that the characteristic is zero, but for now we allow positive characteristic.

We also fix a base algebraically closed inversive difference subfield $(k,\sigma)$.

In what follows we review and expand on some notions/terminology from~\cite{qfint}.

\medskip
\subsection{Generalised algebraic dynamics}
\label{subsect:rsv}
By a {\em rational $\sigma$-variety} we mean an irreducible variety~$V$ over~$k$ equipped with a dominant rational map $\phi:V\dto V^\sigma$, from~$V$ to its transform with respect to the action of $\sigma$ on~$k$.
So, when~$\sigma$ is the identity on~$k$, what we call the {\em autonomous} case, a rational $\sigma$-variety is a rational dynamical system.

By an {\em invariant subvariety}~$Z$ of a rational $\sigma$-variety $(V,\phi)$ we will mean an irreducible (closed) subvariety $Z\subseteq V$ over~$k$ such that $Z\cap\dom{\phi}$ is nonempty and $\phi(Z)$ is Zariski dense in~$Z^\sigma$.
So we get an induced rational $\sigma$-variety $(Z,\phi|_Z)$.
Our invariant subvarieties will always be irreducible, but we will reinforce that by using the term ``irreducible invariant subvariety" throughout.

An {\em equivariant} rational map, $f:(V,\phi)\dto(W,\psi)$, is a rational map $f:V\dto W$ such that $\psi f=f^\sigma\phi$ as rational maps on~$V$.

We associate to a rational $\sigma$-variety its ``sharp" points, namely the $k$-definable set $(V,\phi)^\sharp:=\{a\in \dom{\phi}:\sigma(a)=\phi(a)\}$.
Here the action of~$\sigma$ is co-ordinatewise.
The {\em generic quantifier-free type} of $(V,\phi)$ is the collection of (quantifier-free) formulas $p(x)$ expressing that~$x$ is a Zariski generic point of~$V$ over~$k$ and that $x\in(V,\phi)^\sharp$.
Existential closedness of~$(\U,\sigma)$ ensures that this is a consistent and complete quantifier-free type.
Realisations are called {\em generic points} of $(V,\phi)$.

\medskip
\subsection{Rational types}\label{subsect:rt}
By a \emph{rational type over~$k$} in $x=(x_1,\dots,x_n)$ we mean a complete quantifier-free type, 
$p(x)\in S_{\qf}(k)$, which implies a formula of the form $\sigma(x)=\phi(x)$ 
for some $\phi=(f_1,\dots,f_n)$ where each $f_i\in  k(x)$ is a rational function.
The generic quantifier-free type of a rational $\sigma$-variety over~$k$ is rational, and all rational types arise this way.

Note that $\qftp(a/k)$ is rational if and only if the difference field that~$a$ generates over~$k$ is just the field generated by~$a$ over~$k$.
That is, rationality is equivalent to $k\langle a\rangle=k(a)$.
It is also worth observing that a consequence of $\qftp(a/k)$ being rational (and $k$ being inversive) is that $\acl(ka)=k(a)^{\alg}$.

If $p,q\in S_{\qf}(k)$ are rational types then by a \emph{rational map} $f:p\to q$ we mean that~$f$ is a rational function over~$k$ such that $f(a)\models q$ for all (equivalently some) $a\models p$.
These arise as the restrictions of dominant rational equivariant maps which we tend to also denote by $f:(V,\phi)\dto(W,\psi)$, where $p$ is the generic quantifier-free type of $(V,\phi)$ and $q$ is the generic quantifier-free type of $(W,\psi)$.
Note that, unlike in the case of complete types, $f$ need not
be surjective as a set map from $p(\U)\to q(\U)$.
For example, $x\mapsto x^2$ is a dominant equivariant map $(\AA^1,\id)\to(\AA^1,
\id)$ that does not restrict to a surjective map on the generic quantifier-free types (as there are transcendental fixed points whose square roots are not fixed).

We say that $f:p\to q$ is \emph{birational} if $k(a)=k(f(a))$ for some (equivalently all) $a\models p$.
This is equivalent to $f:V\dto W$ being birational.
We say that $f:p\to q$ is \emph{finite-to-one} if $a\in {k(f(a))}^{\alg}$ for 
some (equivalently all) $a\models p$.
This is equivalent to $\dim V=\dim W$.

Nonforking independence in $\acfa$ has an explicit (quantifier-free) characterisation: $A\ind_CB$ if and only if the algebraically closed inversive difference fields generated by $A\cup C$ and $B\cup C$ are algebraically disjoint over the algebraically closed inversive difference field generated by~$C$.
It follows that if $p=\qftp(a/k)$ is rational, and $L\supseteq k$ is a difference field extension, then $a\ind_kL$ if and only if $\trdeg(k(a)/a)=\trdeg(L(a)/L)$.
For any positive integer~$d$ we denote by $p^{(d)}$ the (unique) quantifier-free type of a $d$-tuple of independent realisations of~$p$.
If $p$ is the generic quanitifier-free type of $(V,\phi)$ then $p^{(d)}$ is the generic quantifier-free type of the rational $\sigma$-variety $(V^d,\phi)$ where $\phi$ acts diagonally on $V^d$.

By the {\em dimension} of a rational type~$p$ we mean $\trdeg(k(a)/a)$ for any $a\models p$.
This agrees with $\dim V$ where~$p$ is the generic quantifier-free type of $(V,\phi)$.

\medskip
\subsection{Orthogonalilty, qf-internality, and binding groups}
\label{subsect:nonorth}
By a slight abuse of notation, given a tuple~$a$, we write $a\ind_k\C$ to mean that $a$ is independent of every finite tuple from~$\C$ over~$k$.
(This is only an abuse of notation because normally we only talk about independence between small sets, whereas~$\C$ has the cardinality of the universe.)
That this is a condition on $\qftp(a/k)$, at least when that quantifier-free type is rational, follows from~\cite[Proposition~2.7]{qfint}, which says that $a\ind_k\C$ if and only if $k(a)\cap\C\subseteq k^{\alg}$.
Hence $a\ind_k\C$ for some realisation of a rational type $p\in S_{\qf}(k)$ if and only if this is the case for every realisation.
We then say that $p$ is  \emph{weakly orthogonal} to~$\C$, denoted by $p\perp^w\C$.
A rational and stationary $p$ is defined to be \emph{orthogonal} to~$\C$, denoted by $p\perp\C$, if for every difference field extension $K\supseteq k$ the nonforking extension of~$p$ to $S_{\qf}(K)$ is weakly orthogonal to~$\C$.

The opposite notion, that of internality to~$\C$, is not a property of the quantifier-free type.
As observed in~\cite[$\S$2.4]{qfint}, following ideas from~\cite[$\S$5]{acfa}, this can be resolved by restricting to rational types and replacing $\dcl(Kc)$ with $K(c)$.

\begin{definition}
\label{def:qfint}
Suppose $p\in S_{\qf}(k)$ is a rational type over an algebraically closed difference field.
We say that $p$ is {\em qf-internal to~$\C$} if for some (equivalently all) $a\models p$ there is a difference field extension $K\supseteq k$ such that $a\ind_kK$ and $a\in K(c)$ for some tuple $c$ from $\C$.
\end{definition}

In~\cite[$\S$2.4]{qfint} a detailed study of qf-internality is carried out.
The reader can look there for a proof that ```for some" is equivalent to ``for all" in the definition.
It is also shown there that $p\not\perp\C$ if and only if $p$ admits a rational map to a positive-dimensional rational type that is qf-internal to~$\C$.

It is shown in~\cite[Proposition~3.6]{qfint} that, in characteristic zero, if $p$ is the generic quantifier-free type of $(V,\phi)$ then qf-internality of~$p$ to~$\C$ is equivalent to the {\em isotriviality} of~$(V,\phi)$.
We defined isotriviality in the Introduction, but only for the autonomous case.
In fact, that definition works verbatim with rational $\sigma$-varieties in place of rational dynamical systems.

Fix now a rational type $p\in S_{\qf}(k)$ that is qf-internal to~$\C$, and assume the characteristic is zero.
The {\em quantifier-free binding group of~$p$ relative to~$\C$}, denoted by $\aut_{\qf}(p/\C)$, was defined in~\cite{qfint} as the group of permutations~$\alpha$ of $p(\U)$ such that
\begin{itemize}
	\item[(A)] for any tuple \(u\) of realisations of~\(p\), and any tuple \(c\) of elements of \(\C\),
		\(\qftp(u,c/k)=\qftp(\alpha(u),c/k)\).
\end{itemize}
The main accomplishment of~\cite{qfint} was to show that $\aut_{\qf}(p/\C)$, along with its action on $p(\U)$, has a quantifier-free definable avatar.
But beyond this, the group action was given a geometric meaning that we now explain.
Let $\pp{V,\phi}$ be the isotrivial rational $\sigma$-variety corresponding to~$p$.
The main results of~\cite{qfint} are the following: there exists an algebraic group~$G$ over~$k$ acting faithfully on~$V$ by birational transformations, $\theta:G\times V\dto V$,
such that $\{\theta_g:g\in G\}$ is precisely the group of those birational transformations of~$V$ that:
\begin{itemize}
\item[(B)]
preserve every irreducible invariant subvariety of $(V^r,\phi)\times(W,\id)$ over~$k$ projecting dominantly onto each $V$-co\"ordinate, for all $r\geq 1$ and all trivial dynamics $(W,\id)$ over~$k$.
(Here we view a birational transformation of~$V$ as acting co-ordinate wise on $V^r$ and trivially on $W$.)
\end{itemize}
Moreover, there is
an algebraic group isomorphism 
\(\rho:G\ra{}G^\sigma\) making~$\theta$ equivariant as a rational map from $(G\times V,\rho\times\phi)$ to $(V,\phi)$, and such that 
\(\shp{\pp{G,\rho}}(\U)\) is identified with $\aut_{\qf}(p/\C)$ by restricting the action to $p(\U)$.
We call $(G,\rho)$ the {\em binding group of $(V,\phi)$}.
Note that, as rational maps from \(V\) to \(V^\sigma\), we have 
	\(\rho(g)\circ\phi=\phi\circ{}g\) for all \(g\in{}G\).
Indeed, this is another way of writing the equivariance of \(\theta\).
In particular, if~\(\phi\) is invertible then \(\rho\) is conjugation by~$\phi$.

\medskip
\subsection{Canonical bases}
Recall from~\cite[Lemma 2.5(b)]{qfint} that the canonical base of a rational type, $\qftp(a/K)$ over a perfect difference field~$K$, is the difference field generated by the minimal field of definition of the Zariski locus $\loc(a,\sigma(a)/K)$.
We record the following useful additional facts.

\begin{lemma}\label{lem:cbred}
Suppose $p=\qftp(a/k)$ is rational and $K\supseteq k$ is a perfect difference field extension.
\begin{itemize}
\item[(a)]
$k\cdot\cb(a/K)=k(b)=k\langle b\rangle$ for some finite tuple $b$ from~$K$.
Moreover, $b$ can be chosen from the perfect closure of the field generated by a finite number of realisations of~$p$ over $k$.
\item[(b)]
Let $F$ be the minimal field of definition of $\loc(a/K)$.
Then
$$(k\cdot\cb(a/K))^{\perf}=\langle k\cdot F\rangle^{\perf}.$$
\end{itemize}
\end{lemma}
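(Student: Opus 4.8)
The plan is to deduce both parts from the description of $\cb(a/K)$ recalled just before the statement (from \cite[Lemma~2.5(b)]{qfint}): $\cb(a/K)$ is the difference field generated by the minimal field of definition~$D$ of the Zariski locus $\loc(a,\sigma(a)/K)$. The first observation to nail down is that since $\qftp(a/k)$ is rational, we have $\sigma(a)=\phi(a)$ for a rational function~$\phi$ over~$k$, so $\loc(a,\sigma(a)/K)$ is the graph of $\phi|_{\loc(a/K)}$; its minimal field of definition~$D$ therefore differs from the minimal field of definition $F$ of $\loc(a/K)$ only by adjoining coefficients of~$\phi$, all of which lie in~$k$. Hence $k\cdot D = k\cdot F$, and consequently $k\cdot\cb(a/K)=k\cdot\langle D\rangle = \langle k\cdot F\rangle$ up to the difference-field generation; taking perfect closures gives $(k\cdot\cb(a/K))^{\perf}=\langle k\cdot F\rangle^{\perf}$, which is part~(b). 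I would be slightly careful here that $\langle k\cdot D\rangle$ and $\langle k\cdot F\rangle$ generate the same difference field: since $\sigma$ acts on $k$ and $\sigma(F)\subseteq$ the difference field generated by $F$ over $k$ (as $\sigma$ applied to the defining equations of $\loc(a/K)$ defines $\loc(\sigma(a)/K)=\phi(\loc(a/K))^\sigma$, which is again defined over $k\cdot F$), this is routine.

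For part~(a), the field $k\cdot\cb(a/K)$ is finitely generated as a field over~$k$: indeed $\cb(a/K)$ is a difference field that is, as a \emph{field}, generated over $\QQ$ by the iterated $\sigma$-transforms of the finite tuple of coefficients defining $D$; but by the graph description above, $\sigma(D)\subseteq k\cdot D$ already (the equations defining $\loc(\sigma(a)/K)$ have coefficients that are rational functions over $k$ of those defining $\loc(a/K)$), so $k\cdot\cb(a/K)=k(b)$ where $b$ is the finite tuple of generators of $D$ over the prime field, and moreover $k\langle b\rangle = k(b)$. So the equality $k\cdot\cb(a/K)=k(b)=k\langle b\rangle$ is essentially forced once one observes $\sigma$-closure of $k\cdot D$.

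The genuinely substantive point is the ``moreover'' clause: that $b$ may be taken from the perfect closure of the field generated over~$k$ by finitely many realisations of~$p$. Here I would invoke the standard fact (going back to the stability-theoretic identification of canonical bases, and available in the difference-field setting via the framework of \cite{qfint}) that the canonical base of $\tp(a/K)$ lies in the definable closure of a \emph{Morley sequence} in that type over $K$ — concretely, there exist finitely many realisations $a_1,\dots,a_N$ of the nonforking restriction to $K$, hence realisations of $p$ over $k$, such that $\cb(a/K)\subseteq\dcl(a_1,\dots,a_N)$; since in a rational type definable closure of a tuple lies in the perfect closure of the field it generates (as $\acl(ka)=k(a)^{\alg}$ and $\dcl$ sits inside a purely inseparable extension of the generated field in $\acfa$), we conclude $b\in\big(k(a_1,\dots,a_N)\big)^{\perf}$. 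The main obstacle is thus bookkeeping: making sure the Morley-sequence realisations can genuinely be taken to realise $p=\qftp(a/k)$ itself (not merely its nonforking extension to $K$) — but this is immediate since a nonforking extension of $p$ to $K$ has realisations that still realise $p$ over $k$ — and checking that ``$\dcl$ of a tuple $\subseteq$ perfect closure of the generated field'' holds at the quantifier-free level we are working in, which follows from the explicit description of algebraic and definable closure in $\acfa$ for rational types recorded in Section~\ref{subsect:rt}.
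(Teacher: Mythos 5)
Your part~(b) and the ``moreover'' clause of part~(a) follow essentially the paper's route: the graph description of $\loc(a,\sigma(a)/K)$ gives the two inclusions $F\subseteq D^{\perf}$ and $D\subseteq (k\cdot F)^{\perf}$ (the paper proves the latter by an automorphism argument rather than asserting $k\cdot D=k\cdot F$ on the nose, which is safer in positive characteristic, but the idea is the same), and the ``moreover'' clause comes from the same fact you cite, namely that $\cb(a/K)$ lands in the perfect closure of the field generated by finitely many independent realisations of $\qftp(a/K)$ (this is \cite[Lemma~2.5(c)]{qfint} in the paper).

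There is, however, a genuine gap in your argument for finite generation in part~(a). You claim that $\sigma(D)\subseteq k\cdot D$ ``already'', justified by saying that the equations of $\loc(\sigma(a)/K)$ have coefficients that are rational functions over $k$ of those of $\loc(a/K)$. This conflates two different objects: $\sigma(D)$ is the image of the field $D$ under the ambient automorphism $\sigma$, i.e.\ the minimal field of definition of $\Gamma^{\sigma}=\loc(\sigma(a),\sigma^{2}(a)/\sigma(K))$ --- the locus over the \emph{transformed} parameter field $\sigma(K)$ --- and not the field of definition of $\loc(\sigma(a)/K)=\overline{\phi(\loc(a/K))}$. The latter is indeed defined over $(k\cdot F)^{\perf}$, but the former need not be: $\loc(\sigma(a)/\sigma(K))$ can be strictly larger than $\overline{\phi(\loc(a/K))}$ (e.g.\ when $\phi$ is not generically finite on $\loc(a/K)$), and there is no a priori containment of its field of definition in $k\cdot D$. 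If $\sigma$-closure of $k\cdot D$ were automatic, part~(a) would be nearly trivial; in fact the finite generation of $k\cdot\cb(a/K)$ \emph{as a field} is exactly the nontrivial content here. The paper gets it from the containment $k\cdot\cb(a/K)\subseteq k(a_1,\dots,a_\ell)^{\perf}$ (the Morley-sequence fact you already use for the ``moreover'' clause), combined with the observation that $k\cdot\cb(a/K)$ is finitely generated as a \emph{difference} field over $k$ and that the Frobenius preimages of $k(a_1,\dots,a_\ell)$ are difference fields, so that it sits inside a finite extension of $k(a_1,\dots,a_\ell)$. (The Frobenius step matters because this lemma is stated before the characteristic-zero hypothesis is imposed.) Your proof is repairable by deriving finite generation from that same containment rather than from the unproved $\sigma$-closure claim.
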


\begin{proof}
(a).
Let $E:=k\cdot\cb(a/K)$, the field generated by $\cb(a/K)$ over $k$.
So $E$ is a finitely generated difference field extension of~$k$ contained in~$K$.
On the other hand, by~\cite[Lemma~2.5(c)]{qfint}, there are $a_1,\dots,a_\ell$ realising $\qftp(a/K)$, and hence realising~$p$,  such that $E\subseteq k(a_1,\dots,a_\ell)^{\perf}$.
Using the fact that the preimage of $k(a_1,\dots,a_\ell)$ under any power of Frobenius is a difference field, and that $E$ is finitely generated over $k$ as a difference field, it follows that $E$ is contained in a finite extension of $k(a_1,\dots,a_\ell)$, and hence is also finitely generated as a field extension of~$k$.
That is, $E=k(b)=k\langle b\rangle$ for some finite tuple $b$.

(b).
This is similar to~\cite[Lemma~4.5]{pillay-ziegler}, but we give some details.

This time, let $E$ be the minimal field of definition of $\loc(a,\sigma(a)/K)$, so that $\cb(a/K)$ is the difference field generated by~$E$.
Then $\loc(a/K)$ is
defined over $E$ and hence
$$F\subseteq E^{\perf}\subseteq\cb(a/K)^{\perf},$$
showing the right-to-left inclusion.

For the left-to-right inclusion, let $\phi:\loc(a/k)\to\loc(a/k)^\sigma$ be a rational function over~$k$ such that $\sigma(a)=\phi(a)$, and denote its graph by $\Gamma(\phi)$.
Note that $\phi$ restricts to a rational function from $\loc(a/K)$ to $\loc(a/K)^\sigma$ whose graph we denote by $\Gamma(\phi|_{\loc(a/K)})$.
 Now fix $\alpha$ an arbitrary field-automorphism of $\U$ that fixes $k\cdot F$ pointwise.
Then
 \begin{eqnarray*}
 \loc(a,\sigma(a)/K)^\alpha
 &=&
 \loc(a,\phi(a)/K)^\alpha\\
 &=&
 \Gamma(\phi|_{\loc(a/K)})^\alpha\\
 &=&
 \Gamma(\phi|_{\loc(a/K)})\ \ \text{ as $\alpha$ is the identity on $k$ and $F$}\\
 &=&
 \loc(a,\phi(a)/K)\\
 &=&
  \loc(a,\sigma(a)/K).
 \end{eqnarray*}
 Hence $\alpha$ is the identity on $E$.
 We have shown that $E\subseteq (k\cdot F)^{\perf}$, from which it follows that $\cb(a/K)\subseteq\langle k\cdot F\rangle^{\perf}$, as desired.
\end{proof}

The following important property of canonical bases in $\acfa_0$ was proven in~\cite{pillay-ziegler}, though we phrase it somewhat differently and specialise to rational types.

\begin{fact}[Canonical base property for rational types in~$\acfa_0$]
\label{fact:cbp}
Suppose~$k$ is an algebraically closed inversive difference field of characteristic zero, $p=\qftp(a/k)$ is a rational type, and $K\supseteq k$ is an algebraically closed difference field extension.
Let $b$ be as in Lemma~\ref{lem:cbred}(a), that is, such that
$k\cdot\cb(a/K)=k(b)=k\langle b\rangle$.
Then $\qftp(b/k(a))$ is qf-internal to~$\C$.
\end{fact}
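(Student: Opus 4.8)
The plan is to deduce this directly from the canonical base property for $\acfa_0$ as proved by Pillay and Ziegler, taking care only to translate between their setup and ours (working with full types in $\acfa^{\eq}$, canonical bases in the model-theoretic sense, and internality in the $\dcl$-sense) and the quantifier-free/field-theoretic setup used throughout this paper (rational types, canonical bases as difference fields, qf-internality via field generation). First I would recall that, since $k$ is algebraically closed and inversive of characteristic zero and $p$ is rational, we have $\acl(ka) = k(a)^{\alg}$ and more generally the algebraic closure of a set agrees with the field-theoretic algebraic closure of the difference field it generates. So the model-theoretic canonical base $\Cb(a/K)$ — computed in $\acfa^{\eq}$, or equivalently the canonical base of $\stp(a/K)$ — generates (together with $k$) the same perfect difference field as the object $\cb(a/K)$ defined via the minimal field of definition of $\loc(a,\sigma(a)/K)$; this is exactly the content of \cite[Lemma~2.5]{qfint} together with Lemma~\ref{lem:cbred}(b) above. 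Hence the tuple $b$ with $k\cdot\cb(a/K) = k(b) = k\langle b\rangle$ from Lemma~\ref{lem:cbred}(a) interalgebraic over $k$ with a canonical base for $\tp(a/K)$ in the model-theoretic sense.

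Next I would invoke the canonical base property itself: in $\acfa_0$, for any $a$ and any (algebraically closed) $K$, the type $\tp(\Cb(a/K) / a)$ — equivalently $\tp(\Cb(a/K)/\acl(ka))$ — is almost internal to the fixed field $\C$. (This is \cite{pillay-ziegler}; in the difference-field setting the CBP holds, and the relevant "field of constants" is the fixed field.) Pulling this back along the interalgebraicity of $b$ with $\Cb(a/K)$ over $ka$, we get that $\tp(b/k(a))$ is almost $\C$-internal. The remaining work is to upgrade "almost internal, in the $\dcl$-sense" to "qf-internal" in the precise sense of Definition~\ref{def:qfint}: that there is a difference field extension $K' \supseteq k(a)$ with $b \ind_{k(a)} K'$ and $b \in K'(c)$ for some tuple $c$ from $\C$. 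For this I would first check that $\qftp(b/k(a))$ is itself a rational type — this holds because $b$ generates a difference field which, being contained in the perfect closure of $k(a_1,\dots,a_\ell)$ for realisations $a_i$ of the rational type $p$, is (after the argument in the proof of Lemma~\ref{lem:cbred}(a)) a finitely generated \emph{field} extension closed under $\sigma$, so $k(a)\langle b\rangle = k(a)(b)$. Then, for rational types, the discussion in \cite[$\S$2.4]{qfint} shows that almost-internality in the classical sense is equivalent to qf-internality (one replaces $\dcl(Kc)$ by the field $K(c)$, and passes from "almost" to "outright" internal by a standard argument absorbing the finitely many conjugates, using that we are free to enlarge the base parameter set and that $k(a)$ is not required to be inversive or algebraically closed here — or, if one prefers, one works over $\acl(ka)$ and notes qf-internality is insensitive to this).

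The main obstacle I anticipate is purely bookkeeping: matching the three notions of "canonical base" (the $\acfa^{\eq}$ imaginary, the canonical base of the associated \emph{stationary} type over an algebraically closed base in the stable-ish sense available in $\acfa$, and the field-of-definition object $\cb(a/K)$) and the two notions of internality, and verifying that the fixed field $\C$ plays the role of the "constants" in the Pillay–Ziegler CBP for difference fields. None of these steps is deep — each is either in \cite{qfint} or \cite{pillay-ziegler} or a short consequence — but getting the quantifiers in Definition~\ref{def:qfint} exactly right (the choice of $K'$, and the "for some equals for all" clause, which \cite[$\S$2.4]{qfint} supplies) is where care is needed. I would also make explicit use of Lemma~\ref{lem:cbred}(a)'s final assertion that $b$ can be taken inside the perfect closure of a field generated by finitely many realisations of $p$ over $k$, since that is what forces $\qftp(b/k(a))$ to be rational in the first place, which is a prerequisite for even speaking of its qf-internality.
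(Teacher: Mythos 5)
Your overall strategy --- reduce to Pillay--Ziegler --- is the same as the paper's, but there is a genuine gap at the step where you pass from ``almost $\C$-internal'' to ``qf-internal''. You propose to invoke the CBP of \cite{pillay-ziegler} as a black box, obtaining that $\tp(b/\acl(ka))$ is \emph{almost} internal to $\C$ (so $b\in\acl(L\,\C)$ for suitable $L$), and then to upgrade this to the field-theoretic statement $b\in L'(c)$ required by Definition~\ref{def:qfint} via ``a standard argument absorbing the finitely many conjugates''. No such standard argument exists: absorbing conjugates only shows that the \emph{code} of the finite set of conjugates of $b$ over $L(c)$ is ($\C$-)internal, not that $b$ itself is, and in general almost internality does not imply internality, even over an enlarged independent base. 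Nothing in \cite[$\S$2.4]{qfint} asserts that almost internality is equivalent to qf-internality for rational types; what is shown there is only that qf-internality is a well-defined invariant of the quantifier-free type. Indeed, elsewhere in this paper (the proof of Theorem~\ref{thm:nonorth}) we are forced to work with Chatzidakis's \emph{almost} internality conclusion directly rather than upgrading it, precisely because no such upgrade is available.

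The paper's proof avoids this by not using the black-box statement of \cite[Theorem~1.2]{pillay-ziegler} at all: it extracts from the \emph{body} of the Pillay--Ziegler argument (page 591) the stronger assertion that the minimal field of definition $F$ of $\loc(a/K)$ satisfies the literal field containment $F\subseteq L(c)$ with $L\supseteq k(a)$ and $L\ind_{k(a)}F$. Combined with Lemma~\ref{lem:cbred}(b), which identifies $k(b)$ with $\langle k\cdot F\rangle$, this gives $b\in L(c)$ on the nose, so $L$ and $c$ witness qf-internality directly --- this is exactly why the Remark following the Fact says that rationality of $p$ is what yields outright qf-internality rather than mere almost internality. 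To repair your proof you would need either to carry out this extraction from the Pillay--Ziegler proof, or to supply an actual argument (which I do not believe exists in the generality you claim) that almost $\C$-internality of a rational type implies its qf-internality. Your remaining bookkeeping --- identifying the various notions of canonical base and checking that $\qftp(b/k(a))$ is rational --- is fine, though the rationality of $\qftp(b/k)$ follows immediately from $k(b)=k\langle b\rangle$ without the detour through perfect closures.
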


\begin{proof}
We spell out how this follows from~\cite[Theorem~1.2]{pillay-ziegler}.
What is actually shown there (see the proof on page~591) is the following: if $F$ is the minimal field of definition of $\loc(a/K)$ then $F\subseteq L(c)$ where $c$ is a tuple from $\C$ and $L\supseteq k(a)$ is a difference field extension such that $L\ind_{k(a)}F$.
Now, by Lemma~\ref{lem:cbred}, $k(b)=\langle k\cdot F\rangle$, so that $b\in L(c)$ and $L\ind_{k(a)}b$.
That is, $L$ and $c$ witness the qf-internality of $\qftp(b/k)$ to~$\C$.
(Note that $\qftp(b/k)$ is rational).
\end{proof}

\begin{remark}
In~\cite{pillay-ziegler} the conclusion of the CBP is just ``almost internality".
We get outright qf-internality here because we start with a rational type.
\end{remark}

\bigskip
\section{Minimality}
\label{sect:qfmin}

\noindent
We continue to work over an algebraically closed inversive difference field $(k,\sigma)$.
It is somewhat surprising (to us) that the following fundamental question (asked already in a stronger form in~\cite[Remark~1.15]{acfa2}) remains open:

\begin{question}
\label{question:qfmin=min}
Is $SU$-rank, even $SU$-rank being~$1$, a property of the quantifier-free type?
That is, given a pair of tuples $a,a'$ such that $\qftp(a/k)=\qftp(a'/k)$, is it the case that $\su(a/k)=1$ if and only if $\su(a'/k)=1$?
\end{question}

What makes an affirmative answer to this question plausible is the following algebraic-dynamics characterisation of having $SU$-rank greater than~$1$.

\begin{proposition}
\label{prop:nonmin-geo}
Suppose~\(p\) is is the generic quantifier-free type of a rational $\sigma$-variety $(V,\phi)$ over~$k$.
Then the following are equivalent:
\begin{itemize}
\item[(i)]
For some $a\models p$, \(\su(a/k)>1\).
\item[(ii)]
There exists a rational $\sigma$-variety $(W,\psi)$ over~$k$, and a proper irreducible invariant subvariety~$Z$ of $(V\times W,\phi\times\psi)$ that projects dominantly onto both~$V$ and~$W$, with $\dim Z>\dim W$.
\end{itemize}
\end{proposition}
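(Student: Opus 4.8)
The plan is to translate both conditions into the model theory of $\acfa_0$ via the dictionary of Section~\ref{prelims}, and then run the standard argument relating ``$\su$-rank $>1$'' to the existence of a nontrivial canonical base of a forking extension. Throughout I use that, since $p$ is rational, for any difference field extension $L\supseteq k$ and any realisation $a\models p$ one has $\acl(ka)=k(a)^{\alg}$ and $a\ind_k L$ iff $\trdeg(k(a)(L)/L)=\dim V$; concretely, whenever $e$ is a tuple with $k\langle e\rangle=k(e)$, then $a\nind_k e$ iff $\dim\loc(a,e/k)<\dim V+\dim\loc(e/k)$, while $a\notin\acl(ke)$ iff $\dim\loc(a,e/k)>\dim\loc(e/k)$.

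\emph{$(ii)\Rightarrow(i)$.} Given $(W,\psi)$ and $Z$ as in~(ii), let $(a,e)$ be a generic point of the rational $\sigma$-variety $(Z,\phi\times\psi)$. Since $Z$ is invariant and projects dominantly onto $V$, the point $a$ is Zariski generic in $V$ and satisfies $\sigma(a)=\phi(a)$, so $a\models p$. As $Z\subsetneq V\times W$ we get $\dim Z<\dim V+\dim W$, hence $a\nind_k e$; as $\dim Z>\dim W=\trdeg(k(e)/k)$ we get $\trdeg(k(a,e)/k(e))>0$, hence $a\notin\acl(ke)$. Thus $\qftp(a/\acl(ke))$ is a nonalgebraic forking extension of $\qftp(a/k)$, and so $\su(a/k)\geq 2$ by the recursive characterisation of $\su$-rank.

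\emph{$(i)\Rightarrow(ii)$.} Fix $a\models p$ with $\su(a/k)>1$. By the recursive characterisation of $\su$-rank there is an algebraically closed inversive difference field $L\supseteq k$ with $\qftp(a/L)$ forking over $k$ and $a\notin\acl(L)=L$. Apply Lemma~\ref{lem:cbred}(a) with $K=L$ to get a finite tuple $e$ with $k\cdot\cb(a/L)=k(e)=k\langle e\rangle$; note $e\in L$, the minimal field of definition of $\loc(a,\sigma(a)/L)$ being contained in $L$. Then $\qftp(e/k)$ is rational, so it is the generic quantifier-free type of a rational $\sigma$-variety $(W,\psi)$ with $W=\loc(e/k)$. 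Put $Z:=\loc(a,e/k)\subseteq V\times W$. Since $(a,e)\in(V\times W,\phi\times\psi)^\sharp$, the irreducible subvariety $Z$ is invariant, and it projects dominantly onto both $V$ and $W$ because $a,e$ are generic in $V,W$ respectively. Finally: $\qftp(a/L)$ does not fork over the difference subfield $k(e)=k\cdot\cb(a/L)$ of $L$ but does fork over $k$, hence $a\nind_k e$, i.e.\ $Z\subsetneq V\times W$; and $\acl(ke)=k(e)^{\alg}\subseteq L$ with $a\notin L$, so $a\notin\acl(ke)$, giving $\trdeg(k(a,e)/k(e))\geq 1$ and thus $\dim Z\geq\dim W+1>\dim W$. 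This is exactly the data in~(ii).

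The main obstacle is the direction $(i)\Rightarrow(ii)$: the only plausible source of the rational $\sigma$-variety $(W,\psi)$ is a canonical base of a forking extension of $p$, and two points need care. First, that canonical base must be coordinatised by a tuple $e$ whose quantifier-free type is genuinely rational, so that $(W,\psi)$ is an honest rational $\sigma$-variety --- this is precisely Lemma~\ref{lem:cbred}(a) (together with the fact that $k\cdot\cb(a/L)$, being a compositum of difference subfields of $L$, is itself a difference field). Second, one needs $a\nind_k e$, i.e.\ $Z$ a \emph{proper} subvariety; this is the usual fact, transported to the quantifier-free/rational setting, that a forking extension already forks over the base of its canonical base. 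Once these are in hand, the dimension inequality $\dim Z>\dim W$ is immediate from $a\notin\acl(ke)$, which was arranged in the choice of $L$.
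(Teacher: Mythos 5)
Your proof is correct, and both directions rest on the same underlying mechanism as the paper's: translating independence and algebraicity into dimensions of loci, and extracting the family $(W,\psi)$ from a canonical base of a nonalgebraic forking extension. The $(ii)\Rightarrow(i)$ direction is essentially identical to the paper's. In $(i)\Rightarrow(ii)$ you diverge in one small but genuine way: you take $W=\loc(e/k)$ where $e$ coordinatises $k\cdot\cb(a/L)$ directly (via Lemma~\ref{lem:cbred}(a)), whereas the paper first replaces the canonical base by a tuple $b$ of independent realisations of $\qftp(a/L)$ with $\cb(a/L)\in k(b)$ (via \cite[Lemma~2.5(c)]{qfint}), chosen independent from $a$ over $L$, and sets $W=\loc(b/k)\subseteq V^d$. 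Your route is shorter and suffices for the proposition as stated; the paper's extra step buys the stronger conclusion that the witnessing family can always be based on an invariant subvariety of a cartesian power of $(V,\phi)$, which is exactly what is reused later for the degree-of-nonminimality results (Proposition~\ref{prop:nmdeg-geom} and the remark following Definition~\ref{def:wsimple}). All the delicate points in your argument check out: $e\in L$ and $\qftp(e/k)$ rational by Lemma~\ref{lem:cbred}(a), $a\nind_k e$ by transitivity from $a\ind_{e}L$, and $a\notin\acl(ke)$ since $\acl(ke)=k(e)^{\alg}\subseteq L$. The only cosmetic omission is that in $(ii)\Rightarrow(i)$ you should note, as the paper does, that a generic sharp point of $(Z,(\phi\times\psi)|_Z)$ exists because $Z$, being irreducible over the algebraically closed field $k$, is absolutely irreducible.
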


\begin{proof}
(i)$\implies$(ii).
Suppose~$\su(a/k)>1$ is witnessed by $K\supseteq k$ such that $a\notin\acl(K)$ but $a\nind_kK$.
We may assume that $K=\acl(K)$.
Let $e=\cb(a/K)\in K$.
Since $q:=\qftp(a/K)$ is rational, there is, by~\cite[Lemma~2.5(c)]{qfint}, 
some $d\geq 0$  such that $e\in k\langle b\rangle$ for any $b=(a_1,\dots,a_d)$ 
independent realisations of~$q$.
In particular $e\in k(b)$.
We may choose $b\ind_Ka$.
In particular, $a\notin\acl(kb)$.
On the other hand, as $a\ind_{e}K$ and $a\nind_kK$, we must have that 
$a\nind_ke$ and hence $a\nind_kb$.
That is, $k(b)$ is also a witness to $\su(a/k)>1$.
Now, let $W=\loc(b/k)\subseteq V^d$.
Since $b$ is a tuple of realisations of~$p$, it is a $\sharp$-point for the 
co-ordinatewise action of $\phi$ on $V^d$.
Hence $W$ is invariant for $\phi$, and if we let~$\psi$ be the restriction of $\phi$ to $W$ then $(W,\psi)$ is a rational $\sigma$-variety over $k$.
Let $Z:=\loc((a,b)/k)$, it is an irreducible invariant subvariety of $(V\times W,\phi\times\psi)$ that projects dominantly on $V$ and $W$.
That $a\nind_kb$ forces $Z$ to be a proper subvariety of $V\times W$.
On the other hand, that $a\notin\acl(kb)$ forces 
\(\dim{}Z>\dim{}W\).

(ii)$\implies$(i)
Suppose $(W,\psi)$ and $Z\subseteq V\times W$ are as given by~(ii).
Let $(a,b)\in Z$ be generic for the induced rational $\sigma$-variety structure on~$Z$ over~$k$.
(Such a generic point exists because we are working over an algebraically closed difference field, and so $Z$ is absolutely irreducible.)
By dominance of $Z\to V$, $a$ is Zariski generic in~$V$ over~$k$, and by dominance of $Z\to W$, $b$ is Zariski generic in~$W$ over~$k$.
Hence, as $Z\neq V\times W$, we have that $a\nind_kb$.
(Note that $V\times W$ is also irreducible as we are working over an algebraically 
closed field.)
On the other hand, $\dim Z>\dim W$ implies $a\notin{k(b)}^{\alg}$.
But by equivariance of $Z\to W$ we have that $b\in{(W,\psi)}^\sharp$, which 
implies that \({k(b)}^{\alg}={k\langle{}b\rangle}^{\alg}=\acl(kb)\).
This witnesses that $\su(a/k)>1$.
By equivariance of $Z\to V$ we have that $a\models p$, establishing~(i).
\end{proof}

In the wake of this, one might hope to try to answer Question~\ref{question:qfmin=min} affirmatively for rational types~$p$, by showing that if $Z\to W$ witnesses that some $\su(a/k)>1$, and $a'\models p$ is another realisation, then another fibre of the same family will witness $\su(a'/k)>1$.
The following example, already in the fixed field, says that approach cannot work:

\begin{example}
Working in characteristic~$0$, let $(V,\phi)=(\AA^2,\id)$, $(W,\psi)=(\AA^1,\id)$, and $Z\subseteq V\times W$ be defined by $w^2+wv_1+v_2=0$, where $(v_1,v_2)$ are co-ordinates for $V$ and~$w$ a co-ordinate for~$W$.
Note that every irreducible variety of $V\times W$ is invariant as the dynamics here are trivial.
It is clear that $Z$ projects dominantly onto both~$V$ and~$W$, and that $\dim Z>\dim W$.
So, by Proposition~\ref{prop:nonmin-geo}, $Z$ witnesses that the generic quantifier-free type $p$ of $(V,\phi)$ has a completion of $\su$-rank greater than~$1$.
More precisely, if we choose $(a,b)\in (Z,\id)^\sharp$ generic in~$Z$ then the fibre $Z_b\subsetneq V$ gives rise to a nonalgebraic forking extension of $\tp(a/k)$, namely $\tp(a/kb)$.
But suppose, instead, that we choose $a=(a_1,a_2)\models p$ such that
\begin{equation*}
\sigma\left(\sqrt{a_1^2-4a_2}\right)=-\sqrt{a_1^2-4a_2}.
\end{equation*}
Then $a$ lives on two fibres of~$Z$, namely $Z_b$ and $Z_{b'}$ where $b, b'$ are the two solutions to $w^2+wa_1+a_2=0$.
The above identity forces $\sigma(b,b')=(b',b)$.
In particular, $b,b'\notin(W,\psi)^\sharp$.
But what matters for us is that $k\langle b\rangle=k(b,b')$ and $k\langle b'\rangle=k(b,b')$.
Since $a\in k(b,b')^{\alg}$, we get that $a\in\acl(kb)$ and $a\in\acl(kb')$; in other words $\tp(a/kb)$ and $\tp(a/kb')$ are algebraic extensions of $\tp(a/k)$ and hence neither witnesses that $\su(a/k)>1$.
What we have shown is that none of the fibres of $Z\to W$ on which $a$ lives can witness that $\su(a/k)>1$.
One would have to (and can in this case) choose~$Z$ differently to find such a witness.
\end{example}

In any case, we avoid answering Question~\ref{question:qfmin=min} by restricting attention to those types all of whose realisations have $SU$-rank~$1$.

\begin{definition}
A complete quantifier-free type $p\in S_{\qf}(k)$ is \emph{minimal} if $\su(p)=1$ as a partial type. That is, if $\su(a/k)=1$ for all $a\models p$.
\end{definition}

One easily  obtains the expected characterisations:

\begin{proposition}
\label{prop:min-equiv}
Suppose~\(p\) is a nonalgebraic complete quantifier-free type over~$k$.
Then the following are equivalent:
\begin{itemize}
\item[(i)]
$p$ is minimal.
\item[(ii)]
Every forking extension of~$p$ is algebraic.
\item[(iii)]
Every relatively quantifier-free definable subset of~$p(\U)$ is finite or cofinite.
 \end{itemize}
\end{proposition}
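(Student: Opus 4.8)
The plan is to establish (i)$\Leftrightarrow$(ii) directly from the recursive definition of $SU$-rank, and then to close the equivalences by proving (iii)$\Rightarrow$(ii) and (ii)$\Rightarrow$(iii). For (i)$\Leftrightarrow$(ii) I would simply unwind definitions: for $a\models p$, $\su(a/k)\geq 2$ holds exactly when $\qftp(a/k)$ has a nonalgebraic forking extension, so --- $p$ being nonalgebraic --- $\su(a/k)=1$ is equivalent to every forking extension of $\qftp(a/k)$ being algebraic. Since a forking extension of $p$ is precisely a forking extension of $\qftp(a/k)$ for some (equivalently any) $a\models p$, condition (ii) is exactly the assertion that this holds for all realisations of $p$, i.e.\ (i). I would add a parenthetical remark that (i) is stated as a condition on all realisations because, as Question~\ref{question:qfmin=min} records, it is open whether $SU$-rank is determined by the quantifier-free type.

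For (iii)$\Rightarrow$(ii) I would argue by contradiction from a nonalgebraic complete type $q$ over some $K\supseteq k$ that extends $p$ and forks over $k$. A forking formula in $q$ implies a finite disjunction $\bigvee_j\chi_j(x,e_j)$ of formulas each dividing over $k$; as $q$'s infinitely many realisations all lie in $p(\U)$, one set $\chi_{j_0}(\U,e_{j_0})\cap p(\U)$ is infinite, hence cofinite in $p(\U)$ by (iii), with complement of some finite size $M$. Fixing a $k$-indiscernible sequence $(e^{(i)})_{i<\omega}$ with $e^{(0)}=e_{j_0}$ that witnesses $N$-inconsistency of $\{\chi_{j_0}(x,e^{(i)})\}$, each $p(\U)\setminus\chi_{j_0}(\U,e^{(i)})$ also has size $M$ (apply a $k$-automorphism), so $p(\U)\cap\bigcap_{i<N}\chi_{j_0}(\U,e^{(i)})$ is the complement, inside the infinite set $p(\U)$, of a union of $N$ sets of size $M$ --- nonempty, contradicting $N$-inconsistency.

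For (ii)$\Rightarrow$(iii), where I may also assume (i): given an infinite relatively quantifier-free definable $X=\psi(\U,c)\cap p(\U)$, I want $p(\U)\setminus X$ to be finite. I would take $K$ to be the algebraic closure of the inversive difference field generated by $k$ and $c$ --- a small algebraically closed inversive difference field containing the parameters of $\psi$ --- and use compactness to find $a\in X$ with $\qftp(a/K)$ nonalgebraic (realise $p\cup\{\psi(x,c)\}$ off the small set $\acl(K)$). Then $\su(a/K)\geq 1$, while $\su(a/k)=1$ by (i), and by monotonicity $\su(a/K)=\su(a/k)$; this rank being finite forces $a\ind_k K$, so $\qftp(a/K)$ is a nonforking extension of $p$, and by stationarity of quantifier-free types over algebraically closed inversive difference fields --- the canonical amalgamation of difference fields underlying the well-definedness of $p^{(d)}$ in the preliminaries --- it is the unique such extension, say $p'$, which therefore contains $\psi(x,c)$. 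Were $p(\U)\setminus X$ infinite, the same argument applied to $\neg\psi(x,c)$ would put $\neg\psi(x,c)$ into $p'$ as well, a contradiction; hence $p(\U)\setminus X$ is finite and $X$ is cofinite. The step needing genuine care is this last one, since it rests on that stationarity/amalgamation fact and on passing to the parameter-generated field $K$ rather than working over $kc$ directly; the first two implications are routine unwinding of the definitions of $SU$-rank and of dividing.
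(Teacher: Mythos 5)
Your proposal is essentially correct, and two of its three implications coincide with the paper's. The equivalence (i)$\Leftrightarrow$(ii) is the same unwinding of the definition of $\su$-rank that the paper performs (the paper only writes out (i)$\implies$(ii), the converse being equally immediate), and your (ii)$\implies$(iii) is the paper's argument in direct rather than contrapositive form: both rest on compactness to produce a nonalgebraic extension of~$p$ over the parameter field $K$, and on stationarity of quantifier-free types over algebraically closed inversive difference fields to force that extension to be the unique nonforking one, so that $\psi$ and $\neg\psi$ cannot both cut out infinite pieces. Your detour through $\su$-ranks there is harmless but unnecessary: (ii) alone already tells you that a nonalgebraic extension must be nonforking.

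Where you genuinely diverge is in closing the cycle. The paper proves (iii)$\implies$(i): from a nonalgebraic forking extension $q_1$ of $p$ over $K$ it takes the (nonalgebraic) nonforking extension $q_2$, observes $q_1\neq q_2$, and lets a quantifier-free formula separating them split $p(\U)$ into two infinite pieces --- stationarity does all the work and no dividing analysis is needed. You instead prove (iii)$\implies$(ii) via dividing, indiscernible sequences, and a counting argument against $N$-inconsistency. That argument is correct and rather pleasant, but as written it has one gap: in the disjunction $\bigvee_j\chi_j(x,e_j)$ implied by the forking formula, the dividing formulas $\chi_j$ are not guaranteed to be quantifier-free, and (iii) says nothing about $\chi_{j_0}(\U,e_{j_0})\cap p(\U)$ unless $\chi_{j_0}$ is quantifier-free. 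The repair is to invoke Kim's theorem --- $\acfa$ is simple, so a formula forks over $k$ if and only if it divides over $k$ --- and run your indiscernibility/counting argument directly on the quantifier-free forking formula from $q$ itself, dispensing with the disjunction altogether. With that correction your route works; it is heavier machinery than the paper's, which gets by on stationarity alone.
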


\begin{proof}
(i)$\implies$(ii).
Suppose $a\models p$ and $B\supseteq k$ is such that $a\nind_kK$.
Then $\su(a/B)<\su(a/k)=1$, so that $a\in\acl(B)$. 

(ii)$\implies$(iii).
Suppose $D$ is a quantifier-free definable set over some difference field 
extension $K\supseteq k$.
If $p(\U)\cap D$ and $p(\U)\setminus D$ are both infinite, then we can find, by compactness, nonalgebraic extensions $q_1,q_2\in S_{qf}(K)$ of $p$ with $q_1$ containing ``$x\in D$" and $q_2$ containing ``$x\notin D$".
As~$p$ is stationary, these cannot both be nonforking extensions.
Hence~$p$ has a nonalgebraic forking extension to $S_{\qf}(K)$.

(iii)$\implies$(i).
If $p$ is not minimal witnessed by $K\supseteq k$ and $a\models p$ with $a\nind_kK$ and $\su(a/K)>0$.
So $q_1:=\qftp(a/K)$ is a nonalgebraic extension of~$p$.
Let $q_2\in S_{\qf}(K)$ be the nonforking extension of~$p$ -- it is also nonalgebraic.
As $q_1\neq q_2$, since one is a forking extension and one not, there is a quantifier-free formula in $q_1$ whose negation is in $q_2$.
The set $D$ defined by this formula will have the property that $D\cap p(\U)\supseteq q_1(\U)$ and $p(\U)\setminus D\supseteq q_2(\U)$ are both inifnite.
\end{proof}

For rational types, Proposition~\ref{prop:nonmin-geo} gives a geometric characterisation of minimality.
Borrowing from terminology in bimeromorphic goemetry, we define:

\begin{definition}
\label{def:simple}
A rational $\sigma$-variety $(V,\phi)$ over~$k$ is {\em simple} if for any rational $\sigma$-variety  $(W,\psi)$ over~$k$, and~$Z$ an irreducible invariant subvariety of $(V\times W,\phi\times\psi)$ that projects dominantly onto both~$V$ and~$W$, either $Z=V\times W$ or  $\dim Z=\dim W$.
\end{definition}

Proposition~\ref{prop:nonmin-geo} can be restated as:

\begin{proposition}
\label{prop:min-simple}
Suppose $(V,\phi)$ is a positive-dimensional  rational $\sigma$-variety over~$k$ with generic quantifier-free type~$p$.
Then $p$ is minimal if and only if $(V,\phi)$ is simple.
\end{proposition}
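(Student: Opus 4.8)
The plan is to read the equivalence off directly from Proposition~\ref{prop:nonmin-geo}, using positive-dimensionality of $(V,\phi)$ to dispose of the degenerate low-rank case. First I would note that since $\dim V>0$ the type $p$ is nonalgebraic, so $\su(a/k)\geq 1$ for every $a\models p$. Unwinding the definition of minimality, it follows that $p$ fails to be minimal exactly when $\su(a/k)>1$ for some realisation $a\models p$ — that is, exactly when clause~(i) of Proposition~\ref{prop:nonmin-geo} holds.

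Next I would match clause~(ii) of that proposition against the negation of simplicity. Simplicity of $(V,\phi)$ asserts that for every rational $\sigma$-variety $(W,\psi)$ over~$k$ and every irreducible invariant subvariety~$Z$ of $(V\times W,\phi\times\psi)$ projecting dominantly onto both~$V$ and~$W$, either $Z=V\times W$ or $\dim Z=\dim W$. Since dominance of the projection $Z\to W$ forces $\dim Z\geq\dim W$, the condition ``$\dim Z\neq\dim W$'' is the same as ``$\dim Z>\dim W$'', and ``$Z\neq V\times W$'' is the same as ``$Z$ is a proper subvariety of $V\times W$''. Hence the failure of simplicity is precisely the existence of a rational $\sigma$-variety $(W,\psi)$ over~$k$ together with a proper irreducible invariant subvariety $Z\subseteq V\times W$ projecting dominantly onto both factors with $\dim Z>\dim W$ — which is verbatim clause~(ii).

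Combining the two observations, $p$ is not minimal if and only if (i) holds if and only if (ii) holds if and only if $(V,\phi)$ is not simple, which is the assertion. The positive-dimensionality hypothesis is used exactly to exclude the case where~$V$ is a point: such a $(V,\phi)$ is vacuously simple (any $Z$ as above satisfies $\dim Z=\dim W$) but its generic quantifier-free type is algebraic and hence not minimal.

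I do not expect any genuine obstacle here: the substantive content is entirely contained in Proposition~\ref{prop:nonmin-geo}, and what remains is the bookkeeping of the previous paragraph together with the elementary remark that $\dim Z\geq\dim W$ for a dominant projection. The only point requiring a little care is the passage between ``$\su(a/k)=1$ for all $a\models p$'' (the definition of minimality) and ``$\su(a/k)>1$ for some $a\models p$'' (clause~(i)), which is where nonalgebraicity of~$p$ — equivalently, $\dim V>0$ — is invoked.
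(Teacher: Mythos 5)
Your proposal is correct and is essentially the paper's own argument: the paper states Proposition~\ref{prop:min-simple} as a direct restatement of Proposition~\ref{prop:nonmin-geo}, and your bookkeeping (nonalgebraicity from $\dim V>0$ to identify non-minimality with clause~(i), and $\dim Z\geq\dim W$ from dominance to identify non-simplicity with clause~(ii)) is exactly the translation being left implicit there. No issues.
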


Simplicity says that $(V,\phi)$ is not covered by an invariant  family of proper infinite subvarieties.
In particular, rational $\sigma$-curves are always simple.
Here is a higher dimensional example:

\begin{example}
\label{example-sav}
Let $A$ be a simple abelian variety over an algebraically closed field~$k\subseteq\C$, with $\phi\in\endo(A)$ not of finite order.
Then $(A,\phi)$ is simple.
\end{example}

\begin{proof}
This follows from the work in~\cite{mm}, but we show how.

Let $B:=(A,\phi)^\sharp(\U)$.
This is a quantifier-free definable subgroup of $A(\U)$.

We first show that $B$ is \emph{c-minimal} in the sense of~\cite{mm}; that is, it does not admit any infinite definable subgroups of infinite index.
Let $R$ be the (noncommutative) ring obtained from the ring of definable endomorphisms of $A(\U)$ after tensoring with $\QQ$ over $\ZZ$.
By~\cite[Proposition 4.1.1(5)]{mm}, c-minimality will follow if we can show that $\sigma-\phi$ is left-irreducible in~$R$.
But~\cite[Proposition 4.1.1(3)]{mm}, specialised to our autonomous case, tells us that $R=\endo_{\QQ}(A)[\sigma,\sigma^{-1}]$, namely that it is generated as a ring by $\{\sigma,\sigma^{-1}\}$ over the division ring $\endo_{\QQ}(A)$.
It now follows easily that $\sigma-\phi$, being monic of degree~$1$ in $\sigma$, is left-irreducible in $R$.

Next, we argue that $B$ is \emph{LMS}, again in the sense of~\cite{mm}; namely that every definable subset of every cartesian power is a finite boolean combination of cosets of definable subgroups.
By~\cite[Proposition 4.1.2(c)]{mm}, LMS will follow once we know that $B$ is not contained in the kernel of $\sigma^N-1$, for any $N>0$.
This is indeed the case from our assumption that $\phi$ is not a root of unity;
$B$ will contain a Zariski generic point of~$A$, and if $\phi^N$ fixed such a point it would be identically trivial.

Now we can apply~\cite[Proposition 2.1.2(a)]{mm} which says that if $B$ is c-minimal and LMS then it is of $SU$-rank~$1$.
It follows that the generic quantifier-free type of $(A,\phi)$, say~$p$, as it extends~$B$, is minimal.
Simplicity of $(A,\phi)$ now follows from Proposition~\ref{prop:min-simple}.
\end{proof}

Let us point out that in the nonorthogonal to~$\C$ case, minimality is nothing more than one-dimensionality.

\begin{lemma}
\label{lem:aim-dim1}
Suppose~\(p\) is a nonalgebraic rational stationary type that is nonorthogonal to~$\C$.
Then $p$ is minimal if and only if $\dim(p)=1$.
\end{lemma}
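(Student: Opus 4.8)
The plan is to treat the two implications separately; the ``if'' direction is soft and uses neither stationarity nor nonorthogonality, while the ``only if'' direction is where the failure of $p\perp\C$ does its work.

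For ``if'', present $p$ as the generic quantifier-free type of a rational $\sigma$-variety $(V,\phi)$ with $\dim V=1$ (every rational type arises this way). Then $(V,\phi)$ is automatically simple: for any rational $\sigma$-variety $(W,\psi)$ and any proper irreducible invariant subvariety $Z\subseteq V\times W$ projecting dominantly onto both factors, $\dim Z<\dim V+\dim W=1+\dim W$ forces $\dim Z\le\dim W$, while dominance of $Z\to W$ forces $\dim Z\ge\dim W$; so $\dim Z=\dim W$. Minimality of $p$ then follows from Proposition~\ref{prop:min-simple}.

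For ``only if'', assume $p$ is minimal. Since $p\not\perp\C$, the results of~\cite{qfint} recalled in~\S\ref{subsect:nonorth} provide a rational map $g\colon p\to q$ onto a positive-dimensional rational type $q$ that is qf-internal to~$\C$. The first step is to see that $g$ is finite-to-one: fixing $a_0\models p$ and putting $a:=g(a_0)\in\dcl(ka_0)$, positive-dimensionality of $q$ gives $\su(a/k)\ge 1$, while $\su(a/k)\le\su(a_0/k)=1$ by minimality; the Lascar inequality for the pair $(a_0,a)$ (whose $\su$-rank over $k$ equals $\su(a_0/k)=1$) then forces $\su(a_0/ka)=0$, i.e.\ $a_0\in\acl(ka)$, and also $\su(a/k)=1$. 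Thus $\dim(p)=\dim(q)$, and it suffices to show $\dim(q)=1$.

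The main step exploits qf-internality, and here I would work in characteristic zero: by~\cite[Proposition~3.6]{qfint} the rational $\sigma$-variety $(W,\psi)$ with generic type $q$ is isotrivial, witnessed by an equivariant embedding of $(W\times W',\psi\times\psi')$ into $(\AA^\ell\times W',\id\times\psi')$ over $(W',\psi')$. Choosing a generic point $b$ of $(W',\psi')$ with $b\ind_k a$ and setting $K:=\acl(kb)$, the embedding carries the generic point $(a,b)$ of $(W\times W',\psi\times\psi')$ to a point $(\bar a,b)$ whose first coordinate $\bar a$ lies in the fixed field (being a $\sharp$-point for $\id$), with $a$ and $\bar a$ interalgebraic over $K$ and $a\ind_k K$. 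Hence $\su(\bar a/K)=\su(a/K)=\su(a/k)=1$ while $\trdeg(K(\bar a)/K)=\trdeg(K(a)/K)=\dim(q)$. Finally, since $\C$ is a stably embedded pure field of $\su$-rank one~\cite{acfa} and $\bar a$ is a tuple from $\C$, transcendence degree over $K$ and $\su$-rank over $K$ coincide for $\bar a$: concretely, a transcendence basis of $K(\bar a)/K$ may be taken among the coordinates of $\bar a$, and each of its entries, being transcendental over $K$ together with the preceding ones, contributes exactly $1$ to the $\su$-rank. Therefore $\dim(q)=1$ and hence $\dim(p)=1$. I expect this last identification of transcendence degree with $\su$-rank to be the crux of the argument: the two differ for rational types in general --- e.g.\ a simple abelian variety with a nontorsion endomorphism has $\su$-rank one in dimension $>1$, cf.\ Example~\ref{example-sav} --- and it is precisely the reduction via isotriviality to tuples from the fixed field that forces equality here.
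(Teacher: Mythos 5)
Your argument is correct in characteristic zero, but for the ``only if'' direction it takes a substantially heavier route than the paper, and as written it does not cover positive characteristic. The paper's proof is a short direct computation: unwinding $p\not\perp\C$ gives $a\models p$, an extension $K\supseteq k$ with $a\ind_kK$, and a tuple $(c_1,\dots,c_m)$ from~$\C$ with $a\nind_K(c_1,\dots,c_m)$; minimality gives $a\in\acl(Kc_1\dots c_m)$, and taking $m$ least and $L:=K(c_1,\dots,c_{m-1})$, minimality again gives $a\ind_kL$, whence $\dim(p)=\trdeg(L(a)/L)\leq\trdeg(L(c_m)/L)\leq 1$ because $c_m$ is a single fixed-field element. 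No internality, no isotriviality, and no hypothesis on the characteristic. Your route instead invokes \cite[Propositions~2.11 and~3.6]{qfint} to produce a finite-to-one map onto a qf-internal type and then trivialises it so as to land on a tuple from~$\C$; this mirrors the computation at the end of the paper's proof of Theorem~\ref{thm:zd}, where the identity $\trdeg(c/L)=\su(c/L)$ for fixed-field tuples plays exactly the role you assign it. The individual steps do check out: the Lascar-inequality argument for finite-to-oneness, the passage to $\bar a$ in the fixed field via the isotriviality embedding evaluated at a generic base point $b\ind_ka$, and the comparison of transcendence degree with $\su$-rank for tuples from~$\C$ are all sound. Your ``if'' direction, via simplicity of $\sigma$-curves and Proposition~\ref{prop:min-simple}, is also fine (the paper itself records that rational $\sigma$-curves are simple), though the paper's own route is through Proposition~\ref{prop:min-equiv}(ii).

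The one genuine shortfall is the characteristic restriction that you flag yourself. Lemma~\ref{lem:aim-dim1} sits in Section~\ref{sect:qfmin}, where positive characteristic is still allowed (the blanket characteristic-zero hypothesis is only imposed from Section~\ref{sec:zd} onward), and \cite[Proposition~3.6]{qfint} --- the equivalence of qf-internality with isotriviality --- is only available in characteristic zero. So your proof establishes a weaker statement than the one in the paper. To recover the full statement one should argue as the paper does, directly from a witness to non-weak-orthogonality after a nonforking base change, rather than through the internality--isotriviality machinery.
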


\begin{proof}
That one-dimensional types are minimal follows immediately from the characterisation of minimality given by~\ref{prop:min-equiv}(ii).
Suppose, now, that $p\in S_{\qf}(k)$ is minimal and nonorthogonal to~$\C$.
We therefore have $a\models p$, $K\supseteq k$ with $a\ind_kK$, and a tuple~$(c_1,\dots,c_m)$ from~$\C$ such that $a\nind_K(c_1,\dots,c_m)$.
By minimality, it follows that $a\in\acl(Kc_1\dots c_m)$.
Let~$m$ be least such.
So $a\notin\acl(Kc_1\dots c_{m-1})$.
If we let $L:=K(c_1,\dots, c_{m-1})$, then minimality gives $a\ind _kL$.
Hence
\begin{eqnarray*}
\dim(p)
&=&
\dim(a/L)\ \ \text{ as } a\ind _kL\\
&=&
\trdeg(L(a)/L)\ \ \text{ as $\qftp(a/L)$ is rational}\\
&\leq&
\trdeg(L(c_m)/L)\ \ \text{ as } a\in\acl(Lc_m)\\
&\leq&
1
\end{eqnarray*}
By nonalgebraicity, it follows that $\dim(p)=1$.
\end{proof}

\bigskip
\section{One-basedness and the Zilber dichotomy}
\label{sec:zd}

\noindent
We continue to work over an algebraically closed inversive difference field $(k,\sigma)$ which we now, and for the rest of the paper, assume to be of characteristic zero.

Chatzidakis and Hrushovski~\cite{acfa} introduce a notion of modularity for sets in $\acfa$ that we here rephrase in terms of the familiar stability-theoretic notion of one-basedness, and we  specialise to rational types:

\begin{definition}
Suppose $p\in S_{\qf}(k)$ is a rational type.
We will say that~$p$ is \emph{one-based} if for any finite tuple of 
realisations~$a$ of~$p$, and any difference field extension $K\supseteq k$, 
$\cb(a/K)\in \acl(ka)$.
\end{definition}

\begin{remark}\label{rem:modular}
  This property coincides with ``modularity of $p(\U)$ over~$k$", in the sense 
  of~\cite[\S3.3]{acfa}.
  This is the condition that asks that~\(a\) be independent from~\(b\) over \(\acl(ka)\cap\acl(kb)\), for any pair of finite tuples of realisations~$a$ and~$b$ of~$p$.
    Indeed, if \(p\) is one-based, and \(a, b\) 
  are tuples of realisations of \(p\), then by assumption \(\cb(a/kb)\) is 
  contained in \(\acl(ka)\).
  Of course, it is also contained in 
  \(\acl(kb)\), from which it follows that \(a\) is independent from 
  \(b\) over \(\acl(ka)\cap\acl(kb)\).
  Conversely, suppose \(p(\U)\) is modular over~$k$, and let \(a\) be a tuple of realisations, and 
  \(K\) a difference field extending \(k\).
  By Lemma~\ref{lem:cbred}, 
  \(k\cdot\cb(a/K)=k(b)\) for some tuple~\(b\) of realisations of~\(p\).  
  Modularity tells us that~$a$ is independent from~$b$ over $\acl(ka)\cap\acl(kb)$,
  so that
  \(\cb(a/K)=\cb(a/k(b))\subseteq\acl(ka)\cap\acl(kb)\subseteq\acl(ka)\).
  as desired.
\end{remark}

To give a geometric characterisation of this property, we begin with some terminology around families of invariant subvarieties of a 
rational $\sigma$-variety.

\begin{definition}\label{def:rich}
Suppose $(V,\phi)$ is a rational $\sigma$-variety over~$(k,\sigma)$.
By a \emph{rich invariant family of subvarieties of $(V,\phi)$} we mean a rational $\sigma$-variety $(W,\psi)$ over~$k$ along with an irreducible invariant subvariety $Z\subseteq(V\times W,\phi\times\psi)$ such that:
\begin{itemize}
\item[(i)]
$Z$ projects dominantly onto both $V$ and $W$, with the general fibres of $Z\to W$ being absolutely irreducible subvarieties of~$V$,
\item[(ii)]
$\dim Z>\dim V$,
\item[(iii)]
\emph{Canonicity}: if $b\neq b'$ are general points of~$W$ then $Z_b\neq Z_{b'}$.\end{itemize}
\end{definition}

\begin{remark}
The general fibres of a rich family are infinite proper subvarieties of~$V$.
Indeed, canonicity forces them to be proper.
And if they were finite then they would be singletons (by absolute irreducibility), and together with canonicity this would mean that $Z$ determines a birational equivalence between~$V$ and~$W$, contradicting $\dim Z>\dim V$.
\end{remark}

The connection to one-basedness arises as follows:

\begin{lemma}\label{lem:richcb}
Suppose $Z\subseteq(V\times W,\phi\times\psi)$ is a rich invariant family of subvarieties of a rational $\sigma$-variety $(V,\phi)$ over~$k$.
Let $(a,b)$ be a generic point of $(Z,(\phi\times\psi)|_Z)$ over~$k$.
Then $\cb(a/k(b))\notin {k(a)}^{\alg}$.
\end{lemma}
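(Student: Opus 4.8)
The plan is to show the contrapositive consequence directly: if $(a,b)$ is a generic point of the rich family $Z$, then $b$ carries genuine information about $a$ that is not already algebraic over $k(a)$, and this information is exactly captured by the canonical base of $\qftp(a/k(b))$. First I would record what the canonical base is concretely: since $p=\qftp(a/k)$ is rational, so is $\qftp(a/k(b))$, and by Lemma~\ref{lem:cbred} (applied with $K=k(b)^{\alg}$, or directly by \cite[Lemma~2.5(b)]{qfint}) the difference field $k(b)\cdot\cb(a/k(b))$ is the difference field generated over $k$ by the minimal field of definition of $\loc(a,\sigma(a)/k(b))$. The strategy is to relate this locus to the fibre $Z_b$ and then use the canonicity hypothesis (iii) to pin down $b$ (up to finitely many choices) from that minimal field of definition, so that $\cb(a/k(b))$ ``contains'' $b$ in the appropriate sense.

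The key steps, in order: (1) Observe that, by equivariance of $Z\to W$, $b\in(W,\psi)^\sharp$, so $\sigma(b)=\psi(b)$ is rational over $k(b)$; hence $k(b)$ is already an inversive difference field (equal to $k\langle b\rangle$) and $\acl(kb)=k(b)^{\alg}$. This means the canonical base computation takes place over the well-behaved base $k(b)$. (2) Note that $\loc(a/k(b))=Z_b$, the generic fibre, which is absolutely irreducible by (i). (3) By (ii), $\dim Z_b=\dim Z-\dim W>\dim V-\dim W$; combined with the dominance of $Z\to W$ this gives $\dim Z_b<\dim V$ if $\dim W>0$ — wait, that is not what I want; rather I want that $a$ is \emph{not} algebraic over $k(b)$ is false — in fact $a\in\acl(kb)$ can happen. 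The real point is different: canonicity says the map $b\mapsto Z_b$ is generically injective, so $b\in\dcl$ of any field defining $Z_b$ over $k$; in particular $b$ lies in the perfect (indeed algebraic) closure of $k$ together with the minimal field of definition $F_0$ of $\loc(a/k(b))=Z_b$. (4) Now $F_0\subseteq F$, the minimal field of definition of $\loc(a,\sigma(a)/k(b))$ (projection of a graph onto its source), so $b\in\big(k\cdot F\big)^{\alg}$, hence $b\in\big(k\cdot\cb(a/k(b))\big)^{\alg}=\acl\big(k,\cb(a/k(b))\big)$. (5) Finally, if we had $\cb(a/k(b))\in k(a)^{\alg}$, then combining with step (4) we would get $b\in\acl(ka)$; but genericity of $(a,b)$ on $Z$ together with $\dim Z>\dim V$ forces $\trdeg(k(a,b)/k(a))=\dim Z-\dim V>0$, so $b\notin\acl(ka)$ — contradiction. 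Therefore $\cb(a/k(b))\notin k(a)^{\alg}$.

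The main obstacle I expect is Step (4)/(5)'s bookkeeping about \emph{which} field the canonicity hypothesis lets us recover $b$ from. Canonicity as stated in Definition~\ref{def:rich}(iii) is about distinct \emph{general points} $b\neq b'$ giving distinct fibres $Z_b\neq Z_{b'}$; I need to upgrade this to the statement that $b$ is algebraic (better: definable) over $k$ together with the minimal field of definition of the \emph{generic} fibre $Z_b=\loc(a/k(b))$. This is a standard descent/elimination-of-imaginaries-style argument — the canonical parameter of the generic fibre is interdefinable over $k$ with $b$ because the family $W\to\{Z_w\}$ is generically finite-to-one by (iii) — but it needs to be done carefully in the (quantifier-free, difference-field) setting, making sure that passing to $\loc(a,\sigma(a)/k(b))$ rather than $\loc(a/k(b))$ does not lose the dependence on $b$. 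Once that is in hand, the dimension count in Step (5) is immediate from $\dim Z>\dim V$, so that part is routine.
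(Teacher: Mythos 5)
Your proof is correct and rests on the same two pillars as the paper's: canonicity lets you recover $b$ from the minimal field of definition of the generic fibre $Z_b=\loc(a/k(b))$ (in characteristic zero this gives $b\in k\cdot F_0\subseteq k\cdot\cb(a/k(b))$, exactly as in the paper's Lemma~\ref{lem:cbred}(b) and the canonicity step of Proposition~\ref{prop:1b-geo}), while $\dim Z>\dim V$ gives $b\notin k(a)^{\alg}$. The only difference is packaging: the paper argues via counting Galois conjugates of the minimal field of definition of $\loc(a,\sigma(a)/k(b))$ over $k(a)$ rather than via the direct containment $b\in\acl\bigl(k\cup\cb(a/k(b))\bigr)$, but the underlying mechanism is identical.
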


\begin{proof}
Let $E$ be the minimal field of definition of the Zariski locus of $(a,\sigma(a))$ over $k(b)$.
By~\cite[Lemma~2.5]{qfint}, as $\qftp(a/k(b))$ is rational, $\cb(a/kb)$ is the difference field generated by~$E$.
It suffices to show, therefore, that $E\not\subseteq k(a)^{\alg}$.

Let $b_1,b_2$ be realisations of $\qftp(b/ka)$.
There is a difference field isomorphism $\alpha:(k(a,b_1),\sigma)\to(k(a,b_2),\sigma)$ that fixes $k(a)$ and takes $b_1$ to $b_2$, and which we can lift to a field-automorphism $\widehat\alpha$ of $\U$.
Then
$$\loc(a,\sigma(a)/k(b_1))^{\widehat\alpha}=\loc(a,\sigma(a)/k(b_2)).$$
Letting $E_i$ be the minimal field of definition of $\loc(a,\sigma(a)/k(b_i))$, for $i=1,2$, we get that $\widehat\alpha(E_1)=E_2$.
If $E_2=E_1$, then $\widehat\alpha$ must preserve $\loc(a,\sigma(a)/k(b_1))$, and so
$\loc(a,\sigma(a)/k(b_1))=\loc(a,\sigma(a)/k(b_2))$.
This in turn implies that
$\loc(a/k(b_1))=\loc(a/k(b_2))$.
As $\loc(a/k(b_i))$ is just $Z_{b_i}$, canonicity forces $b_1=b_2$.
So, if $b_1$ and $b_2$ were chosen to be distinct realisations of $\qftp(b/ka)$, then the corresponding $E_1,E_2$ are also distinct field-conjugates of $E$ over $k(a)$.
Since $\dim Z>\dim V$, we must have that $b\notin k(a)^{\alg}$, and hence $\qftp(b/ka)$ has infinitely many realisations, giving rise to infinitely many field-conjugates of $E$ over $k(a)$.
This shows that $E\not\subseteq k(a)^{\alg}$, as desired.
\end{proof}

We obtain the following geometric characterisation of one-basedness:

\begin{proposition}
\label{prop:1b-geo}
Suppose $(V,\phi)$ is a rational $\sigma$-variety over~$k$, and $p\in S_{\qf}(k)$ is its generic quantifier-free type.
    Then the following are equivalent:
    \begin{itemize}
        \item[(i)]
        $p$ is one-based.
        \item[(ii)]
        If $n\geq 1$ and $V_0$ is an irreducible invariant subvariety of $(V^n,\phi)$ over~$k$ that projects dominantly onto $V$ in each co-ordinate, then $(V_0,\phi)$ admits no rich families of invariant subvarieties.
        \end{itemize}
\end{proposition}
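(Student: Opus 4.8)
\emph{Overview.} The plan is to show that, up to a mild translation, a rich invariant family and a witness to failure of one‑basedness are the same object: Lemma~\ref{lem:richcb} produces the latter from the former, and a Chow‑coordinate construction resting on Lemma~\ref{lem:cbred} produces the former from the latter. I would prove both implications contrapositively.

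\emph{(i)$\Rightarrow$(ii).} Suppose some $(V_0,\phi)$ as in~(ii) carries a rich invariant family $Z\subseteq(V_0\times W,\phi\times\psi)$, and let $(a,b)$ be a generic point of $(Z,(\phi\times\psi)|_Z)$ over~$k$. Dominance of $Z\to V_0$ and of the coordinate projections $V_0\to V$, together with equivariance, makes $a$ a finite tuple of realisations of~$p$; and since $(a,b)$ is a $\sharp$-point, $b\in(W,\psi)^\sharp$, so $k(b)$ is a difference field extension of~$k$. By Lemma~\ref{lem:richcb}, $\cb(a/k(b))\notin k(a)^{\alg}=\acl(ka)$, so $p$ is not one-based. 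This direction is immediate from the lemma.

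\emph{(ii)$\Rightarrow$(i).} Assume $p$ is not one-based, witnessed by a finite tuple $a$ of realisations and a difference field $K\supseteq k$, which we may take algebraically closed, with $\cb(a/K)\notin\acl(ka)$. Set $V_0:=\loc(a/k)\subseteq V^n$; as $a$ is a generic $\sharp$-point, $(V_0,\phi)$ is a rational $\sigma$-variety projecting dominantly onto $V$ in each coordinate, and the goal is to build a rich family on it. Let $Y:=\loc(a/K)$, an absolutely irreducible subvariety of~$V_0$ (since $K$ is algebraically closed), let $F$ be its minimal field of definition, and let $b^*$ be its Chow point, so that $k(b^*)=k\cdot F$. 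By Lemma~\ref{lem:cbred}(b), in characteristic zero $k\cdot\cb(a/K)=\langle k\cdot F\rangle=k\langle b^*\rangle$; combined with the hypothesis and the fact (noted in~\S\ref{subsect:rt}) that $\acl(ka)=k(a)^{\alg}$ is an inversive difference field, this forces $b^*\notin k(a)^{\alg}$, whence $\trdeg(k(a,b^*)/k)>\dim V_0$. To put a $\sigma$-structure on the parameter space: as $K$ is inversive and $\sigma(a)=\phi(a)$, we get $\sigma(Y)=\loc(\phi(a)/K)=\overline{\phi(Y)}$, and since this variety is definable over $k\cdot F=k(b^*)$ (because $\phi$ is defined over~$k$), its Chow point — which is exactly $\sigma(b^*)$ — lies in $k(b^*)$. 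Writing $\sigma(b^*)=\psi(b^*)$ with $\psi$ a rational map over~$k$, the pair $(W,\psi)$, $W:=\loc(b^*/k)$, is a rational $\sigma$-variety and $b^*$ is a generic $\sharp$-point of it. Now let $Z:=\loc((a,b^*)/k)\subseteq V_0\times W$. Then $(a,b^*)\in(V_0\times W,\phi\times\psi)^\sharp$, so $Z$ is an invariant subvariety dominating both factors, and $\dim Z=\trdeg(k(a,b^*)/k)>\dim V_0$, giving condition~(ii) of Definition~\ref{def:rich}. Since $(a,b^*)$ lies on the universal family $\mathcal Z$ over the Chow variety, $Z$ is a full-dimensional irreducible component of $\mathcal Z|_W$, so its general fibres over $W$ agree with those of $\mathcal Z|_W$; as the fibre over~$b^*$ is $Y$, which is absolutely irreducible, spreading out gives condition~(i). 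Canonicity (condition~(iii)) is automatic since $W$ lies inside the Chow variety, where distinct points code distinct subvarieties. Hence $(V_0,\phi)$ carries a rich family, and~(ii) fails.

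\emph{The main obstacle.} The delicacy is entirely in the second implication, in two geometric bookkeeping steps. First, checking that $\sigma(b^*)\in k(b^*)$ so that the parameter space really is a $\sigma$-variety: this rests on the clean identity $\sigma(\loc(a/K))=\overline{\phi(\loc(a/K))}$ — valid precisely because $K$ is inversive and $\sigma(a)=\phi(a)$ — together with the fact that Chow points of subvarieties definable over a field lie in that field. Second, identifying $Z$ with a full-dimensional component of the universal family and spreading out, so that the general fibres of $Z\to W$ are genuinely absolutely irreducible and canonicity is visible on a dense open; this needs a little care with minimal fields of definition. Once these are in place, the transcendence-degree inequality and the two appeals to Lemmas~\ref{lem:cbred} and~\ref{lem:richcb} are routine, and the equivalence follows.
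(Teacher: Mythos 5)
Your proof is correct, and the first implication is identical to the paper's (a direct application of Lemma~\ref{lem:richcb} to a generic point of the rich family). In the converse direction you follow the same skeleton --- build the candidate family as $\loc(a,b/k)$ where $b$ generates $k\cdot\cb(a/K)$ over $k$, note that its fibre over $b$ is $\loc(a/K)$, and get $\dim Z>\dim V_0$ from $\cb(a/K)\notin\acl(ka)$ --- but you implement the canonicity requirement differently. The paper takes $b$ from Lemma~\ref{lem:cbred}(a), proves the injectivity statement ``$Z_b=Z_{b'}\Rightarrow b=b'$ for $b'\models\qftp(b/k)$'' using Lemma~\ref{lem:cbred}(b), and then repairs canonicity at the end by passing to a quotient $\mu:(W,\psi)\dto(W',\psi')$ of the parameter space (via \cite[Proposition~3.2]{qfint}), checking that richness survives because $\mu$ is generically injective on realisations of $\qftp(b/k)$. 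You instead choose the parameter to be the Chow point $b^*$ of $\loc(a/K)$ from the outset, so that canonicity is built in, at the cost of two verifications the paper avoids: that $\sigma(b^*)\in k(b^*)$ (so the Chow locus really is a rational $\sigma$-variety --- your argument via $\sigma(\loc(a/K))=\overline{\phi(\loc(a/K))}$ and functoriality of Chow points is fine, using that $K$ is inversive and we are in characteristic zero), and that the general fibres of $Z\to W$ coincide with the cycles coded by the parameters (your full-dimensional-component-of-the-universal-family argument is the right one). Lemma~\ref{lem:cbred}(b) is used by both proofs, just at different points: the paper uses it for injectivity, you use it to transfer $\cb(a/K)\notin\acl(ka)$ to $b^*\notin k(a)^{\alg}$. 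Your route trades the abstract quotient construction for concrete Chow-variety bookkeeping; both are legitimate, and the paper's version has the mild advantage of not leaving characteristic-zero-dependent Chow-theoretic facts implicit.
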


\begin{proof}
(i)$\implies$(ii).
Suppose $V_0$ is as in~(ii) and $Z\subseteq(V_0\times W,\phi\times\psi)$ is a rich invariant family of subvarieties.
Let $(a,b)$ be a generic point of $(Z,(\phi\times\psi)|_Z)$ over~$k$.
Since~$Z$ projects dominantly onto $V_0$ which in turn projects dominantly onto each copy of~$V$, and because these projections are equivariant, we have that~$a$ is an $n$-tuple of realisations of~$p$.
Letting $K:=k(b)$, Lemma~\ref{lem:richcb} tells us that $\cb(a/K)\notin k(a)^{\alg}$, showing that~$p$ is not one-based.

(ii)$\implies$(i).
Suppose $a$ is an $n$-tuple of realisations of $p$, and $K\supseteq k$ is a difference field extension such that $\cb(a/K)\notin k(a)^{\alg}$.
So~$a$ and~$K$ witness that~$p$ is not 1-based.
We may assume that $K=K^{\alg}$, using~\cite[Lemma~2.5(a)]{qfint}.
Let $V_0:=\loc(a/k)$, which is an irreducible invariant subvariety of $(V^n,\phi)$ projecting dominantly onto each~$V$ in each co-ordinate.
We aim to show that $(V_0,\phi)$ admits a rich family invariant of subvarieties.

Let $E:=k\cdot\cb(a/K)$.
By Lemma~\ref{lem:cbred}(a),
$E=k(b)=k\langle b\rangle$ for some finite tuple $b$ from~$K$.
In particular, $\qftp(b/k)$ is rational, and hence the generic quantifier-free type of a rational $\sigma$-variety $(W,\psi)$ over~$k$.
Let $Z=\loc(a,b/k)$.
This is an irreducible invariant subvariety of $(V_0\times W,\phi\times\psi)$ that projects dominantly onto~$V_0$ and~$W$, and will be our candidate for a rich family.

Observe first that $Z_b=\loc(a/K)$.
Indeed, as $\loc(a/K)$ is model-theoretically defined over the minimal field of definition of $\loc(a,\sigma(a)/K)$, and the latter is contained in $\cb(a/K)$, we have that $\loc(a/K)$ is defined over $k(b)$.
But $Z_b=\loc(a/k(b))$.
Hence, $Z_b\subseteq\loc(a/K)$.
On the other hand, as $k(b)\subseteq K$, we have that
$\loc(a/K)\subseteq\loc(a/k(b))=Z_b$, as desired.

In particular, as $K$ is algebraically closed, the general fibres of $Z\to W$ are absolutely irreducible.
Moreover, our assumption that $\cb(a/K)\notin k(a)^{\alg}$ implies that $k(b)\not\subseteq k(a)^{\alg}$, so we must have that $\dim Z>\dim V_0$.

So it remains to address canonicity. 
We first show that if $b'\models\qftp(b/k)$ and $Z_b=Z_{b'}$ then $b=b'$.
Indeed, let $\alpha:(k(b),\sigma)\to(k(b'),\sigma)$ be a $\sigma$-field-isomorphism over~$k$ such that $\alpha(b)=b'$, and
lift $\alpha$ to a field-automorphism $\widehat\alpha$ of~$\U$.
Then $Z_b^{\widehat\alpha}=Z_{b'}=Z_b$, from which it follows that  $\widehat\alpha$ is the identity on the minimal field of definition of $Z_b=\loc(a/K)$, which we denote by $F$.
Since $F\subseteq k(b)$, and $\widehat\alpha$ agrees with $\alpha$ on $k(b)$, we have that $\alpha$ is the identity on $F$.
As $\alpha$ preserves $\sigma$, it follows that $\alpha$ is the identity on $\langle k\cdot F\rangle$.
But by Lemma~\ref{lem:cbred}(b), the latter is $E$.
Hence $b'=\alpha(b)=b$, as desired.

Using the above we can, if necessary, modify the base of our family to make it canonical, without losing richness (namely $\dim Z>\dim V_0$).
Indeed, there exists an equivariant dominant rational map $\mu:(W,\psi)\dto(W',\psi')$ such that for general $b_1,b_2$ in~$W$, $\mu(b_1)=\mu(b_2)$ if and only if $Z_{b_1}=Z_{b_2}$.
This basically says that quotients exist in the category of rational $\sigma$-varieties; it can be deduced from elimination of imagianaries in~$\acf$, and an explicit proof can be found in~\cite[Proposition~3.2]{qfint}.
We obtain, thereby, an irreducible invariant subvariety $Z'\subseteq V_0\times W'$ with the same general fibres over~$W'$ as $Z\to W$, but now canonical.
The fact we proved in the previous paragraph, namely that  $Z_b\neq Z_{b'}$ whenever $b\neq b'\models\qftp(b/k)$, implies that $b\in\dcl(k\mu(b))\subseteq k(\mu(b))^{\alg}$, so that $\dim W=\dim W'$, and hence it is still the case that $\dim Z'>\dim V_0$.
\end{proof}

An example of a one-based rational type is the generic type of the rational dynamical system $(A,\phi)$ of  Example~\ref{example-sav}, where~$A$ is a simple abelian variety and $\phi\in \endo(A)$ is not of finite order.
This follows from~\cite{mm}, namely the property \emph{LMS} discussed there.

Of course,  the fixed field is not one-based; or rather, the generic type of $(\AA^1,\id)$ is not one-based.
Let us exhibit a rich invariant family of subvarieties of $(\AA^2,\id)$.
The parameter space (namely the $(W,\psi)$ of Definition~\ref{def:rich}) will be $(\AA^2,\id)$ as well, and we take $Z$ to be defined by $v_2=w_1v_1+w_2$ in co-ordinates $(v_1,v_2,w_1,w_2)$ for $\AA^2\times\AA^2$.
If we consider projection onto $(w_1,w_2)$ then the fibre of~$Z$ above $(c,d)$ is the line $v_2=cv_1+d$.
So $Z\to\AA^2$ is the family of all (non-vertical) lines in $\AA^2$.
It is easy to verify that all the conditions of being a rich family are satisfied: invariance (as the dymnamics are trivial all subvarieties are invariant), absolute irreducibility of the fibres, canonicity, and $\dim Z=3>\dim(\AA^2)$.

The following is a version of the Zilber dichotomy theorem for rank one types established in~\cite{acfa}, but adapted to the context of rational types.

\begin{theorem}[Zilber dichotomy for minimal rational types in characteristic 
  zero]\label{thm:zd}
If
$p\in S_{\qf}(k)$ is minimal and rational then either $p$ is one-based or there is a finite-to-one 
rational map $p\to q$ where $q\in S_{\qf}(k)$ is rational, $1$-dimensional, and qf-internal to~$\C$.
\end{theorem}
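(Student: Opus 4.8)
The plan is to assume that $p$ is not one-based and to produce the finite-to-one rational map asserted in the second alternative; the argument splits into three stages. First I would establish that $p$ is nonorthogonal to~$\C$. Granting this, the rest is quick: $p$ is minimal and hence nonalgebraic, it is rational, and it is stationary as a complete quantifier-free type over the algebraically closed $k$, so Lemma~\ref{lem:aim-dim1} forces $\dim(p)=1$. Then, by the characterisation of nonorthogonality to~$\C$ for rational types recalled in Section~\ref{prelims} from~\cite{qfint}, $p$ admits a rational map $f\colon p\to q$ with $q$ a positive-dimensional rational type that is qf-internal to~$\C$. Fixing $a\models p$, the tuple $f(a)$ lies in $k(a)$ while $\dim(q)>0$, so $\qftp(a/kf(a))$ is a forking extension of~$p$, hence algebraic by minimality (Proposition~\ref{prop:min-equiv}); thus $a\in k(f(a))^{\alg}$, meaning $f$ is finite-to-one, and therefore $\dim(q)=\dim(p)=1$. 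That is exactly the map in the statement.

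For the first stage I would combine the failure of one-basedness with the canonical base property. By Proposition~\ref{prop:1b-geo} (or, more directly, by the definition of one-basedness together with Lemma~\ref{lem:cbred}) there are a finite tuple~$a$ of realisations of~$p$ and an algebraically closed difference field $K\supseteq k$ with $\cb(a/K)\notin\acl(ka)$; by Lemma~\ref{lem:cbred}(a) we may take $k\cdot\cb(a/K)=k(b)=k\langle b\rangle$, where $b$ is a finite tuple drawn from the field generated over~$k$ by finitely many realisations of~$p$, so that $\qftp(b/k)$ is rational. Now Fact~\ref{fact:cbp} applies: $\qftp(b/k(a))$ is qf-internal to~$\C$, and it is positive-dimensional since $b\notin\acl(ka)$. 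Moreover $a$ depends on~$b$ over~$k$, for otherwise, using $a\ind_{k(b)}K$, one would get $a\ind_kK$ and hence $\cb(a/K)\in k$, contradicting $b\notin\acl(ka)$. The decisive remaining point is to propagate the qf-$\C$-internality of the canonical base~$b$ downward so as to conclude $p\not\perp\C$. This is precisely the standard deduction of the Zilber dichotomy from the CBP, and I would extract it from~\cite{pillay-ziegler} (and, in the difference-field setting, from~\cite{acfa}), specialising that argument---which genuinely uses the minimality of~$p$---to rational types. An alternative route is to invoke the Chatzidakis--Hrushovski dichotomy directly: in characteristic zero a minimal non-one-based type is nonorthogonal to $\fix(\sigma^n)$ for some $n\geq 1$, and since the elementary symmetric functions of any $\sigma$-orbit are $\sigma$-fixed, every element of $\fix(\sigma^n)$ is algebraic over~$\C$; thus the generic type of $\fix(\sigma^n)$ is not weakly orthogonal to~$\C$, and minimality of~$p$ then yields $p\not\perp\C$.

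The step I expect to be the main obstacle is exactly this propagation from internality of the canonical base to nonorthogonality of~$p$ itself---the genuine content of ``the Zilber dichotomy follows from the CBP''---which I would cite rather than reprove. By contrast, the reduction to $\dim(p)=1$ and the extraction of the finite-to-one map are routine given Lemma~\ref{lem:aim-dim1}, Proposition~\ref{prop:min-equiv}, and the nonorthogonality criterion of~\cite{qfint}; the only bookkeeping is to verify that $q$ may be taken over~$k$ and $1$-dimensional, which falls out of $\dim(p)=1$, minimality, and the form of that criterion.
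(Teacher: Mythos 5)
Your proposal is correct and follows the same CBP-based strategy as the paper: witness non-one-basedness by $\cb(a/K)\notin\acl(ka)$ with $K$ algebraically closed, package the canonical base as a rational tuple $b$ via Lemma~\ref{lem:cbred}(a), apply Fact~\ref{fact:cbp}, conclude $p\not\perp\C$, and then extract a finite-to-one rational map to a qf-internal $q$. Two differences are worth recording. First, the step you defer to citation---propagating the qf-internality of $\qftp(b/ka)$ down to $p\not\perp\C$---is exactly the step the paper writes out in full: from $b\in L(c)$ with $b\ind_{ka}L$ and $b\notin\acl(L)$ one gets $b\nind_Lc$; then, writing $b\in\acl(La_1\dots a_m)$ for realisations $a_i$ of $p$ with each $a_{i+1}\notin\acl(La_1\dots a_i)$, some $a_{i+1}\nind_{L'}c$ with $L'=L(a_1,\dots,a_i)$, and minimality gives $a_{i+1}\ind_kL'$, which witnesses $p\not\perp\C$. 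This is short, but it is the only place minimality enters stage one, and a bare citation of~\cite{pillay-ziegler} (which works with complete types) would leave precisely this quantifier-free bookkeeping to the reader; your fallback of invoking the Chatzidakis--Hrushovski dichotomy directly is, however, explicitly endorsed by the paper (Theorems~4.3 and~4.11 of~\cite{acfa} together with Remark~\ref{rem:modular}). Second, your route to $\dim(q)=1$ via Lemma~\ref{lem:aim-dim1} is a genuine, and arguably cleaner, shortcut: the paper instead redoes an $SU$-rank computation to get $\dim(q)=1$, precisely because $f$ need not be surjective on realisations and so $q$ need not be minimal; your observation that a finite-to-one rational map preserves dimension, combined with $\dim(p)=1$ from Lemma~\ref{lem:aim-dim1} (which applies since $p$ is rational, nonalgebraic, stationary over the algebraically closed $k$, and now known to be nonorthogonal to~$\C$), sidesteps that issue entirely.
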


\begin{proof}
    Suppose $p$ is not one-based.
    We first show that $p\not\perp\C$.
    This does follow directly from results in~\cite{acfa}, namely, by combining Theorems~4.3 and~4.11 of that paper, and using the fact that our notion of ``one-based" agrees with their notion of ``modularity" (see our Remark~\ref{rem:modular}).
    However, we will follow the much simpler approach of~\cite{pillay-ziegler} and use instead the canonical base property for rational types, namely our Fact~\ref{fact:cbp} above.
    
        Non-one-basedness is witnessed by a tuple of realisations of~$p$, $a$, and a difference field extension $K\supseteq k$, such that $\cb(a/K)\not\in\acl(ka)$.
    We may take~$K$ to be algebraically closed.
    Let~$b$ be such that $k\cdot\cb(a/K)=k(b)$, this exists by 
    Lemma~\ref{lem:cbred}(a).
    Note that as $p$ is rational so is $\qftp(a/K)$.
    Fact~\ref{fact:cbp} therefore applies to $\qftp(a/K)$, and we have that $\qftp(b/ka)$ is qf--internal to~$\C$.
    So $b\in L(c)$ for some difference field extension $L\supseteq k(a)$ with $b\ind_{ka}L$ and $c$ a finite tuple from~$\C$.
    Since $b\notin\acl(ka)$, we must have that $b\notin\acl(L)$.
    Hence $b\nind_Lc$.   
    On the other hand, $b$ is in the field generated over~$k$ by a sequence of realisations of~$p$, this is the ``moreover" clause of~\ref{lem:cbred}(a) together with the fact that $a$ is a tuple of realisations of $p$.
    In particular, we can choose realisations $a_1,\dots,a_m$ of~$p$ such that $b\in\acl(La_1\dots a_m)$ and $a_{i+1}\notin\acl(La_1\dots a_i)$, for each $i<m$.
As $b\nind_Lc$, we must have $(a_1,\dots,a_m)\nind_Lc$, and so for some $i<m$ we have $a_{i+1}\nind_{L'}c$ where $L':=L(a_1,\dots,a_i)$.
But $a_{i+1}\ind_kL'$ because~$p$ is minimal and $a_{i+1}\notin\acl(L')$.
So $a_{i+1}\nind_{L'}c$ witnesses that $p\not\perp\C$.

Now, by~\cite[Proposition~2.11]{qfint}, that~$p$ is nonorthogonal to~$\C$ is witnessed by a rational map \(f:p\to q\) where $q\in S_{qf}(k)$ is a positive-dimensional rational type that is qf-internal to~\(\C\).
Minimality of~$p$ forces~$f$ to be finite-to-one.

It remains to show that~$q$ is $1$-dimensional.
(Note that as~$f$ need not be surjective, we don't even know {\em a prioiri} that~$q$ is minimal!)
Fix $a\models p$, so that $f(a)\models q$ and $\acl(ka)=\acl(kf(a))$.
As~$q$ is qf-internal to~$\C$ we have a difference field extension $L\supseteq k$ with $f(a)\ind_kL$, and a tuple~$c$ from $\C$ such that $L(f(a))=L(c)$.
Indeed, by definition we only have $f(a)\in L(c)$, but that we can choose~$L$ so that the other containment also holds follows from~\cite[Proposition~2.11]{qfint}.
We may also choose~$L$ such that $L\ind_{kf(a)}a$, so that $a\ind_kL$.
We now compute that
\begin{eqnarray*}
\dim(q)
&=&
\trdeg(f(a)/k)\ \ \ \text{ by definition,}\\
&=& \trdeg(f(a)/L)\ \ \ \text{ as $f(a)\ind_kL$,}\\
&=&
\trdeg(c/L)\ \ \ \text{ as $L(f(a))=L(c)$,}\\
&=&
SU(c/L)\ \ \ \text{ as $c$ is from the fixed field,}\\
&=&
SU(a/L)\ \ \ \text{ as $\acl(La)=\acl(Lf(a))=\acl(Lc)$,}\\
&=&
SU(a/k)\ \ \ \text{ as $a\ind_kL$,}\\
&=&
1\ \ \ \text{ as $p$ is minimal,}
\end{eqnarray*}
as desired.
\end{proof}

Here is an explicitly geometric formulation of the dichotomy:

\begin{corollary}
\label{cor:zd}
Suppose $(V,\phi)$ is a simple rational $\sigma$-variety over $(k,\sigma)$.
Then exactly one of the following holds:
\begin{itemize}
\item[(1)]
For any $n\geq 1$ and $V_0$ an irreducible invariant subvariety of $(V^n,\phi)$ over~$k$ that projects dominantly onto $V$ in each co-ordinate, $(V_0,\phi)$ admits no rich families of invariant subvarieties in the sense of Definition~\ref{def:rich} above.
\item[(2)]
$\dim V=1$ and there exists dominant $(V,\phi)\dto(V',\phi')$ over~$k$ such that $(V',\phi')$ is an isotrivial $\sigma$-curve.
\end{itemize}
\end{corollary}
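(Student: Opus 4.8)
The plan is to read off this dichotomy from the model-theoretic Zilber dichotomy (Theorem~\ref{thm:zd}) together with the geometric translations already in hand. Let $p\in S_{\qf}(k)$ be the generic quantifier-free type of $(V,\phi)$. By Proposition~\ref{prop:min-simple}, simplicity of $(V,\phi)$ is precisely minimality of $p$, so Theorem~\ref{thm:zd} applies and gives two cases. If $p$ is one-based, then the implication (i)$\Rightarrow$(ii) of Proposition~\ref{prop:1b-geo} is exactly alternative~(1). If instead there is a finite-to-one rational map $f\colon p\to q$ with $q$ rational, $1$-dimensional, and qf-internal to~$\C$, I would let $(V',\phi')$ be the rational $\sigma$-variety with generic quantifier-free type~$q$, and view $f$ as a dominant equivariant rational map $(V,\phi)\dto(V',\phi')$. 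Finiteness of~$f$ forces $\dim V=\dim V'=\dim q=1$, and since $q$ is qf-internal to~$\C$ and the characteristic is zero, \cite[Proposition~3.6]{qfint} makes $(V',\phi')$ isotrivial; thus $(V',\phi')$ is an isotrivial $\sigma$-curve and alternative~(2) holds. So at least one of (1),(2) holds.

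To upgrade ``one of'' to ``exactly one'' I would show that (2) implies $p$ is not one-based. Assume (2) and fix a dominant equivariant rational map $(V,\phi)\dto(V',\phi')$ onto an isotrivial $\sigma$-curve, with $q$ the generic quantifier-free type of $(V',\phi')$. By \cite[Proposition~3.6]{qfint}, $q$ is qf-internal to~$\C$, and being positive-dimensional it is nonorthogonal to~$\C$; the induced rational map $p\to q$ then witnesses $p\not\perp\C$, using the characterisation of nonorthogonality to~$\C$ recalled in Section~\ref{subsect:nonorth}. Since $p$ is minimal, the orthogonality calculus for minimal types yields that $p$ is nonorthogonal to the generic quantifier-free type $p_0$ of $(\AA^1,\id)$ (every non-algebraic type realised in~$\C$ being nonorthogonal to~$p_0$, as $\C$ carries the structure of a pure algebraically closed field). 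But $p_0$ is minimal and \emph{not} one-based --- the rich family of lines in $(\AA^2,\id)=(\AA^1,\id)^2$ exhibited after Proposition~\ref{prop:1b-geo} shows this via that proposition --- and nonorthogonal minimal types are simultaneously one-based or not. Hence $p$ is not one-based, so by Proposition~\ref{prop:1b-geo} alternative~(1) fails, and the two cases are disjoint.

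The only genuinely delicate point is this exclusivity step, i.e.\ the ``reverse'' direction of the dichotomy (Theorem~\ref{thm:zd} asserts only ``one-based \emph{or} $\dots$''): one must rule out a type that is at once modular and coordinatised by the highly non-modular fixed field. The orthogonality-calculus argument above does this cleanly. A more hands-on alternative would be to pull the rich family of lines on $(\AA^2,\id)$ back along the dominant finite-to-one map $(V^2,\phi)\dto(V'^2,\phi')$ to produce a rich family of invariant subvarieties on $V_0:=\loc(a_1a_2/k)\subseteq V^2$, where $(a_1,a_2)$ is a pair of independent realisations of~$p$; this stays within the tools of the excerpt, but checking that absolute irreducibility of fibres and canonicity survive the pullback is the part I would expect to cost the most work.
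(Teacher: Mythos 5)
Your proposal is correct and follows essentially the same route as the paper: minimality via Proposition~\ref{prop:min-simple}, the equivalence of one-basedness with alternative~(1) via Proposition~\ref{prop:1b-geo}, Theorem~\ref{thm:zd} for the non-one-based case, and \cite[Proposition~3.6]{qfint} to translate qf-internality into isotriviality. The only difference is that you spell out the exclusivity of the two alternatives (a minimal type coordinatised by~$\C$ cannot be one-based), a point the paper leaves implicit; your orthogonality-calculus argument for that step is standard and sound.
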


\begin{proof}
We may assume that $\dim V>0$.
Let~$p$ be the generic quantifier-free type of $(V,\phi)$ over~$k$.
Then $p$ is minimal by simplicity (Proposition~\ref{prop:min-simple}).
Proposition~\ref{prop:1b-geo} tells us that one-basedness of~$p$ is equivalent to condition~$(1)$.
Assuming $p$ is not one-based, Theorem~\ref{thm:zd} tell us that there is a finite-to-one rational map $p\to q$ where $q$ is $1$-dimensional and qf-internal to~$\C$.
Letting $(V',\phi')$ be the rational $\sigma$-curve that $q$ is the generic of, we obtain a generically finite-to-one dominant rational map $(V,\phi)\dto(V',\phi')$.
It is shown in~\cite[Proposition~3.6]{qfint} that qf-internality of~$q$ to~$\C$ is equivalent to $(V',\phi')$ being isotrivial.
\end{proof}

Case~(1) admits a further analysis, carried out in~\cite[$\S$5.12]{acfa} and~\cite[$\S$4.1]{mm}, and splitting up further into the ``relationally trivial" case and a situation controlled by definable subgroups of simple abelian varieties.
But this further analysis requires one to work in the more general category of rational ``$\sigma$-correspondences" rather than $\sigma$-varieties, and we do not pursue it here.

\bigskip
\section{Exchange}
\label{sec:x}

\noindent
Much easier to verify than minimality is the following significant weakening that is motivated by the notion of ``no proper fibrations" introduced in~\cite{moosa-pillay2014}.

\begin{definition}
We will say that a rational type $p\in S_{\qf}(k)$ satisfies \emph{exchange} if the following holds: whenever $a\models p$ and $b\in\acl(ka)\setminus k$ with $\qftp(b/k)$ rational, then $a\in\acl(kb)$.
\end{definition}

Let us describe what this says about a rational $\sigma$-variety $(V,\phi)$ over~$k$.
Recall that $(V,\phi)$ is {\em primitive} if  whenever $(V,\phi)\dto(W,\psi)$ is a dominant equivariant rational map then $\dim W$ is either~$0$ or $\dim V$.
This was defined in the Introduction in the autonomous case, but it makes sense for rational $\sigma$-varieties in general.
We say that 
$(V,\phi)$ is {\em strongly primitive} if $(V',\phi')$ is primitive whenever $(V',\phi')$ admits a dominant equivariant rational map to~$(V,\phi)$ and $\dim(V')=\dim V$.
Exchange is strong primitivity of the corresponding rational $\sigma$-variety:

\begin{lemma}
\label{exchange-geom}
Suppose $p$ is the generic quantifier-free type of a rational $\sigma$-variety $(V,\phi)$ over~$k$.
Then $p$ satisfies exchange if and only if $(V,\phi)$ is strongly primitive.
\end{lemma}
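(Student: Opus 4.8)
The plan is to prove the equivalence by unwinding both sides into statements about dominant equivariant rational maps and then translating via the standard dictionary between such maps and the field-theoretic structure of a generic point. Fix $a \models p$, so that $(V,\phi)$ has generic point $a$ and $k(a) = k\langle a\rangle = \acl(ka)^{\text{--}}$; more precisely, since $p$ is rational, $\acl(ka) = k(a)^{\alg}$. The key dictionary I will use is: intermediate difference fields $k \subseteq k(b) \subseteq k(a)$ with $\qftp(b/k)$ rational correspond (up to field-theoretic conjugacy, i.e.\ up to finite extension) to dominant equivariant rational maps $(V,\phi) \dto (W,\psi)$, where $(W,\psi)$ is the rational $\sigma$-variety with generic point $b$; and $\dim W = \trdeg(k(b)/k)$.

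First I would spell out the two directions. For ($\Leftarrow$): suppose $(V,\phi)$ is strongly primitive, and take $a \models p$ and $b \in \acl(ka) \setminus k$ with $\qftp(b/k)$ rational. Let $(W,\psi)$ be the rational $\sigma$-variety whose generic point is $b$, so there is a dominant equivariant rational map $f : (V',\phi') \dto (V,\phi)$ and a dominant equivariant rational map $(V',\phi') \dto (W,\psi)$, where $V' = \loc(a/k)$... actually more simply: $b \in k(a)^{\alg}$, so pass to $a' $ with $k(a') = k(a,b)^{\text{--}}$, giving $(V',\phi')$ with a finite (hence dimension-preserving) dominant equivariant map to $(V,\phi)$ and a dominant equivariant map $g : (V',\phi') \dto (W,\psi)$. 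Strong primitivity says $(V',\phi')$ is primitive, so $\dim W \in \{0, \dim V'\} = \{0, \dim V\}$. Since $b \notin k$, $\dim W > 0$, so $\dim W = \dim V'$, which forces $g$ to be finite-to-one, i.e.\ $a' \in k(b)^{\alg}$, hence $a \in k(b)^{\alg} = \acl(kb)$. This gives exchange. For ($\Rightarrow$): suppose $p$ satisfies exchange, let $(V',\phi')$ admit a dominant equivariant rational map $h: (V',\phi') \dto (V,\phi)$ with $\dim V' = \dim V$, and let $(V',\phi') \dto (W,\psi)$ be dominant equivariant with $0 < \dim W < \dim V'$. Take $a' $ generic in $V'$; then $h(a') \models p$, and since $h$ is dominant and dimension-preserving, $h(a') \in \acl(ka')$ and $a' \in \acl(k h(a'))$, so $\acl(ka') = \acl(k h(a'))$. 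Letting $b = $ image of $a'$ in $W$, we have $b \in \acl(ka') = \acl(k h(a'))$, with $\qftp(b/k)$ rational and $b \notin k$. Exchange (applied to $p$, realised by $h(a')$) gives $h(a') \in \acl(kb)$, hence $a' \in \acl(kb)$, hence $\dim V' = \trdeg(k(a')/k) \le \trdeg(k(b)/k) = \dim W < \dim V'$, a contradiction. So $(V',\phi')$ is primitive, and $(V,\phi)$ is strongly primitive.

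The main technical point — and the step I would be most careful about — is the correspondence between the model-theoretic data ($a$, $b \in \acl(ka)$, rationality of $\qftp(b/k)$) and the geometric data (dominant equivariant rational maps, finite covers), together with the bookkeeping of the fact that one must sometimes pass to a finite cover $a'$ of $a$ so that $k(a')$ literally contains $b$ rather than just $b \in k(a)^{\alg}$. I would note explicitly that passing from $a$ to $a'$ with $k(a') = k(a,b)^{\text{--}}$ replaces $(V,\phi)$ by a $(V',\phi')$ with a finite dominant equivariant map to $(V,\phi)$ and $\dim V' = \dim V$, which is exactly the kind of object strong primitivity is designed to handle; conversely $\acl(kb) = \acl(ka)$ iff the corresponding map is finite-to-one iff the dimensions agree, and these equivalences are recorded in Section~\ref{subsect:rt}. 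Once this dictionary is in place both directions are short, so I would keep the geometric translations of "dominant equivariant rational map $\leftrightarrow$ intermediate rational difference subfield" front and center and let the rest follow.

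\begin{proof}
Let $a\models p$, so that $(V,\phi)$ has generic point $a$ over $k$; since $p$ is rational, $k\langle a\rangle=k(a)$ and $\acl(ka)=k(a)^{\alg}$.

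Recall from Section~\ref{subsect:rt} the following dictionary. If $b\in\acl(ka)\setminus k$ with $\qftp(b/k)$ rational, let $(W,\psi)$ be the rational $\sigma$-variety whose generic quantifier-free type is $\qftp(b/k)$; choose $a'$ with $k(a')=k(a,b)^{\alg}\cap k\langle a,b\rangle$ realising a rational type, and let $(V',\phi')=(\loc(a'/k),\phi')$. Then there is a finite-to-one dominant equivariant rational map $(V',\phi')\dto(V,\phi)$ (as $a\in k(a')$ and $a'\in k(a)^{\alg}$, so $\dim V'=\dim V$), and a dominant equivariant rational map $g:(V',\phi')\dto(W,\psi)$ sending $a'$ to $b$, with $\dim W=\trdeg(k(b)/k)$. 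Moreover $g$ is finite-to-one if and only if $a'\in\acl(kb)$, if and only if $a\in\acl(kb)$.

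$(\Leftarrow)$ Suppose $(V,\phi)$ is strongly primitive and let $a\models p$, $b\in\acl(ka)\setminus k$ with $\qftp(b/k)$ rational. With $(V',\phi')$, $(W,\psi)$, and $g$ as above, strong primitivity applied to the finite-to-one dominant equivariant map $(V',\phi')\dto(V,\phi)$ gives that $(V',\phi')$ is primitive. Hence $\dim W\in\{0,\dim V'\}$. As $b\notin k$ we have $\dim W>0$, so $\dim W=\dim V'$, forcing $g$ to be finite-to-one. Therefore $a\in\acl(kb)$, and $p$ satisfies exchange.

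$(\Rightarrow)$ Suppose $p$ satisfies exchange, and let $(V',\phi')$ admit a dominant equivariant rational map $h:(V',\phi')\dto(V,\phi)$ with $\dim V'=\dim V$. Suppose, towards a contradiction, that $(V',\phi')$ is not primitive, witnessed by a dominant equivariant rational map $\mu:(V',\phi')\dto(W,\psi)$ with $0<\dim W<\dim V'$. Let $a'$ be a generic point of $(V',\phi')$ over $k$, and put $a:=h(a')$ and $b:=\mu(a')$. By dominance, $a\models p$ and $\qftp(b/k)$ is the generic quantifier-free type of $(W,\psi)$, hence rational; also $b\notin k$ since $\dim W>0$. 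As $h$ is dominant with $\dim V'=\dim V$, it is finite-to-one, so $a'\in\acl(ka)$ and $\acl(ka)=\acl(ka')$. Thus $b\in\acl(ka')=\acl(ka)$, with $b\notin k$ and $\qftp(b/k)$ rational. Exchange applied to $a\models p$ gives $a\in\acl(kb)$, hence $a'\in\acl(ka)\subseteq\acl(kb)$, so
\begin{equation*}
\dim V'=\trdeg(k(a')/k)\leq\trdeg(k(b)/k)=\dim W<\dim V',
\end{equation*}
a contradiction. Therefore $(V',\phi')$ is primitive, and $(V,\phi)$ is strongly primitive.
\end{proof}
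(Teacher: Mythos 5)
Your proof is correct and follows essentially the same route as the paper's: for strong primitivity $\Rightarrow$ exchange you pass to the finite cover $(V',\phi')$ with generic point $(a,b)$ (your formula $k(a,b)^{\alg}\cap k\langle a,b\rangle$ just simplifies to $k(a,b)$, which is the paper's $V'=\loc(a,b/k)$) and apply primitivity of $(V',\phi')$ to the projection onto $W$; for the converse you push a generic point of $(V',\phi')$ down to $(V,\phi)$ and $(W,\psi)$ and apply exchange, exactly as the paper does, merely phrased as a contradiction rather than directly. No gaps.
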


\begin{proof}
Suppose $p$ satisfies exchange and we are give dominant equivariant rational maps $f:(V',\phi')\dto (V,\phi)$ and $g:(V',\phi')\dto(W,\psi)$ with $\dim (V')=\dim V$.
We need to show that $\dim W=0$ or $\dim W=\dim V$.
Let~$a'$ be a generic point of $(V',\phi')$ so that $a:=f(a)\models p$.
Then $b:=g(a')\in\dcl(ka')\subseteq \acl(ka)$.
Note that $\qftp(b/k)$ is rational as it is the generic quantifier-free type of $(W,\psi)$.
If $\dim W\neq 0$ then $b\notin\acl(k)$, and hence, by exchange, $a\in\acl(kb)$.
This forces $\dim W=\dim(b/k)=\dim(a/k)=\dim V$, as desired.

Conversely, suppose $(V,\phi)$ is strongly primitive, $a\models p$, and $b\in\acl(ka)\setminus k$ is such that $\qftp(b/k)$ is rational.
So $\qftp(b/k)$ is the generic quantifier-free type of a positive-dimensional rational $\sigma$-variety $(W,\psi)$.
Note that $\qftp(a,b/k)$ is also rational; it is the generic quantifier-free type of $(V',\phi')$ where $V'=\loc(a,b/k)$ (and so $\dim(V')=\dim V$).
The co-ordinate projections give us dominant equivariant rational maps $(V',\phi')\dto (V,\phi)$ and $(V',\phi')\dto(W,\psi)$.
By strong primitivity, as $\dim W>0$, we must have $\dim W=\dim V'$, which forces $a\in\acl(kb)$, as desired.
\end{proof}

That every rational minimal type satisfies exchange is clear: If $a\models p$ then $\su(a/k)=1$, and so if $b\in\acl(ka)\setminus k$ then $a\nind_kb$ and hence $\su(a/kb)=0$, which in turn forces $a\in\acl(kb)$.
But there are nonminimal types with exchange; a class of interesting examples is worked out in~$\S$\ref{xexample} below.

The following extends the Zilber dichotomy to all rational types satisfying exchange (not just minimal ones).
For complete finite rank types in stable theories this is essentially~\cite[Proposition~2.3]{moosa-pillay2014}.

\begin{theorem}
\label{thm:nonorth}
If $p\in S_{\qf}(k)$ is rational and satisfies exchange then exactly one of the following hold:
\begin{enumerate}
\item
$p$ is minimal and one-based, or
\item
there is a finite-to-one 
rational map $p\to q$ where $q\in S_{\qf}(k)$ is rational and qf-internal to~$\C$.
\end{enumerate}
\end{theorem}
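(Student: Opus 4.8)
The plan is to dispatch the two ``easy'' alternatives first and then reduce everything to a minimality statement. The two cases are mutually exclusive: a one-based type is orthogonal to $\C$ (one-based types are analysable in one-based minimal types, and a one-based minimal type is orthogonal to $\C$ by the Zilber dichotomy of~\cite{acfa}), whereas a finite-to-one rational map $p\to q$ with $q$ positive-dimensional and qf-internal to $\C$ exhibits $q$ as interalgebraic with $p$ over $k$ and with $q\not\perp\C$, so if (1) held, $q$ would be simultaneously one-based and nonorthogonal to $\C$. For ``at least one'': if $p$ is algebraic then every realisation lies in $k=\acl(k)$, so $p$ is trivially qf-internal to $\C$ and (2) holds via the identity, so assume $p$ is nonalgebraic. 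If $p\not\perp\C$, then by~\cite[Proposition~2.11]{qfint} there is a rational map $f\colon p\to q$ with $q\in S_{\qf}(k)$ rational, qf-internal to $\C$, and positive-dimensional; for $a\models p$ and $b:=f(a)$ we have $b\in\acl(ka)\setminus k$ with $\qftp(b/k)=q$ rational, so exchange gives $a\in\acl(kb)$, i.e.\ $f$ is finite-to-one and (2) holds. Thus it remains to treat the case $p\perp\C$, and in that case I will show $p$ is minimal; granting this, Theorem~\ref{thm:zd} (whose second alternative would force $p\not\perp\C$) gives that $p$ is one-based, so (1) holds.

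So suppose, for contradiction, that $p\perp\C$ and $p$ is nonalgebraic, satisfies exchange, but is not minimal. Choose $a\models p$ and an algebraically closed difference field $K\supseteq k$ with $a\notin\acl(K)$, $a\nind_k K$, and $\su(a/K)\geq 1$ (possible since $\su(p)\geq 2$). Let $b$ be as in Lemma~\ref{lem:cbred}(a), so $k\cdot\cb(a/K)=k(b)=k\langle b\rangle$, and, using the ``moreover'' clause with $a$ included among the realisations, $b$ lies in the perfect closure of $k(a,a_2,\dots,a_m)$ for some $a_2,\dots,a_m\models p$. Since $\cb(a/K)\subseteq K$ we get $b\in K$; since $a\ind_{\cb(a/K)}K$ and $\cb(a/K)\subseteq k(b)\subseteq K$, base monotonicity gives $a\ind_{k(b)}K$, hence $\su(a/k(b))=\su(a/K)\geq 1$, so in particular $a\notin\acl(kb)$. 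Also $b\notin k$ (otherwise $\cb(a/K)\subseteq k$, contradicting $a\nind_k K$), and $\qftp(b/k)$ is rational. By exchange applied to $p$, $a\notin\acl(kb)$ forces $b\notin\acl(ka)$. Hence, by Fact~\ref{fact:cbp}, $\qftp(b/k(a))$ (equivalently its nonforking extension over $\acl(ka)$) is a nonalgebraic rational type qf-internal to $\C$, and therefore nonorthogonal to $\C$.

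On the other hand $b\in\acl(k,a,a_2,\dots,a_m)$ with every $a_i$ realising $p$, and $p\perp\C$. A semi-minimal analysis of $p$ together with the Zilber dichotomy for minimal types in $\acfa_0$ (\cite{acfa}) shows $p$ is analysable in one-based minimal types: any non-one-based minimal layer of the analysis would be nonorthogonal to $\C$, and this propagates up to contradict $p\perp\C$. Analysability in one-based minimal types passes to tuples of realisations, to arbitrary field extensions of the base, and to types almost internal to an analysable type; so $\qftp(a,a_2,\dots,a_m/k)$, then $\qftp(b/k)$ (as $b$ is algebraic over those realisations), then $\qftp(b/\acl(ka))$ are all analysable in one-based minimal types, hence orthogonal to $\C$. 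This contradicts the conclusion of the previous paragraph, so $p$ must be minimal, completing the argument.

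The step I expect to be the crux is the orthogonality transfer in the last paragraph. The temptation is to ``peel off'' the realisations $a,a_2,\dots,a_m$ witnessing $b\in\acl(k,a,a_2,\dots,a_m)$ and locate a single realisation of $p$ that forks with the fixed field over a nonforking base --- but this fails, since those realisations need not be independent over $k$ and a forking extension of a type orthogonal to $\C$ need not remain orthogonal to $\C$. The fix is to replace orthogonality to $\C$ by the more robust property of analysability in one-based minimal types, which is inherited by all extensions and by tuples; this is where the Zilber dichotomy for (not necessarily rational) minimal types from~\cite{acfa} is used. Everything else --- the canonical base property (Fact~\ref{fact:cbp}), Theorem~\ref{thm:zd}, and~\cite[Proposition~2.11]{qfint} --- is already available.
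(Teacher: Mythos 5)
Your global architecture coincides with the paper's: dispatch $p\not\perp\C$ via exchange and \cite[Proposition~2.11]{qfint} to land in case~(2), and in the case $p\perp\C$ prove minimality and then quote Theorem~\ref{thm:zd} for one-basedness. The set-up of the minimality argument (taking $e$ with $k\cdot\cb(a/K)=k(e)$, using exchange to reduce to $e\in\acl(ka)$, and playing the CBP against orthogonality) is also the right idea. But the step you yourself flag as the crux is a genuine gap. Your claim that $p\perp\C$ forces $p$ to be analysable in one-based minimal types is justified by the assertion that a non-one-based (hence, by the dichotomy, nonorthogonal to $\C$) minimal layer of a semi-minimal analysis ``propagates up'' to give $p\not\perp\C$. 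That propagation fails for exactly the reason you identify when rejecting the ``peeling off'' strategy: if $\tp(a_{i+1}/ka_1\cdots a_i)\nind$ forks with a tuple from $\C$ over some $B\supseteq ka_1\cdots a_i$ with $a_{i+1}\ind_{ka_1\cdots a_i}B$, that $B$ contains $a_1,\dots,a_i$, which fork with $a$ over $k$, so $B$ does not witness $\tp(a/k)\not\perp\C$. Nonorthogonality of an \emph{upper} layer of an analysis, taken over a forking-extended base, does not descend to the type itself. And the claim is not merely unproved but false without exchange: the standard example is a rank-two type whose first layer is a trivial (hence one-based) minimal type and whose second layer is $\C$-internal over the first coordinate; such a type can be orthogonal to $\C$ while admitting no analysis in one-based minimal types. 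Using exchange to rescue the claim would be circular, since minimality is what is being proved.

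The paper bridges precisely this gap with two devices your argument lacks. First, $K$ is chosen so that $\dim\loc(a/K)$ is \emph{maximal} among witnesses to nonminimality; this maximality shows that $\loc(a/E_0)=\loc(a/k)$ for $E_0:=\acl(ke)\cap\acl(ka)$, i.e.\ $a\ind_kE_0$, so that $\qftp(a/E_0)$ is a \emph{nonforking} extension of $p$ and therefore genuinely inherits orthogonality to $\C$. Second, the CBP is invoked not in the form of Fact~\ref{fact:cbp} (internality of $\qftp(e/k(a))$, which lives over the useless forking base $\acl(ka)$) but in Chatzidakis's strengthened form \cite[Theorem~2.1]{zoe-cbp}: $\tp(e/E_0)$ is already almost $\C$-internal over the intersection $E_0$. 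The contradiction with $\qftp(a/E_0)\perp\C$ is then derived over $E_0$, where orthogonality actually holds. If you want to repair your proof, you should replace the analysability claim by this descent of the canonical base property to $\acl(ke)\cap\acl(ka)$, together with the maximal-dimension choice of $K$ that makes $E_0$ a nonforking base for $a$.
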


\begin{proof}
Our proof is inspired by the main idea of~\cite[Proposition~6.1]{moosa-pillay2014}, which was about finite rank complete types in stable theories satisfying the canonical base property, and was itself inspired by an argument appearing in~\cite[Lemma~2.7]{COP} involving maximal coverings of algebraic dimension zero compact K\"ahler manifolds.
The canonical base property for $\acfa_0$ (Fact~\ref{fact:cbp}) is again a key ingredient.

If $p\not\perp\C$ then, by~\cite[Proposition~2.11]{qfint}, there is a rational map $f:p\to q$ where $q\in S_{\qf}(k)$ is a nonalgebraic rational type that is qf-internal to~$\C$.
Exchange forces~$f$ to be finite to-one.

So we may assume that $p\perp\C$.
We show that in this case~$p$ is minimal.
We then get one-basedness for free from the Zilber dichotomy (Theorem~\ref{thm:zd}).

Let $a\models p$ be arbitrary and set $V=\loc(a/k)$ to be the Zariski locus.
Note that, by rationality of~$p$ and absolute irreducibility of~$V$, for any difference field extension $K\supseteq k$, we have $a\ind_kK$ if and only if the $\loc(a/K)=V$.
It suffices to show, therefore, that whenever $K=K^{\alg}$  is a difference field extension of $k$, with $\loc(a/K)\neq V$, then $\dim\loc(a/K)=0$.
Choosing such $K$ so that $\dim\loc(a/K)$ is maximal, and setting $W:=\loc(a/K)\subsetneq V$, it suffices to prove that $\dim W=0$.
Let~$e$ be such that $k\cdot\cb(a/K)=k(e)=k\langle e\rangle$, as given by Lemma~\ref{lem:cbred}(a).
It suffices to show that $a\in\acl(ke)$.
Note that $e\notin k$ since $W\subsetneq V$, and that $\qftp(e/k)$ is rational.
Hence, by exchange, we reduce further to showing that $e\in\acl(ka)$.

Suppose, toward a contradiction, that $e\notin\acl(ka)$.
Consider
\[
E_0:=\acl(ke)\cap\acl(ka).
\]
We claim that $\qftp(a/E_0)\perp\C$.
As $\cb(a/K)\notin E_0$ we have, by~\cite[Lemma~2.5(a)]{qfint}, that 
$a\nind_{E_0}K$, and hence $a\nind_{E_0}e$.
Because $E_0$ and $E_0(e)$ are difference fields, it follows that
\[
V\supseteq \loc(a/E_0)\supsetneq \loc(a/E_0(e))=W,
\]
where the final equality uses the fact that $W$ is defined over $k(e)$, see for example Lemma~\ref{lem:cbred}(b).
As $\loc(a/E_0)$ is absolutely irreducible, the maximal choice of~$\dim W$ forces $\loc(a/E_0)=V$.
Hence $a\ind_kE_0$.
Since $p=\qftp(a/k)$ is $\C$-orthogonal, we conclude that 
$\qftp(a/E_0)\perp\C$.

On the other hand, the CBP, namely Fact~\ref{fact:cbp}, tells us that $\qftp(e/k(a))$ is qf-internal to $\C$.
In fact, the CBP was shown by Zo\'e Chatzidakis to yield a stronger conclusion, namely that the complete type of~$e$ over~$\acl(ke)\cap\acl(ka)$, so $\tp(e/E_0)$, is also {\em almost} $\C$-internal.
This is~\cite[Theorem~2.1]{zoe-cbp} in the general setting of supersimple theories, and Proposition~3.1 of that paper in the particular case of $\acfa_0$.
So, there is some difference field extension $F\supseteq E_0$ such that 
$e\ind_{E_0}F$ and $e\in\acl(F\C)$.
We can choose~$F$ so that $a\ind_{E_0e}F$.
Hence $a\ind_{E_0}F$.
Since $a\nind_{E_0}e$, this forces $a\nind_Fe$.
That $e\in\acl(F\C)$ therefore implies that $a\nind_F c$ for some finite tuple~$c$ from~$\C$.
This witnesses that $\qftp(a/E_0)\not\perp\C$, a contradiction.
\end{proof}

Here is the geometric formulation:

\begin{corollary}
\label{cor:zdexchange}
Suppose $(V,\phi)$ is a strongly primitive rational $\sigma$-variety over $(k,\sigma)$.
Then exactly one of the following holds:
\begin{itemize}
\item[(1)]
$(V,\phi)$ is simple and condition~(1) of Corollary~\ref{cor:zd} holds, or 
\item[(2)]
there exists a dominant equivariant rational map $(V,\phi)\dto(W,\psi)$ over~$k$ with $\dim W=\dim V$ and such that $(W,\psi)$ is isotrivial.
\end{itemize}
\end{corollary}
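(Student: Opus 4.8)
The plan is to transport Theorem~\ref{thm:nonorth} across the dictionary between rational $\sigma$-varieties and rational types that has been assembled in the preceding sections; essentially no new mathematical work is needed, only bookkeeping.

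I would first let $p\in S_{\qf}(k)$ be the generic quantifier-free type of $(V,\phi)$, and assume $\dim V>0$ (the case $\dim V=0$ is degenerate, and condition~(1) of the statement holds there trivially, since a $0$-dimensional $(V_0,\phi)$ cannot carry a rich invariant family --- canonicity is incompatible with $0$-dimensional fibres). Since $(V,\phi)$ is strongly primitive, Lemma~\ref{exchange-geom} shows that $p$ satisfies exchange, so Theorem~\ref{thm:nonorth} applies and exactly one of the following holds: (a)~$p$ is minimal and one-based; or (b)~there is a finite-to-one rational map $p\to q$ with $q\in S_{\qf}(k)$ rational and qf-internal to~$\C$.

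In case~(a) I would translate term by term. Minimality of~$p$ is equivalent to simplicity of $(V,\phi)$ by Proposition~\ref{prop:min-simple}, and one-basedness of~$p$ is equivalent, by Proposition~\ref{prop:1b-geo}, to the assertion that no irreducible invariant subvariety $V_0$ of some $(V^n,\phi)$ projecting dominantly onto each coordinate admits a rich invariant family --- that is, to condition~(1) of Corollary~\ref{cor:zd}. Together these give exactly case~(1) of the present statement. In case~(b) I would realise $q$ as the generic quantifier-free type of a rational $\sigma$-variety $(W,\psi)$ over~$k$ (every rational type arises this way, by Section~\ref{subsect:rt}), so that the rational map $p\to q$ is the restriction of a dominant equivariant rational map $(V,\phi)\dto(W,\psi)$; that this map is finite-to-one means precisely $\dim W=\dim V$ (in particular $\dim W>0$), and qf-internality of~$q$ to~$\C$ is equivalent to isotriviality of $(W,\psi)$ by~\cite[Proposition~3.6]{qfint}. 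This is case~(2). Finally, since under these equivalences each of the geometric conditions~(1) and~(2) is the exact image of alternative~(a), respectively~(b), of Theorem~\ref{thm:nonorth}, the ``exactly one'' clause is inherited directly from that theorem.

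I do not expect a genuine obstacle here: the content is entirely contained in Theorem~\ref{thm:nonorth} and in the already-established correspondences (strong primitivity~$\leftrightarrow$~exchange, simple~$\leftrightarrow$~minimal, no rich families~$\leftrightarrow$~one-based, isotrivial~$\leftrightarrow$~qf-internal, finite-to-one~$\leftrightarrow$~equal dimension). The only points needing care are the degenerate $0$-dimensional case and checking that ``$q$ positive-dimensional with $p\to q$ finite-to-one'' corresponds to a $W$ of the same positive dimension as~$V$ --- both of which are immediate from the definitions recalled in Section~\ref{subsect:rt}.
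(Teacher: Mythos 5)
Your proof is correct and is essentially the paper's own argument: the paper's proof consists of exactly the remark that the corollary is the algebraic-dynamics translation of Theorem~\ref{thm:nonorth} via Lemma~\ref{exchange-geom}, with the dictionary entries (simple~$\leftrightarrow$~minimal via Proposition~\ref{prop:min-simple}, no rich families~$\leftrightarrow$~one-based via Proposition~\ref{prop:1b-geo}, isotrivial~$\leftrightarrow$~qf-internal, finite-to-one~$\leftrightarrow$~equidimensional) left implicit. You have simply spelled out that dictionary, which is fine.
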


\begin{proof}
This is just the by now familiar algebraic dynamics translation of Theorem~\ref{thm:nonorth}, using the geometric characterisation of exchange given by Lemma~\ref{exchange-geom}.
\end{proof}

Theorem~\ref{thm:primitive} of the Introduction is the special case of this corollary when $k\subseteq\C$.

\bigskip
\section{Bounding nonminimality}
\label{sec:nmdeg}

\noindent
We continue to work over an algebraically closed inversive difference field $(k,\sigma)$ of characteristic zero.

The proof of (i)$\implies$(ii) in Proposition~\ref{prop:nonmin-geo} gives a bit more information:
if $p\in S_{\qf}(k)$ is a rational type that is not minimal then there are realisations $a_1,\dots,a_d$ of~$p$ such that~$p$ has a nonalgebraic forking extension to $S_{\qf}(ka_1,\dots,a_d)$.
This allows the following natural measure of nonminimality, extending the notion first introduced in~\cite{nmdeg} for finite rank types in stable theories:

\begin{definition}\label{def:nmdeg}
Suppose~$p\in S_{\qf}(k)$ is a nonalgebraic and nonminimal rational type.
Then the \emph{degree of nonminimality of~$p$}, denoted by $\nmdeg(p)$, is 
the least $d\geq 0$ for which there exist realisations $a_1,\dots,a_d$ of~$p$ 
such that~$p$ has a nonalgebraic forking extension to $S_{\qf}(ka_1,\dots,a_d)$.

If $p$ is algebraic or minimal then we set $\nmdeg(p):=0$.
\end{definition}

\begin{lemma}
\label{morley}
For $d>0$, any witness to $\nmdeg(p)=d$ is a realisation of $p^{(d)}$.
\end{lemma}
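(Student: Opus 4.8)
The plan is a short argument by contradiction on the minimality of~$d$ in the definition of $\nmdeg$. Suppose $\bar a=(a_1,\dots,a_d)$ witnesses $\nmdeg(p)=d$ but does not realise $p^{(d)}$. Since the coordinates of~$\bar a$ are realisations of~$p$, failure to realise $p^{(d)}$ means precisely that $(a_1,\dots,a_d)$ is not an independent tuple over~$k$; hence, by monotonicity of forking, some coordinate $a_i$ forks over~$k$ with the tuple $\bar a'$ consisting of the remaining $d-1$ coordinates, and $k\bar a=k\bar a'a_i$. Fix also a nonalgebraic forking extension of~$p$ to $S_{\qf}(k\bar a)$ and a realisation $b$ of it, so that $b\models p$, $b\nind_k\bar a$, and $b\notin\acl(k\bar a)$.

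I would then split according to whether $a_i\in\acl(k\bar a')$. If it is, then $\acl(k\bar a)=\acl(k\bar a')$; since algebraic closure creates no new forking this gives $b\nind_k\bar a'$ (were $b\ind_k\bar a'$, it would follow that $b\ind_k\acl(k\bar a')\ni a_i$, hence $b\ind_k\bar a$, a contradiction), and of course $b\notin\acl(k\bar a')$, so $\qftp(b/k\bar a')$ is a nonalgebraic forking extension of~$p$ over $k\bar a'$, where $\bar a'$ is a $(d-1)$-tuple of realisations of~$p$; this contradicts $\nmdeg(p)=d$. If instead $a_i\notin\acl(k\bar a')$, then $\qftp(a_i/k\bar a')$ is itself a nonalgebraic (as $a_i\notin\acl(k\bar a')$) forking (as $a_i\nind_k\bar a'$) extension of $p=\qftp(a_i/k)$, so again $\bar a'$ witnesses $\nmdeg(p)\le d-1$, a contradiction.

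Since both cases are impossible, $\bar a$ realises $p^{(d)}$. I do not expect any genuine obstacle here: everything rests on elementary forking calculus in the simple theory $\acfa_0$ (monotonicity, symmetry, and the fact that algebraic closure adds no forking) together with unwinding the definition of $\nmdeg$; one just has to check that each type produced is quantifier-free---which it is, being a $\qftp$ of a tuple over a difference field---and genuinely restricts to~$p$ over~$k$. If any step deserves emphasis it is the second case, where the witnessing realisation of the shorter forking extension is simply the coordinate $a_i$ that was removed.
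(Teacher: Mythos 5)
Your proof is correct and follows essentially the same route as the paper's: the same dichotomy on whether the offending coordinate $a_i$ is algebraic over the remaining ones, with the algebraic case handled by dropping $a_i$ (using $\acl(k\bar a)=\acl(k\bar a')$) and the nonalgebraic case handled by noting that $\qftp(a_i/k\bar a')$ is itself a nonalgebraic forking extension witnessed by fewer realisations. The only cosmetic difference is that the paper runs the argument over initial segments $(a_1,\dots,a_{i-1})$ rather than the full complementary subtuple.
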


\begin{proof}
Suppose $a=(a_1,\dots,a_d)$ witnesses that $\nmdeg(p)=d$.
If $a_i\nind_k(a_1,\dots,a_{i-1})$ and $a_i\notin\acl(ka_1,\dots,a_{i-1})$ then  $\qftp(a_i/ka_1\dots a_{i-1})$ would witness that the degree of nonminimality is $<d$.
But if $a_i\in\acl(ka_1,\dots,a_{i-1})$ then we could drop it from the sequence, and what is left would still witness $\nmdeg(p)$, again contradicting that~$d$ is minimal.
So we must have $a_i\ind_k(a_1,\dots,a_{i-1})$ for each $i=1,\dots, d$, which implies that $a\models p^{(d)}$.
\end{proof}

\begin{lemma}\label{lem:exchange}
Suppose~$p\in S_{\qf}(k)$ is a nonalgebraic and nonminimal rational type.
If $\nmdeg(p)>1$ then $p$ satisfies exchange.
\end{lemma}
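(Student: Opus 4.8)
The plan is to argue by contradiction in a sharp, local form: assuming $p$ fails exchange, I will exhibit a \emph{single} realisation of $p$ over which $p$ has a nonalgebraic forking extension, which forces $\nmdeg(p)\le 1$ and hence — since $p$ is nonalgebraic and nonminimal, so that $\nmdeg(p)\ge 1$ — gives $\nmdeg(p)=1$, contrary to hypothesis. Concretely, suppose $a\models p$ and $b\in\acl(ka)\setminus k$ with $\qftp(b/k)$ rational, but $a\notin\acl(kb)$. The first (routine) observation is that $a\nind_k b$: since $k=\acl(k)$ and $b\notin k$ we have $\su(b/k)>0$, while $b\in\acl(ka)$ gives $\su(b/ka)=0$, so $b\nind_k a$, and symmetry of nonforking in $\acfa_0$ yields $a\nind_k b$.

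The one real move is the second step: trade the parameter $b$, which realises the rational type $\qftp(b/k)$ but not $p$ itself, for a genuine realisation of $p$. For this I would pick $a'$ realising the nonforking extension (to $\acl(kab)$) of a completion of $\qftp(a/\acl(kb))$, using that quantifier-free types over algebraically closed inversive difference fields are stationary — the same fact that makes $p^{(d)}$ well defined, as recalled in Section~\ref{subsect:rt}. This produces $a'$ with $a'\ind_{\acl(kb)}a$ and $\qftp(a'/kb)=\qftp(a/kb)$; in particular $a'\models p$, and $a'\notin\acl(kb)$ because $a\notin\acl(kb)$.

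The third step is a short $\su$-rank computation. Because $b\in\acl(ka)$ we have $\acl(ka)=\acl\big(\acl(kb)a\big)$; combining this with $a'\ind_{\acl(kb)}a$, with $\qftp(a'/kb)=\qftp(a/kb)$, and with the invariance of $\su$ under algebraic closure of the parameters,
\[
\su(a'/ka)=\su\big(a'/\acl(kb)a\big)=\su\big(a'/\acl(kb)\big)=\su(a'/kb)=\su(a/kb).
\]
Now $a\notin\acl(kb)$ forces $\su(a/kb)\ge 1$, so $a'\notin\acl(ka)$; and $a\nind_k b$ forces $\su(a/kb)<\su(a/k)=\su(p)=\su(a'/k)$, so $a'\nind_k a$. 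Therefore $\qftp(a'/ka)$ is a nonalgebraic forking extension of $p$ over the single realisation $a\models p$, which is the desired contradiction.

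I expect the only point requiring care to be the appeal to stationarity in the second step: one must pass to the \emph{algebraically closed} base $\acl(kb)$ (the non-closed field $kb$ will not do) before extracting a nonforking extension, and one must use that $\su$-rank depends only on the quantifier-free type over a difference field and is unchanged when parameter sets are replaced by their algebraic closures. Everything else is symmetry of forking together with the monotonicity $\su(x/A)\ge\su(x/B)$ for $A\subseteq\acl(B)$.
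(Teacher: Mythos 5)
Your overall strategy is the contrapositive of the paper's own argument (which follows Fact~2.2 of~\cite{nmdeg2}): produce a second realisation $a'$ of $p$ so that $\qftp(a'/ka)$ is a nonalgebraic forking extension of $p$ over a single realisation, contradicting $\nmdeg(p)>1$. The steps establishing $a\nind_kb$, the choice of $a'$ independent from $a$ over $\acl(kb)$, and the deduction $a'\notin\acl(ka)$ are all sound (the last because algebraicity, unlike $\su$-rank, is visibly detectable from the quantifier-free type).

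The gap is in the last link of your $\su$-rank chain and in the equality $\su(p)=\su(a'/k)$. You justify $\su(a'/kb)=\su(a/kb)$ and $\su(a'/k)=\su(a/k)$ by equality of \emph{quantifier-free} types, but whether $\su$-rank is determined by the quantifier-free type is exactly the open Question~\ref{question:qfmin=min} discussed at the start of Section~\ref{sect:qfmin}; you may not assume it. Two cheap repairs are available. First, choose $a'$ realising the full complete type $\tp(a/kb)$ (not merely ``a completion'' of $\qftp(a/\acl(kb))$, which need not be the completion realised by $a$) with $a'\ind_{kb}a$; then both $\su$-equalities are immediate, and this is what the paper does. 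Second, bypass $\su$-rank entirely: forking itself \emph{is} determined by quantifier-free data in $\acfa$ (algebraic disjointness of the generated inversive difference fields), so $\qftp(a'/kb)=\qftp(a/kb)$ together with $a\nind_kb$ already yields $a'\nind_kb$; and since $b\in\acl(ka)$, independence $a'\ind_ka$ would give $a'\ind_k\acl(ka)\ni b$, a contradiction, whence $a'\nind_ka$. Either repair lands you exactly where the paper's proof does.
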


\begin{proof}
We follow the proof of Fact~2.2 in~\cite{nmdeg2}.

Suppose $a\models p$ and $b\in\acl(ka)\setminus k$.
Note that $a\nind_kb$.
Let $a'\models\tp(a/kb)$ such that $a'\ind_{kb}a$.
Then $a'\nind_k{b}$, and as $b\in\acl(ka)$, we must have that $a'\nind_ka$.
Hence $\qftp(a'/ka)$ is a forking extension of~$p$ to a single realisation of~$p$.
Since $\nmdeg(p)>1$, this forces $a'\in\acl(ka)$.
As $a'\ind_{kb}a$, it must be that $a'\in\acl(kb)$, and hence $a\in\acl(kb)$, as desired.
\end{proof}

\begin{proposition}\label{prop:redtowoc-nmdeg}
Suppose $p\in S_{\qf}(k)$ is a rational type.
Let $d>0$ be such that $p^{(d)}$ is not weakly orthogonal to~$\C$.
Then $\nmdeg(p)\leq d$.
\end{proposition}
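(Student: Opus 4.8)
The plan is to exploit the failure of weak orthogonality in order to produce, over~$k$ together with at most~$d$ realisations of~$p$, a single realisation of~$p$ that forks over~$k$ and is not algebraic over those realisations. We may assume throughout that~$p$ is nonalgebraic and nonminimal, since otherwise $\nmdeg(p)=0$ and there is nothing to prove. Note that then $\dim(p)\geq 2$: indeed $\dim(p)\geq 1$ by nonalgebraicity, and if $\dim(p)=1$ then~$p$ is the generic quantifier-free type of a rational $\sigma$-curve, which is simple, and hence~$p$ would be minimal by Proposition~\ref{prop:min-simple}.

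Fix $(a_1,\dots,a_d)\models p^{(d)}$. By the characterisation of $\C$-orthogonality recalled from \cite[Proposition~2.7]{qfint} (that $a\ind_k\C$ iff $k(a)\cap\C\subseteq k^{\alg}$), the hypothesis that $p^{(d)}$ is not weakly orthogonal to~$\C$ yields $e\in k(a_1,\dots,a_d)\cap\C$ with $e\notin k$. As~$k$ is algebraically closed, $e$ is transcendental over~$k$; and being a fixed point, $\qftp(e/k)$ is rational, indeed the generic type of the fixed field, so $\su(e/k)=1$. Now let $i\in\{1,\dots,d\}$ be least with $e\in\acl(ka_1\cdots a_i)$, and put $L:=\acl(ka_1\cdots a_{i-1})$. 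Then $e\in\acl(La_i)\setminus L$, so~$e$ is transcendental over~$L$ with $\su(e/L)=1$; moreover $a_i\ind_k L$ (since $a_i\ind_k(a_1,\dots,a_{i-1})$), so $\dim\qftp(a_i/L)=\dim(p)$ and $\su(a_i/L)=\su(a_i/k)$. Two facts follow by transcendence-degree bookkeeping: first, $a_i\nind_L e$, since $e\ind_L a_i$ would combine with $e\in\acl(La_i)$ to give $e\in\acl(L)=L$; second, $a_i\notin\acl(Le)$, since $a_i\in\acl(Le)$ would give $\acl(La_i)=\acl(Le)$ (using $e\in\acl(La_i)$), hence $\trdeg(L(a_i)/L)=\trdeg(L(e)/L)=1$, forcing $\dim(p)=1$, contrary to assumption.

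Next I would choose $a'$ with $\qftp(a'/Le)=\qftp(a_i/Le)$ and $a'\ind_{Le}a_i$. Then $a'\models p$, and $e\in\acl(La')$: the condition $e\in\acl(La_i)$ depends only on $\qftp(a_i/Le)$, since it says $\trdeg(L(e)(a_i)/L(e))=\dim(p)-1$, and this transfers to~$a'$. Three statements then finish the proof. First, $a'\nind_L a_i$: otherwise $a'\ind_L\acl(La_i)\ni e$ gives $a'\ind_L e$, contradicting $\qftp(a'/Le)=\qftp(a_i/Le)$ together with $a_i\nind_L e$. Second, $a'\notin\acl(La_i)$: otherwise $a'\ind_{Le}a_i$ forces $a'\in\acl(Le)$, whence $a_i\in\acl(Le)$ (again a property of the quantifier-free type over~$Le$), contradicting the second fact above. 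Third, $a'\nind_k(a_1,\dots,a_i)$: otherwise transitivity and the invariance of forking under algebraic closure of the base give $a'\ind_L a_i$, contradicting the first statement. Since $\acl(k(a_1,\dots,a_i))=\acl(La_i)$, the last two statements say precisely that $\qftp(a'/k(a_1,\dots,a_i))$ is a nonalgebraic forking extension of~$p$; as $a_1,\dots,a_i$ are $i\leq d$ realisations of~$p$, this gives $\nmdeg(p)\leq d$.

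The delicate point is not any single computation but the bookkeeping that keeps the base of the forking extension equal to~$k$ together with \emph{realisations of~$p$ only}, and of length at most~$d$ --- rather than, say, $k(e)$, or $k$ together with all~$d$ coordinates and~$e$. This is exactly what the choice of the least~$i$ with $e\in\acl(ka_1\cdots a_i)$, followed by passing to the fresh copy~$a'$ over~$Le$, accomplishes. One should check carefully that only the weaker $e\in\acl(La_i)$ is available (not $e\in L(a_i)$), and that the standing hypothesis $\dim(p)\geq 2$ is genuinely used: it is what rules out $a_i\in\acl(Le)$, and hence what makes~$a'$ nonalgebraic over~$La_i$.
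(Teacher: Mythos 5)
Your argument is correct, and it reaches the conclusion by a genuinely different route from the paper. The paper first settles the case $d=1$: non-weak-orthogonality of $p$ itself yields, via the invariant rational function result \cite[Proposition~3.4(a)]{qfint}, a rational map from $p$ onto the generic quantifier-free type of $(\AA^1,\id)$, and one then argues by contradiction that $\nmdeg(p)>1$ would force $p$ to satisfy exchange (Lemma~\ref{lem:exchange}), making that map finite-to-one and contradicting $\dim(p)>1$; the case $d\geq 2$ is reduced to $d=1$ by passing to the nonforking extension $\qftp(a_0/ka_1)$ where $(a_0,a_1)\models p^{(d)}$. You instead bypass both the exchange lemma and the invariant-rational-function machinery: you extract a new fixed-field element $e\in k(a_1,\dots,a_d)\cap\C\setminus k$ directly from \cite[Proposition~2.7]{qfint}, locate the least $i$ with $e\in\acl(ka_1\cdots a_i)$, and manufacture the witnessing forking extension by taking a fresh independent copy $a'$ of $a_i$ over $L(e)$. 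Your verifications are legitimate because, for rational types over difference-field bases, forking and algebraicity are read off from transcendence degrees, so the relevant conditions really do transfer along $\qftp(a'/Le)=\qftp(a_i/Le)$. Both arguments hinge on the same two facts --- that nonminimality of a rational type forces $\dim(p)\geq 2$, and that the fixed-field element contributes exactly one to transcendence degree --- but yours is the more self-contained and direct, while the paper's factoring through the $d=1$ case is what sets up its subsequent remark about whether the bound might be improved to $d-1$.
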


\begin{proof}
The assumption that $p^{(d)}$ is not weakly orthogonal to~$\C$ already rules out~$p$ being algebraic.
We may also assume that $p$ is not minimal, else $\nmdeg(p)=0$ by convention and we are done.

Let $(V,\phi)$ be the positive-dimensional rational $\sigma$-variety over~$k$ that~$p$ is the generic quantifier-free type of.
As $p$ is not minimal, $\dim V>1$.

First consider the case of $d=1$, so that $p\not\perp^w\C$.
Then, by~\cite[Proposition~3.4(a)]{qfint},  $(V,\phi)$ admits a nonconstant invariant rational function.
That is, there is a rational map $f:p\to q$ where $q$ is the generic quantifier-free type of $(\AA^1,\id)$ over~$k$.
If $\nmdeg(p)>1$ then~$p$ satisfies exchange by Lemma~\ref{lem:exchange}.
Hence $f$ is finite-to-one, contradicting $\dim V>1$.

Now suppose $d\geq 2$ and $p^{(d)}\not\perp^w\C$.
If we let $a=(a_0,a_1)\models p^{(d)}$, where $a_0\models p$ and $a_1\models p^{(d-1)}$, then we see that $p':=\qftp(a_0/ka_1)$ is a nonforking extension of~$p$ that is not weakly orthogonal to~$\C$.
Hence, by the previous paragraph, $\nmdeg(p')\leq 1$.
This will be witnessed by some $a'\models p'$ and $q\in S_{\qf}({ka_1a'})$ a nonalgebraic forking extension of~$p'$.
But then~$q$ is a nonalgebraic forking extension of~$p$, and $(a',a_1)$ is a $d$-tuple of realisations of~$p$, so that $\nmdeg(p)\leq d$.
\end{proof}

In the original use of the above strategy, for $\dcf_0$, one obtained a slightly better bound of $d-1$ (assuming $d>1$), see~\cite[Proposition 7.1]{nmdeg}.
We do not know if that is the case here.
The difficulty is that the fibres of a rational map need not be absolutely irreducible, and hence need not have ``sharp" points.

Combining this with our earlier results in~\cite{qfint} on quantifier-free binding groups, we obtain the following bound on the degree of nonminimality.

\begin{theorem}
\label{thm:nmdeg}
Suppose $p\in S_{\qf}(k)$ is rational.
\begin{itemize}
\item[(a)]
$\nmdeg(p)\leq\dim(p)+3$.
\item[(b)]
If $k\subseteq\C$ then $\nmdeg(p)\leq 2$.
\end{itemize}
\end{theorem}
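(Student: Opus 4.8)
The plan is to deduce both parts from Proposition~\ref{prop:redtowoc-nmdeg}, by bounding, for a rational type~$p$, the least~$d$ with $p^{(d)}\not\perp^w\C$. If~$p$ is algebraic or minimal then $\nmdeg(p)=0$, and for~(b) there is nothing to prove when $\nmdeg(p)\leq 1$; so assume $\nmdeg(p)\geq 2$. Then $\nmdeg(p)>1$, so by Lemma~\ref{lem:exchange} $p$ satisfies exchange, and $p$ is not minimal, so Theorem~\ref{thm:nonorth} produces a finite-to-one rational map $f\colon p\to q$ with $q\in S_{\qf}(k)$ rational and qf-internal to~$\C$, where $\dim(q)=\dim(p)=:n$. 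Since~$f$ is finite-to-one, if $\bar a=(a_1,\dots,a_d)\models p^{(d)}$ then $\bar b:=(f(a_1),\dots,f(a_d))\models q^{(d)}$ with $\acl(k\bar a)=\acl(k\bar b)$, so by~\cite[Proposition~2.7]{qfint} one has $p^{(d)}\perp^w\C$ if and only if $q^{(d)}\perp^w\C$. By Proposition~\ref{prop:redtowoc-nmdeg} it therefore suffices to bound the least~$d$ with $q^{(d)}\not\perp^w\C$: by $n+3$ for~(a), and by~$2$ for~(b) when $k\subseteq\C$.

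Write~$q$ as the generic quantifier-free type of an isotrivial rational $\sigma$-variety $(W,\psi)$ with $\dim W=n$, and let $(G,\rho)$ be its binding group from~\cite{qfint}, so~$G$ acts faithfully and birationally on~$W$ and $\shp{(G,\rho)}(\U)=\aut_{\qf}(q/\C)$ acts co\"ordinatewise on~$q(\U)$ and on each $q^{(d)}(\U)$. The key point is that if $q^{(d)}\perp^w\C$ then every realisation of $q^{(d)}$ realises the unique nonforking extension over~$k\cup\C$, so $\aut(\U/k\C)$, and hence $\aut_{\qf}(q/\C)=\shp{(G,\rho)}(\U)$, acts transitively on $q^{(d)}(\U)$; geometrically~$G$ then has a dense orbit on~$W^d$, forcing $\dim G\geq dn$. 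Using the classical bound that a connected algebraic group acting faithfully on an $n$-dimensional variety has dimension at most $n^2+2n=\dim\pgl_{n+1}$ (applied to the identity component of~$G$, and to a regularisation of the action), we get $\dim G\leq n^2+2n$, so $q^{(d)}\perp^w\C$ implies $dn\leq n^2+2n$, i.e.\ $d\leq n+2$. Hence $q^{(n+3)}\not\perp^w\C$ and $\nmdeg(p)\leq n+3=\dim(p)+3$, which is~(a).

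For~(b), assume $k\subseteq\C$; I aim to show $q^{(2)}\not\perp^w\C$. In this case the observation above reverses: if~$h$ is a $G$-invariant rational function on~$W^d$ and $\bar a\models q^{(d)}$, then for every $\tau\in\aut(\U/k\C)$ we have $\tau(h(\bar a))=h(\tau\bar a)=h(\bar a)$, since $\tau|_{q(\U)}\in\aut_{\qf}(q/\C)$ and the coefficients of~$h$ lie in~$k$; thus $h(\bar a)\in\dcl(k\cup\C)=\C$, so~$h$ is $\psi$-invariant. Choosing~$h$ transcendental over~$k$ — possible precisely when~$G$ is not generically transitive on~$W^d$ — this gives a nonconstant invariant rational function on $(W^d,\psi)$, whence $q^{(d)}\not\perp^w\C$. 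So it is enough to prove that the binding group~$G$ is \emph{not} generically $2$-transitive on~$W$ (note $n>1$, as~$q$ is not minimal). Now~$p$, hence~$q$, satisfies exchange, so $(W,\psi)$ is strongly primitive and, being qf-internal, is not in case~(1) of Corollary~\ref{cor:zdexchange}; so by case~(2) together with~\cite[Theorem~5.1]{qfint} (valid because $k\subseteq\C$) there is a finite-to-one equivariant map $(W,\psi)\dto(W',\psi')$ with $\psi'$ the action of a $k$-point~$h_0$ of an algebraic group~$H$ acting faithfully on~$W'$. Replacing~$q$ by the generic type of $(W',\psi')$ — which preserves exchange, qf-internality, nonminimality, and the set of~$d$ with $q^{(d)}\not\perp^w\C$, and for which we may assume $q\perp^w\C$ (else $\nmdeg(p)\leq 1$) — we may take $\psi$ to be translation by $h_0\in H(k)$; then primitivity of $(W,\psi)$ forces~$H$ to act transitively, say $W=H/H_0$. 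Given $(a_1,a_2)\models q^{(2)}$, lift $a_i$ to $h_i\in H$, so that $\sigma(h_i)\in h_0h_iH_0$; a direct computation gives $\sigma(h_1^{-1}h_2)\in H_0(h_1^{-1}h_2)H_0$, so the double coset of $h_1^{-1}h_2$, which lies in $\dcl(ka_1a_2)$, is $\sigma$-fixed. When $n>\dim H_0$ this double coset ranges over a positive-dimensional variety as $a_2$ varies generically over $k(a_1)$, producing a nonalgebraic element of $\C\cap\dcl(ka_1a_2)$, so $q^{(2)}\not\perp^w\C$. The residual case $n\leq\dim H_0$ — where $H$ is large relative to~$W$, as for $\pgl_3$ acting on~$\PP^2$ — is the genuine difficulty: here one must use primitivity and qf-internality of~$q$ to further constrain the transitive action of~$H$ on~$W$ and, ultimately, still exhibit a nonconstant invariant rational function on~$W^2$. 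This step — ruling out generically $2$-transitive binding groups over~$\C$, equivalently $q^{(2)}\not\perp^w\C$ — is the main obstacle, and is exactly what upgrades the crude bound $\dim(p)+3$ of part~(a) to the bound~$2$; everything else is an assembly of Theorem~\ref{thm:nonorth}, Proposition~\ref{prop:redtowoc-nmdeg}, and the binding-group results of~\cite{qfint}.
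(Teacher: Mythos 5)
Your proof follows the paper's skeleton exactly up to the decisive step: reduce to $\nmdeg(p)>1$, get exchange from Lemma~\ref{lem:exchange}, get nonorthogonality to~$\C$ (equivalently a finite-to-one map to a qf-internal type) from Theorem~\ref{thm:nonorth}, and then feed a witness of non-weak-orthogonality of a power of~$p$ into Proposition~\ref{prop:redtowoc-nmdeg}. The paper obtains those witnesses by citing \cite[Theorem~5.7]{qfint} (non-weak-orthogonality of $p^{(\dim(p)+3)}$) and \cite[Corollary~5.3]{qfint} (non-weak-orthogonality of $p^{(2)}$ when $k\subseteq\C$). You instead try to reprove these two facts via the binding group, and both attempts have genuine gaps.

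For part~(a), the ``classical bound that a connected algebraic group acting faithfully on an $n$-dimensional variety has dimension at most $n^2+2n$'' is not a theorem; it is false. For instance, the transformations $(x,y)\mapsto(x,y+g(x))$ with $g$ ranging over polynomials of degree at most~$N$ form a connected commutative algebraic group of dimension $N+1$ acting faithfully and regularly on $\AA^2$, and regularising the action does not help since automorphism groups of quasi-projective varieties contain connected algebraic subgroups of unbounded dimension. The correct input (and the engine behind \cite[Theorem~5.7]{qfint}, as in~\cite{nmdeg}) is the bound on \emph{generic multiple transitivity}: a connected algebraic group with a dense orbit on $W^d$, $\dim W=n$, has $d\leq n+2$. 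That is a nontrivial theorem of Popov and is not obtained by the dimension count $\dim G\geq dn$ together with an a priori bound on $\dim G$; so as written this step is unjustified, even though the conclusion $d\leq n+2$ is rescuable by citing the right result. For part~(b), you explicitly leave open the ``residual case $n\leq\dim H_0$,'' calling it the main obstacle. That case is not residual: it is precisely the content of \cite[Corollary~5.3]{qfint}, namely ruling out generically $2$-transitive binding group actions in the autonomous case, and it is where the hypothesis $k\subseteq\C$ must do real work --- Section~\ref{sec-2trans} of the paper exhibits, over a non-fixed base, an isotrivial type whose binding group ($\pgl_n$ on $\PP^{n-1}$) is $2$-transitive and whose degree of nonminimality is~$2$, so the statement you need fails without the autonomous hypothesis. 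As it stands, part~(b) is not proved, and part~(a) rests on a false lemma.
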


\begin{proof}
We may assume that~$p$ is not minimal nor algebraic.
We may also assume that $\nmdeg(p)>1$.
By Lemma~\ref{lem:exchange}, $p$ satisfies exchange.
As~$p$ is not minimal, Theorem~\ref{thm:nonorth} implies, in particular, that~$p$ is nonorthogonal to~$\C$.
Now we apply~\cite[Theorem~5.7]{qfint} which tells us that nonorthogonality to~$\C$ is witnessed by non-weak-orthgonality of $p^{(n+3)}$ to~$\C$, where $n=\dim(p)$.
Hence, Proposition~\ref{prop:redtowoc-nmdeg} implies that $\nmdeg(p)\leq n+3$.

In the autonomous case when $k\subseteq\C$, nonorthognality to~$\C$ is witnessed by non-weak-orthgonality of $p^{(2)}$, by \cite[Corollary~5.3]{qfint}, so that Proposition~\ref{prop:redtowoc-nmdeg} implies $\nmdeg(p)\leq 2$.
\end{proof}

What we actually expect, based on what was shown in~\cite{nmdeg2} for $\dcf_0$, is that $\nmdeg(p)\leq 2$ in general and $\nmdeg(p)\leq 1$ in the autonomous case.
The latter we would obtain automatically, if we could improve Proposition~\ref{prop:redtowoc-nmdeg} to a strict inequality.
But the former, namely an absolute bound of~$2$ for the degree of nonminimality in general, is further out of reach.

A non-autonomous rational type of $\nmdeg$~$2$ is exhibited in~$\S$\ref{sec-2trans} below.

For a geometric formulation of this theorem, it is useful to introduce the following refinement of the notion of simplicity (Definition~\ref{def:simple}).

\begin{definition}
\label{def:wsimple}
Suppose $(V,\phi)$ and $(W,\psi)$ are rational $\sigma$-varieties over~$k$.
We say that $(V,\phi)$ is {\em $(W,\psi)$-simple} if any proper irreducible invariant subvariety of $(V\times W,\phi\times\psi)$ projecting dominantly onto both~$V$ and~$W$ has dimension $\dim W$.
So $(V,\phi)$ is simple (according to Definition~\ref{def:simple}) if it is $(W,\psi)$-simple for all $(W,\psi)$.
\end{definition}

\begin{remark}
The proof of Proposition~\ref{prop:nonmin-geo} shows that if $(V,\phi)$ is not simple then it is not $(V^d,\phi)$-simple for some $d>0$.
(The action of $\phi$ on cartesian powers of~$V$ is taken to be co-ordinatewise.)
Note also that $(V^d,\phi)$-simplicity implies $(V^k,\phi)$-simplicity for $k\leq d$.
Indeed, if $Z\subseteq V^{k+1}$ witnesses the failure of $(V^k,\phi)$-simplicity then $Z\times V^{d-k}$ witnesses the failure of $(V^d,\phi)$-simplicity.
\end{remark}

We obtain the following geometric explanation of degree of nonminimality:

\begin{proposition}
\label{prop:nmdeg-geom}
Suppose $(V,\phi)$ is a rational $\sigma$-variety over~$k$ that is not simple, and $p\in S_{\qf}(k)$ is its generic quantifier-free type.
Then $\nmdeg(p)$ is the least positive integer $d$ such that $(V,\phi)$ is not $(V^d,\phi)$-simple.
\end{proposition}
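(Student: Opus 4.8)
The plan is to identify the two quantities — $\nmdeg(p)$ and the least $d$ with $(V,\phi)$ not $(V^d,\phi)$-simple — by going through the model-theoretic reformulation in Proposition~\ref{prop:nonmin-geo}, much as that proposition itself translated non-minimality into a geometric statement. First I would observe that both quantities are finite and positive. Finiteness of $\nmdeg(p)$ is the content of the discussion opening this section (or of Theorem~\ref{thm:nmdeg}), and positivity holds because~$p$ is not minimal (hence not algebraic), so by convention $\nmdeg(p)>0$. On the geometric side, the Remark following Definition~\ref{def:wsimple} records that if $(V,\phi)$ is not simple then it fails to be $(V^d,\phi)$-simple for some $d>0$; call the least such $d$ the value $d_0$.

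Next I would prove $\nmdeg(p)\le d_0$. Suppose $(V,\phi)$ is not $(V^{d_0},\phi)$-simple, witnessed by a proper irreducible invariant $Z\subseteq (V\times V^{d_0},\phi\times\phi)$ projecting dominantly onto each factor with $\dim Z>d_0\dim V=\dim(V^{d_0})$. Take a generic point $(a,b)$ of $(Z,(\phi\times\phi)|_Z)$ over~$k$; then $b=(b_1,\dots,b_{d_0})$ is a generic point of $(V^{d_0},\phi)$, so $b\models p^{(d_0)}$, and dominance of $Z\to V$ gives $a\models p$. Exactly as in the proof of (ii)$\Rightarrow$(i) of Proposition~\ref{prop:nonmin-geo}, properness of $Z$ forces $a\nind_k b$ while $\dim Z>\dim(V^{d_0})$ forces $a\notin k(b)^{\alg}=\acl(kb)$ (using that $b\in (V^{d_0},\phi)^\sharp$, so that $k(b)^{\alg}=\acl(kb)$). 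Thus $\qftp(a/kb)$ is a nonalgebraic forking extension of~$p$ over $k(b_1,\dots,b_{d_0})$, witnessing $\nmdeg(p)\le d_0$.

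For the reverse inequality $\nmdeg(p)\ge d_0$, suppose $\nmdeg(p)=d$, witnessed by realisations $a_1,\dots,a_d$ of~$p$ together with a nonalgebraic forking extension $q$ of~$p$ over $K:=k(a_1,\dots,a_d)$; by Lemma~\ref{morley} we may take $(a_1,\dots,a_d)\models p^{(d)}$. Let $a\models q$; then $a\models p$, $a\nind_k(a_1,\dots,a_d)$, and $a\notin\acl(K)$. Set $b:=(a_1,\dots,a_d)$, $W:=\loc(b/k)$, which — $b$ being a tuple of realisations of~$p$, hence a $\sharp$-point — is all of $V^d$ with its diagonal dynamics, and set $Z:=\loc((a,b)/k)\subseteq V\times V^d$; this is an irreducible invariant subvariety projecting dominantly onto $V$ and onto $V^d$. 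Then $a\nind_k b$ forces $Z\subsetneq V\times V^d$, and $a\notin\acl(kb)=\acl(K)$, i.e.\ $a\notin k(b)^{\alg}$, forces $\dim Z>\dim V^d$. Hence $(V,\phi)$ is not $(V^d,\phi)$-simple, so $d_0\le d=\nmdeg(p)$. Combining the two inequalities gives $\nmdeg(p)=d_0$.

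**Main obstacle.** The only genuinely delicate point is the bookkeeping that $b=(a_1,\dots,a_d)$, being a tuple of realisations of~$p$, really does satisfy $\loc(b/k)=V^d$ with the co-ordinatewise $\phi$-action, and that therefore $k(b)^{\alg}=\acl(kb)$ — this is where rationality of $p^{(d)}$ and the fact that independent realisations of~$p$ give a Zariski-generic point of $V^d$ enter; it is exactly the mechanism already used in Proposition~\ref{prop:nonmin-geo}, so I expect no real difficulty, only a careful invocation of the relevant parts of Section~\ref{subsect:rt}. One should also note, for the $\nmdeg(p)\le d_0$ direction, that one must know the witnessing $d_0$-tuple $b$ consists of independent realisations of~$p$; this is automatic since $b$ is a generic point of $(V^{d_0},\phi)$ and $p^{(d_0)}$ is by definition the generic quantifier-free type of that $\sigma$-variety.
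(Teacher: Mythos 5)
Your proposal is correct and follows essentially the same route as the paper: one direction takes a generic point of a witnessing invariant subvariety $Z\subseteq V^{d+1}$ to produce a nonalgebraic forking extension, and the other uses Lemma~\ref{morley} to get a witness realising $p^{(d)}$ and takes $Z$ to be the locus of $(a,b)$ over $k$. The details you flag as delicate (that $\loc(b/k)=V^d$ and $\acl(kb)=k(b)^{\alg}$ for a tuple of independent realisations) are handled the same way in the paper's argument.
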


\begin{proof}
We first show that if $(V,\phi)$ is not $(V^d,\phi)$-simple, then $\nmdeg(p)\leq d$.
Let $Z\subseteq V^{d+1}$ witness the failure of $(V^d,\phi)$-simplicity.
Let $(a_0,a_1,\dots,a_d)$ be a generic point of the induced rational $\sigma$-variety $(Z,\phi|_Z)$.
That $Z$ projects dominantly onto both the first $V$-co\"ordinate and the last $V^d$-co\"ordinates implies that $a_0\models p$ and $a:=(a_1,\dots,a_d)\models p^{(d)}$.
That $Z\neq V^{d+1}$ means that $(a_0,a)$ is not Zariski generic in $V^{d+1}$, and hence $a_0\nind_ka$ as $a$ is Zariski generic in $V^d$.
On the other hand, $\dim Z>\dim(V^d)=d\dim V$ forces $a_0\notin\acl(ka)$.
So $\qftp(a_0/ka)$ is a nonalgebraic forking extension of~$p$, witnessing that $\nmdeg(p)\leq d$.

It remains to show that if $\ell:=\nmdeg(p)$ then $(V,\phi)$ is not $(V^\ell,\phi)$-simple..
As $(V,\phi)$ is not simple, $p$ is neither algebraic nor miminal, and hence $\ell>0$ by convention.
By Lemma~\ref{morley}, there is $a\models p^{(\ell)}$ and $a_0\models p$ such that $a_0\nind_ka$ and $a_0\notin\acl(ka)$.
Let $Z=\loc(a_0,a/k)\subseteq V^{\ell+1}$.
Then~$Z$ is invariant for $(V^{\ell+1},\phi)$ since $\sigma(a_0,a)=\phi(a_0,a)$ because each of the $\ell+1$ co-ordinates of $(a_0,a)$ are realisations of~$p$.
Since $a_0\nind_ka$, we have that $Z\neq V^{\ell+1}$.
It projects dominantly onto~$V$ in the first co-ordinate because $a_0$ is Zariski generic in~$V$, and it projects dominantly onto $V^\ell$ on the last~$\ell$ co-ordinates because $a$, as it realises $p^{(\ell)}$, is Zariski generic in $V^\ell$.
Finally, as $a_0\notin\acl(ka)$, we have that
$$\dim Z=\trdeg(k(a_0,a)/k)>\trdeg(k(a)/k)=\dim(V^\ell).$$
So $(V,\phi)$ is not $(V^\ell,\phi)$-simple.
\end{proof}

Theorem~\ref{thm:nmdeg} translates into the following statement about algebraic dynamics:

\begin{corollary}
\label{cor:nmdeg-geom}
Suppose $(V,\phi)$ is a rational $\sigma$-variety over~$k$.
\begin{itemize}
\item[(a)]
$(V,\phi)$ is simple if and only if it is $(V^{n+3},\phi)$-simple where $n:=\dim V$.
\item[(b)]
Suppose~$\sigma$ is the identity on~$k$ so that $(V,\phi)$ is a rational dynamical system.
Then $(V,\phi)$ is simple if and only if it is $(V^{2},\phi)$-simple.
\end{itemize}
\end{corollary}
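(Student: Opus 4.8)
The plan is to obtain both parts as immediate translations of Theorem~\ref{thm:nmdeg} through the dictionary provided by Proposition~\ref{prop:nmdeg-geom}, which identifies the degree of nonminimality of the generic quantifier-free type with the least power of $V$ witnessing a failure of simplicity.

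First I would dispose of the easy directions. If $(V,\phi)$ is simple then, by definition, it is $(W,\psi)$-simple for \emph{every} rational $\sigma$-variety $(W,\psi)$; in particular it is $(V^{n+3},\phi)$-simple in case~(a) and $(V^2,\phi)$-simple in case~(b). So in both parts only the converse requires an argument.

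For the converse of~(a), suppose $(V,\phi)$ is not simple and let $p\in S_{\qf}(k)$ be its generic quantifier-free type. Since a zero-dimensional $(V,\phi)$ is trivially simple we have $\dim V>0$, and then Proposition~\ref{prop:min-simple} tells us that $p$ is nonalgebraic and nonminimal, so that $\ell:=\nmdeg(p)$ is a well-defined positive integer. By Proposition~\ref{prop:nmdeg-geom}, $\ell$ is precisely the least positive integer for which $(V,\phi)$ fails to be $(V^\ell,\phi)$-simple, while Theorem~\ref{thm:nmdeg}(a) gives $\ell\leq n+3$. Invoking the monotonicity recorded in the Remark after Definition~\ref{def:wsimple} — that $(V^d,\phi)$-simplicity implies $(V^k,\phi)$-simplicity whenever $k\leq d$ — and taking the contrapositive, the failure of $(V^\ell,\phi)$-simplicity for $\ell\leq n+3$ propagates upward to a failure of $(V^{n+3},\phi)$-simplicity, as desired. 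Part~(b) is the same argument verbatim, with Theorem~\ref{thm:nmdeg}(b) supplying the sharper bound $\ell\leq 2$ in the autonomous case.

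Since the whole deduction is bookkeeping, I do not expect a genuine obstacle; the only steps meriting care are the degenerate cases — confirming that $\dim V>0$ and that $p$ is neither algebraic nor minimal once $(V,\phi)$ is known not to be simple, so that $\nmdeg(p)$ is defined and strictly positive — together with the monotonicity of $(V^d,\phi)$-simplicity in~$d$; both are already available in the excerpt.
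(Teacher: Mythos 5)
Your proof is correct and follows essentially the same route as the paper: both directions are handled by combining Proposition~\ref{prop:nmdeg-geom} with the bounds of Theorem~\ref{thm:nmdeg}, using the monotonicity of $(V^d,\phi)$-simplicity in~$d$ to propagate the failure upward. The extra care you take with the degenerate cases (positive dimension, nonminimality of~$p$) is implicit in the paper's version but entirely consistent with it.
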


\begin{proof}
The left-to-right implications of both statements are clear.
For the converse, assume that $(V,\phi)$ is not simple.
Let $p\in S_{\qf}(k)$ be the generic quantifier-free type of $(V,\phi)$.
By Proposition~\ref{prop:nmdeg-geom},  $d:=\nmdeg(p)$ is the least positive integer such that $(V,\phi)$ is not $(V^d,\phi)$-simple.
Since $d\leq n+3$ by Theorem~\ref{thm:nmdeg}, we have that $(V,\phi)$ is not $(V^{n+3},\phi)$-simple.
In the case that $k\subseteq\C$, Theorem~\ref{thm:nmdeg} tells us that $d\leq 2$ and so $(V,\phi)$ is not $(V^2,\phi)$-simple.
\end{proof}

Theorem~\ref{thm:simple} of the Introduction is Corollary~\ref{cor:nmdeg-geom}(b).

\bigskip
\section{Bounding disintegration}
\label{sec:disintegration}
\noindent
A natural notion of being ``$m$-disintegrated" for complete types in stable theories would be to ask that any~$m$ distinct realisations be independent.
See,  for example, \cite[$\S$3]{c3} where this is studied in the case of differentially closed fields.
The naive analogue in our case, so for quantifier-free types in~$(\U,\sigma)\models\acfa_0$, does not make much sense.
For example, if $k\subseteq\C$ then only algebraic types $p\in S_{\qf}(k)$ could satisfy such a condition even for~$m=2$.
Indeed, for $p$ nonalgebraic, either every realisation is in the fixed field, in which case it is not hard to find a pair of distinct algebraically dependent realisations, or $a$ and $\sigma(a)$ are distinct dependent realisations of~$p$ for any $a\models p$.
So, at the very least, we should rule out such dependencies.

\begin{definition}
We say that $a,b\in\U^n$ are {\em $\sigma$-disjoint} if $a\neq\sigma^r(b)$ for any $r\in\ZZ$.
\end{definition}

\begin{remark}
\label{rem:disaut}
This notion of $\sigma$-disjointness is really only relevant in the (almost) autonomous case.
Indeed, if $(V,\phi)$ is a rational $\sigma$-variety over $k$ with generic quantifier-free type $p\in S_{\qf}(k)$ admitting a pair of distinct but not $\sigma$-disjoint realisations, then $(V,\phi)$ is defined over $k\cap\fix(\sigma^r)$ for some $r>0$.
\end{remark}

\begin{proof}
If $a\neq b$ are realisations of~$p$ that are not $\sigma$-disjoint then there is $r>0$ such that (without loss of generality) $a=\sigma^r(b)$.
Then $\sigma^r(b)$ is Zariski generic in both $V$ and $V^{\sigma^r}$ over~$k$, which forces $V=V^{\sigma^r}$, and hence~$V$ is defined over $k\cap\fix(\sigma^r)$.
On the other hand,
$$\phi(a)=\sigma(a)=\sigma^{r+1}(b)=\sigma^r(\phi(b))=\phi^{\sigma^r}(\sigma^r(b))=\phi^{\sigma^r}(a).$$
That is, $\phi$ and $\phi^{\sigma^r}$ agree on the generic point~$a$, so that $\phi=\phi^{\sigma^r}$, and hence~$\phi$ is defined over $k\cap\fix(\sigma^r)$ as well.
\end{proof}

\begin{definition}
Suppose $p\in S_{\qf}(k)$ and $m\geq 1$.
We say that~$p$ is {\em $m$-disintegrated} if every $m$ pairwise $\sigma$-disjoint realisations of~$p$ are independent over~$k$.
We say that~$p$ is {\em totally disintegrated} if it is $m$-disintegrated for all $m\geq 1$.
\end{definition}

\begin{remark}
\label{rem:disint}
Some immediate observations:
\begin{itemize}
\item[(a)]
$1$-disintegration is equivalent to~$p$ being nonalgebraic.
\item[(b)]
$2$-disintegration is equivalent to $\sigma$-disjointness agreeing with independence.
\item[(c)]
$m$-disintegrated implies $\ell$-disintegrated for $\ell\leq m$.
\item[(d)]
Totally disintegrated implies one-based.
\end{itemize}
\end{remark}

\begin{proof}
Only part~(d) requires proof.
It suffices to check that if $a=(a_1,\dots, a_n)$ and $b=(b_1,\dots, b_m)$ are finite tuples of relaisations of~$p$ then~\(a\) is independent from~\(b\) over \(\acl(ka)\cap\acl(kb)\)-- see Remark~\ref{rem:modular}.
As it does not change $\acl(ka)$ and $\acl(kb)$, we may assume that the $a_i$'s are pairwise $\sigma$-disjoint, and also the $b_i$'s.
If some $a_i,b_j$ is not $\sigma$-disjoint then $a_i\in \acl(ka)\cap\acl(kb)$, and hence it suffices to show that ~\(a'\) is independent from~\(b\) over \(\acl(ka)\cap\acl(kb)\) where $a'$ is the subtuple of $a$ obtained by removing such $a_i$'s.
But now, the full sequence of co-ordinates of $a'$ and $b$ are pairwise $\sigma$-disjoint, and hence independent over~$k$ by total disintegratedness, which in particular implies ~\(a'\) is independent from~\(b\) over \(\acl(ka)\cap\acl(kb)\), as desired.
\end{proof}

\begin{example}
The following is a special case of~\cite[Lemma~6.1]{acfa}:
\begin{itemize}
\item[$(\dagger)$]
for any algebraically closed inversive difference field $(E,\sigma)$, 
if $\sigma(a)=a^2+1$ and $\sigma(b)=b^2+1$ with $b\in\acl(Ea)\setminus E$ then $a,b$ are not $\sigma$-disjoint.
\end{itemize}
It follows that if $p$ is the generic quantifier-free type over~$k$ of the rational dynamics on the affine line given by 
$x\mapsto x^2+1$ then~$p$ is totally disintegrated.
Indeed, if $a_1,\dots,a_m$ is a dependent set of realisation of~$p$, let $i<j$ be such that $a_i\nind_{ka_1\dots a_{j-1}}a_j$, and set $E:=\acl(ka_1\dots a_{j-1})$.
Minimality of~$p$ implies  $a_i\in\acl(Ea_j)\setminus E$, and so $a_i,a_{j}$ are not $\sigma$-disjoint by~$(\dagger)$.
\end{example}

We aim to prove:

\begin{theorem}\label{d3}
Suppose $p\in S_{\qf}(k)$ is rational and nonalgebraic.
\begin{itemize}
\item[(a)]
Suppose $\dim(p)=1$.
If~$p$ is $4$-disintegrated then it is totally disintegrated.
\item[(b)]
 Suppose $\dim(p)>1$ and let
$\ell:=\max\{3, \nmdeg(p)+1\}$.
If~$p$ is $\ell$-disintegrated then it is totally disintegrated.
\item[(c)]
Suppose $k\subset\C$.
If $p$ is $3$-disintegrated then it is totally disintegrated.
\end{itemize}
\end{theorem}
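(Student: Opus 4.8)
The plan is to argue by minimal counterexample. Suppose $p$ is $\ell$-disintegrated (with $\ell$ as in the part being proved) but not totally disintegrated, and let $m$ be least with $p$ not $m$-disintegrated. Since $\ell\geq 3$ in all three parts, $m\geq 4$; fix pairwise $\sigma$-disjoint $a_1,\dots,a_m\models p$ that are dependent over $k$, so that by minimality of $m$ every proper subset is independent over $k$, and after relabelling $a_m\nind_k\{a_1,\dots,a_{m-1}\}$. The goal is to extract from this configuration a forking datum that, by the structure theory developed above, can be realised by far fewer than $m$ realisations of $p$, contradicting minimality of $m$.

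\textbf{Extracting the canonical base.} First I would set $E:=k\cdot\cb\big(a_m/\acl(ka_1\cdots a_{m-1})\big)$ and, using Lemma~\ref{lem:cbred}(a), write $E=k(e)=k\langle e\rangle$, so that $\qftp(e/k)$ is rational. One checks that $e\in\acl(ka_1\cdots a_{m-1})$, that $a_m\nind_k e$, and — crucially — that $(m-1)$-disintegration together with nonalgebraicity of $p$ forces $e\notin\acl(ka_i:i\in S)$ for every $S\subsetneq\{1,\dots,m-1\}$ of size $m-2$; thus $e$ ``genuinely needs all $m-1$'' of the $a_i$. The canonical base property, Fact~\ref{fact:cbp}, gives that $\qftp(e/ka_m)$ is qf-internal to $\C$, and as in the proof of Theorem~\ref{thm:nonorth} one has the stronger statement that $\tp\big(e/\acl(ke)\cap\acl(ka_m)\big)$ is almost $\C$-internal.

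\textbf{Case analysis and the arity reduction.} If $p$ were minimal and one-based, then since $\ell\geq 2$ and $\sigma$-disjointness of the witnesses excludes the group-type minimal geometries (a nontrivial isogeny-image or multiple of a realisation is $\sigma$-disjoint from it yet dependent), $p$ would be relatively trivial, hence totally disintegrated — a contradiction; so $p$ is not one-based, and therefore $e\notin\acl(ka_m)$ and $p\not\perp\C$. When $p$ is not minimal and $\nmdeg(p)>1$ one gets exchange directly from Lemma~\ref{lem:exchange}, so Theorem~\ref{thm:nonorth} places us in case (2): $p$ maps finite-to-one onto a rational $q$ that is qf-internal to $\C$. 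In either situation the heart of the argument is to use the binding-group results of~\cite{qfint} for the $\C$-internal type over which $e$ (resp.\ the map $p\to q$) lives, together with the bound on the degree of nonminimality (Theorem~\ref{thm:nmdeg}: $\nmdeg(p)\leq\dim(p)+3$ in general, $\leq 2$ when $k\subseteq\C$), to show that the forking datum witnessed by $\{a_1,\dots,a_m\}$ is already witnessed by a sub-configuration of size at most $\nmdeg(p)+1$ — and at most $3$ when $k\subseteq\C$ — the ``$+1$'' being exactly the free realisation that $\sigma$-disjointness supplies. Since $m-1\geq\ell$ exceeds this bound, we contradict ``$e$ needs all $m-1$ of the $a_i$''. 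Part (c) then follows: for $\dim(p)>1$, $k\subseteq\C$ forces $\nmdeg(p)\leq 2$ so $\ell=3$ and part (b) applies; for $\dim(p)=1$, $k\subseteq\C$, one uses that autonomous $1$-dimensional $\C$-internal types are never $3$-disintegrated (a nonconstant function into $\C$ transports an additive relation $t(a_3)=t(a_1)+t(a_2)$ back along the finite-to-one cover), improving $4$ to $3$.

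\textbf{The main obstacle.} The delicate point is the quantitative arity reduction in the previous paragraph: one must combine the binding-group picture of~\cite{qfint} with the nonminimality bound so that the count comes out to precisely $\nmdeg(p)+1$ (resp.\ $3$), which requires tracking carefully how $\sigma$-disjointness allows absorbing one realisation into the parameters. Moreover the mechanism routed through $\nmdeg(p)$ is vacuous when $p$ is minimal, so the $\dim(p)=1$ case of part (a) must be handled by a separate, more hands-on argument: $p$ is then finite-to-one over a $1$-dimensional $\C$-internal type whose binding group is a connected algebraic group acting faithfully on a curve, hence of dimension at most $3$, and pinning down a generic configuration costs at most $\dim(\text{binding group})+1\leq 4$ realisations — which is why the bound in (a) is $4$ rather than $3$.
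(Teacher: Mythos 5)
Your proposal assembles the right ingredients (CBP, Zilber dichotomy, binding groups, the $\nmdeg$ bounds) but the argument is not actually carried out at the two places where all the work lies, and at one of them the step you sketch would fail. First, the central ``arity reduction'' --- that the forking datum witnessed by $a_1,\dots,a_m$ is already witnessed by at most $\nmdeg(p)+1$ realisations --- is asserted, not proved; no mechanism is given for absorbing $m-1$ parameters into $\nmdeg(p)$ of them, and the canonical base $e$ you extract plays no further role. The paper's route is different and avoids this entirely: Proposition~\ref{dismin} shows directly (via the geometric characterisation of $\nmdeg$ in Proposition~\ref{prop:nmdeg-geom}) that non-minimality produces $m$ pairwise $\sigma$-disjoint dependent realisations for any $m>\nmdeg(p)$ --- the $\sigma$-disjointness coming for free from a transcendence-degree count --- so $\ell$-disintegration forces minimality, after which the Zilber dichotomy splits the problem into the one-based case (handled by triviality versus the Chatzidakis--Hrushovski group configuration, which yields three pairwise independent, hence $\sigma$-disjoint, but jointly dependent realisations) and the nonorthogonal-to-$\C$ case.

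Second, in the nonorthogonal-to-$\C$ case your mechanism is flawed: you cannot ``transport an additive relation back along the finite-to-one cover'' $f:p\to q$, because $f$ need not be surjective from $p(\U)$ to $q(\U)$ (the paper flags exactly this obstruction in the proof of Proposition~\ref{autiso2dis}); a point of $q(\U)$ satisfying $t(a_3)=t(a_1)+t(a_2)$ may have no preimage realising $p$. The paper instead invokes the translational structure from~\cite{bms} and a pseudofinite-field lemma (Lemma~\ref{psf-not2dis}) to manufacture dependent realisations inside $p(\U)$ itself. Relatedly, your part~(a) heuristic ``$\dim(\text{binding group})+1\leq 4$'' neither addresses $\sigma$-disjointness of the resulting configuration (the genus-$0$ case of Proposition~\ref{nonautiso4dis} needs a separate argument that $k\cap\C$ is not finitely generated to arrange this) nor covers the higher-genus case, where the binding group is finite and the count $\dim G+1$ degenerates; there the paper switches to a completely different pseudofinite-field argument showing failure of $2$-disintegration. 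So while your high-level case division is reasonable, the proof as written has genuine gaps at precisely the quantitative and $\sigma$-disjointness steps that the theorem is about.
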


Combining parts~(a) and~(b) of Theorem~\ref{d3} with Theorem~\ref{thm:nmdeg}(a) will give us that $(\dim(p)+4)$-disintegration does imply total disintegration in general.
So, along with part~(c), this will establish Theorem~\ref{d3-intro} of the Introduction.

We do not expect Theorem~\ref{d3} to be sharp; indeed, we expect an absolute bound independent of~$p$, even in the nonautonomous case.
However, the example constructed in~$\S$\ref{sec-2trans} does show that such an absolute bound would have to be at least~$3$.
In the autonomous case we do expect~$3$ to be sharp bound: there should exist a rational type that is $2$-disintegrated but not $3$-disintegrated.
But we do not yet have an example.
(As the proof will show, such an example must be minimal and one-based, and related to the simple abelian variety dynamics of Example~\ref{example-sav}, but requiring an additional parameter to witness that connection.)

We begin by working toward a proof of part~(b) of Theorem~\ref{d3}, which deals with the most general case.
We require the following two preliminary propositions, which may be of interest in their own right.

\begin{proposition}
\label{dismin}
Suppose $p\in S_{\qf}(k)$ is rational and nonalgebraic.
If $p$ is $m$-disintegrated for $m>\nmdeg(p)$ then $p$ is minimal.
\end{proposition}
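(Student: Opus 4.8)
The plan is to argue by contraposition: assuming $p$ is nonminimal, I will produce $m$ pairwise $\sigma$-disjoint realisations of $p$ that are dependent over $k$, for every $m>\nmdeg(p)$. Write $d:=\nmdeg(p)$. Since $p$ is nonminimal and nonalgebraic, $d\geq 1$, and by Lemma~\ref{morley} there is a witness $a=(a_1,\dots,a_d)\models p^{(d)}$ together with a realisation $a_0\models p$ such that $a_0\nind_k a$ but $a_0\notin\acl(ka)$. The first key point is that the full tuple $(a_0,a_1,\dots,a_d)$ is a set of $d+1$ realisations of $p$ that is dependent over $k$ (as $a_0\nind_k a$). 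Since $d+1\leq m$, one then pads with $m-d-1$ further independent realisations $a_{d+1},\dots,a_{m-1}$ of $p$, taken generic over everything so far; the resulting $m$-tuple is still dependent over $k$. So the only thing standing between us and a violation of $m$-disintegration is the requirement that these $m$ realisations be pairwise $\sigma$-disjoint.

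Hence the main obstacle, and the real content of the proof, is to arrange $\sigma$-disjointness. Here I would use Remark~\ref{rem:disaut}: if $p$ admits any pair of distinct realisations that are \emph{not} $\sigma$-disjoint, then the underlying rational $\sigma$-variety $(V,\phi)$ is defined over $k\cap\fix(\sigma^r)$ for some $r>0$. I would split into two cases. If $p$ has no pair of distinct, non-$\sigma$-disjoint realisations, then any $m$ pairwise distinct realisations are automatically pairwise $\sigma$-disjoint, and one simply needs the realisations above to be pairwise distinct, which is easily ensured (if $a_0$ equalled some $a_i$, $i\geq1$, then $a_0\ind_k a$ would follow, contradicting the choice; distinctness among $a_1,\dots,a_{m-1}$ is part of genericity). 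In the remaining case, $(V,\phi)$ descends to the fixed field of some power of $\sigma$; the standard trick (as in the autonomous reasoning of the paper) is to pass to the reduct $(\U,\sigma^r)$, where that power becomes the distinguished automorphism. In $(\U,\sigma^r)$ the type of $a_0$ is again a rational type (still nonminimal, since $SU$-rank only rescales by a bounded factor under passing to a power of $\sigma$, and in any case nonminimality is witnessed by the same forking data), and "$\sigma^r$-disjoint" is easier to achieve; but one must check that $\nmdeg$ does not increase when passing to the reduct, or more carefully, work directly with $\sigma$-disjointness by perturbing the chosen realisations within their types.

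A cleaner route, which I would actually try first, avoids the reduct entirely. Start with the witness $(a_0,a)\models p^{(d)}$-extended as above, and then for each $i$ replace $a_i$ by a generic $\sigma$-conjugate or by a realisation of the same type over the rest chosen to avoid the countably many "bad" equalities $a_i=\sigma^r(a_j)$; since each type involved is nonalgebraic and the set of constraints to avoid is countable while the type has continuum-many realisations (in a saturated model), such a choice exists, provided the replacement preserves the dependence $a_0\nind_k a$. The delicate step is preserving that dependence: replacing $a_i$ by something realising $\qftp(a_i/ka_0a_1\dots\widehat{a_i}\dots)$ keeps the whole configuration's type over $k$ fixed, hence keeps $a_0\nind_k(a_1,\dots,a_d)$; one only needs the new $a_i$ to be $\sigma$-disjoint from all the others and from $a_0$, which is a countable avoidance condition compatible with a nonalgebraic type. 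So I expect the proof to consist of: (1) invoke Lemma~\ref{morley} to get the nonminimality witness of the right shape; (2) extend it to a dependent $m$-tuple by adding generic realisations; (3) use saturation plus Remark~\ref{rem:disaut}-type reasoning to perturb the entries, one at a time, to be pairwise $\sigma$-disjoint without destroying the dependence; (4) conclude that $p$ is not $m$-disintegrated. The perturbation argument in step~(3) is where the care is needed, since one must simultaneously respect the fixed joint type over $k$ (to keep the dependence) and dodge the countably many transformal equalities.
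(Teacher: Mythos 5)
Your overall strategy---contraposition, extracting the nonminimality witness $(a_0,a_1,\dots,a_d)$ via Lemma~\ref{morley}, and padding it to a dependent $m$-tuple---is exactly the configuration the paper uses (phrased there geometrically: by Proposition~\ref{prop:nmdeg-geom} the underlying $\sigma$-variety is not $(V^{m-1},\phi)$-simple, and the tuple is a generic point of a witnessing $Z\subseteq V^m$ with $\dim Z>(m-1)\dim V$). The gap is in your step~(3). Your perturbation argument rests on the assertion that ``each type involved is nonalgebraic'', i.e.\ that $\qftp(a_i/k\,a_0\cdots\widehat{a_i}\cdots a_{m-1})$ is nonalgebraic for every $i$. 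That is unjustified and can fail: we only know $a_0\notin\acl(ka_1\cdots a_d)$, and without exchange nothing prevents, say, $a_1$ from lying in $\acl(ka_0a_2\cdots a_d)$ once $a_0$ is adjoined. For such an $i$ the type over the rest has only finitely many realisations, and you cannot dodge the countably many transformal equalities by re-choosing $a_i$.

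The repair---and the observation you missed---is that no perturbation is needed at all. Since each $a_j$ realises the rational type $p$, we have $\sigma^r(a_j)\in k\langle a_j\rangle=k(a_j)$ for all $r\geq 0$. So if $a_i=\sigma^r(a_j)$ for some $i\neq j$ (after swapping the roles of $i$ and $j$ we may take $r\geq 0$), then $a_i$ can be dropped from the tuple without changing the field it generates over $k$, whence $\trdeg(k(a_0,\dots,a_{m-1})/k)\leq(m-1)\dim(p)$. But $a_0\notin\acl(ka_1\cdots a_d)=k(a_1,\dots,a_d)^{\alg}$ together with the independence of the padding realisations gives $\trdeg(k(a_0,\dots,a_{m-1})/k)>(m-1)\dim(p)$. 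So the tuple is automatically pairwise $\sigma$-disjoint, and being dependent (as $a_0\nind_k(a_1,\dots,a_d)$) it witnesses the failure of $m$-disintegration; this transcendence-degree count is precisely how the paper's proof concludes. (If you do insist on perturbing, note that only pairs involving $a_0$ can fail $\sigma$-disjointness---the entries $a_1,\dots,a_{m-1}$ are pairwise independent and nonalgebraic, so a transformal equality among them would force one into $\acl(k)$---and $\qftp(a_0/ka_1\cdots a_{m-1})$ is nonalgebraic by construction, so a single perturbation of $a_0$ alone also closes the argument.)
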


\begin{proof}
Suppose $p$ is not minimal and let $m>d:=\nmdeg(p)>0$.
Let $(V,\phi)$ be the rational $\sigma$-variety that $p$ is the generic quantifier-free type of.
So $(V,\phi)$ is not simple.
By Proposition~\ref{prop:nmdeg-geom}, $(V,\phi)$ is not $(V^{m-1},\phi)$-simple.
Let $Z\subseteq V^m$ be a witness.
That is, $Z$ is a proper irreducible invariant subvariety of $(V^m,\phi)$ over~$k$ that project dominantly on the first $V$-co\"ordinate and the last $V^{m-1}$-co\"ordinates, and $\dim Z>(m-1)\dim V$.
A generic point of $(Z,\phi)$ will be of the form $(a_1,\dots,a_m)$ where each $a_i\models p$.
If $a_i=\sigma^r(a_j)$, for some $i\neq j$ and $r\in\ZZ$, then, possibly after reordering, we may assume that $i=1$, $j>1$, and $r\geq 0$.
So
$$k(a_1,\dots,a_m)\subseteq k\langle a_2,\dots,a_m\rangle=k(a_2,\dots,a_m).$$
But this contradicts $\dim Z>(m-1)\dim V$.
Hence $a_1,\dots,a_m$ is pairwise $\sigma$-disjoint.
On the other hand, $a_1,\dots, a_m$ is not independent as $Z\subsetneq V^m$.
So~$p$ is not $m$-disintegrated.
\end{proof}

\begin{proposition}\label{min1based-dis}
Suppose~$p\in S_{\qf}(k)$ is a minimal one-based type.
If~$p$ is $3$-disintegrated then it is totally disintegrated.
\end{proposition}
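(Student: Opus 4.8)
The plan is to argue by contradiction. Suppose $p$ is not totally disintegrated, and let $m$ be least such that some pairwise $\sigma$-disjoint $m$-tuple of realisations of $p$ is dependent over~$k$; fix such a tuple and let $C=\{a_1,\dots,a_m\}$ be the set of its entries. By minimality of $m$, every proper subset of $C$ is independent over~$k$ (any proper subset is still pairwise $\sigma$-disjoint, of smaller size), so $C$ is a circuit of the pregeometry on $p(\U)$ over~$k$. Since $3$-disintegration implies $2$-disintegration (Remark~\ref{rem:disint}(c)) and $p$ is nonalgebraic, $m\neq 1,2$; and $m=3$ contradicts $3$-disintegration outright; so $m\geq 4$. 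The goal is then to manufacture, from this size-$m$ circuit, a dependent triple of pairwise independent realisations, which will again contradict $3$-disintegration once we note that pairwise independent realisations of $p$ are automatically pairwise $\sigma$-disjoint (if $x=\sigma^r(y)$ then $x\in\dcl(ky)\subseteq\acl(ky)$, so $x\nind_k y$).

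The key step is to exploit one-basedness. By Remark~\ref{rem:modular}, one-basedness says exactly that the pregeometry on $p(\U)$ over~$k$ is modular, hence satisfies the modular law for flats. Put $X=\acl(ka_1a_2)\cap p(\U)$ and $Y=\acl(ka_3\cdots a_m)\cap p(\U)$; these are flats of dimensions $2$ and $m-2$ respectively (proper subsets of the circuit $C$ are independent), while $X\vee Y$ is the flat generated by $C$, of dimension $\su(C/k)=m-1$. The modular law then gives that $X\cap Y=\acl(ka_1a_2)\cap\acl(ka_3\cdots a_m)\cap p(\U)$ has dimension $2+(m-2)-(m-1)=1$, so there is a realisation $e$ of $p$ with $e\in\acl(ka_1a_2)\cap\acl(ka_3\cdots a_m)$ and $e\notin\acl(k)$. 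Now $e\in\acl(ka_1a_2)$ makes $\{a_1,a_2,e\}$ dependent; $a_1\ind_k a_2$ since $\{a_1,a_2\}\subseteq C$; and $e\ind_k a_1$ (and symmetrically $e\ind_k a_2$), for otherwise minimality of $p$ would give $a_1\in\acl(ke)\subseteq\acl(ka_3\cdots a_m)$, making $C\setminus\{a_2\}$ dependent and contradicting that $C$ is a circuit. Thus $\{a_1,a_2,e\}$ is a dependent, pairwise independent — hence pairwise $\sigma$-disjoint — triple of realisations of $p$, contradicting $3$-disintegration. This contradiction shows $p$ is totally disintegrated.

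The step I expect to require the most care is the extraction of the realisation $e$: one needs that the pregeometry of $p(\U)$ over~$k$ genuinely obeys the modular \emph{equality} for flats, equivalently that for tuples of realisations $u,v$ the intersection $\acl(ku)\cap\acl(kv)$ is generated over~$k$ by realisations of $p$ of the expected dimension. This is the standard fact that the geometry of a one-based minimal type is modular, and in the present setting it can be obtained from Remark~\ref{rem:modular} together with Lemma~\ref{lem:cbred}(a) — which presents $k\cdot\cb(u/\acl(kv))$ as the field generated over~$k$ by a tuple built from realisations of $p$, and which by one-basedness lies in $\acl(ku)$ — once one checks this canonical base can be taken over~$k$ rather than over auxiliary parameters. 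Alternatively, one may bypass this by invoking the classification of minimal modular types in $\acfa_0$ from \cite{acfa} and \cite{mm}: if the geometry of $p$ is trivial, then a dependent tuple of $\sigma$-disjoint realisations has a dependent pair, contradicting $2$-disintegration; and if it is nontrivial, a definable group is present whose generic type visibly fails $3$-disintegration (it contains a dependent pairwise independent triple coming from the group law), again contradicting the hypothesis.
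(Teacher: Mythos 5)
Your primary argument has a genuine gap at exactly the step you flagged: one-basedness does \emph{not} imply that the pregeometry on $p(\U)$ over~$k$ satisfies the modular \emph{equality} for flats. What Remark~\ref{rem:modular} gives is that $u\ind_{E_0}v$ for $E_0=\acl(ku)\cap\acl(kv)$; to extract your realisation $e$ you would additionally need $E_0$ to contain $\su(E_0/k)$ many independent realisations of $p$, i.e.\ that the relevant part of $E_0$ is generated by points of $p(\U)$. This fails in general: the geometry of a one-based minimal type is only \emph{locally} modular, and the affine case genuinely occurs in $\acfa_0$. Concretely, let $G$ be a minimal modular definable subgroup of an abelian variety (as in Example~\ref{example-sav}) and let $p$ be the generic quantifier-free type of a torsor of $G$ with no point algebraic over~$k$; for a circuit such as $a_1-a_2+a_3-a_4=0$ one finds that $\acl(ka_1a_2)\cap\acl(ka_3a_4)$ contains the rank-one element $a_1-a_2$ of $G$ but no realisation of $p$ at all, so your flat $X\cap Y$ has dimension $0$, not $1$, and no $e$ exists. (Of course such a $p$ is not $3$-disintegrated -- the translates $a$, $a+g$, $a+2g$ form a dependent pairwise independent triple -- but that is established by the group structure, not by your circuit argument, and your argument as written never rules this case in or out.)

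Your fallback route (b) is, in substance, the paper's actual proof: one splits according to whether the geometry of $p$ is trivial. In the trivial case, $2$-disintegration already yields total disintegration, since any dependent pairwise $\sigma$-disjoint set has a dependent (still $\sigma$-disjoint) pair. In the nontrivial case the paper invokes the Chatzidakis--Hrushovski classification \cite[Theorem~5.12]{acfa} to produce a commutative algebraic group $A$ and minimal types $r,s$ on $A$ with $s$ interalgebraic with $p$ and $r$ the type of a difference of independent realisations of $s$; a short computation (the paper's Claim) shows $b$, $h+b$, $2h+b$ are pairwise independent but dependent realisations of $s$, and interalgebraicity transfers this to a dependent, pairwise independent -- hence pairwise $\sigma$-disjoint -- triple of realisations of $p$, contradicting $3$-disintegration. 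Your sketch of this is correct in outline but elides the transfer step (interalgebraicity of $s$ with $p$ and the verification that $h+b\models s$), which is where the actual work lies. So: the modular-law argument should be discarded, and the classification argument should be written out in full; once that is done you have the paper's proof.
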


\begin{proof}
One possibility is that $p$ is ``trivial" in the sense that any set of pairwise independent realisations is independent.
See~\cite[Definition~3.1(5)]{acfa}.
In that case, if~$p$ is $2$-disintegrated, any set of pairwise $\sigma$-disjoint realisations will be independent.
Hence, already $2$-disintegrated implies totally disintegrated.
(This does not use minimality.)

It suffices to show, then, that if $p$ is $3$-disintegrated then it is trivial.
This will follow from the characterisation of minimal one-based nontrivial types by Chatzidakis and Hrushovski in~\cite[Theorem~5.12]{acfa}.
From nontriviality of~$p$, the proof of that theorem produces, in particular,
a commutative algebraic group~$A$ over~$k$,
and a pair of minimal quantifier-free types $r,s\in S_{\qf}(k)$ extending~$A$ such that:
\begin{itemize}
\item[(i)]
there are independent $b_1,b_2\models s$ such that $r=\qftp(b_1-b_2/k)$, and
\item[(ii)]
$s$ is interalgebraic with $p$.
\end{itemize}
We will show that this implies~$p$ is not $3$-disintegrated.
First, observe:

\begin{claim}
If $h\models r$ and $b\models s$ are independent over~$k$ then $h+b$ is a realisation of~$s$ that is independent from both~$h$ and~$b$ over~$k$.
\end{claim}

\begin{claimproof}
Note that as~$k$ is algebraically closed, $\qftp(h/k)\cup \qftp(b/k)$ and $h\ind_kb$ determines $\qftp(h,b/k)$.
Hence it suffices to prove the claim for {\em some} independent $h\models r$ and $b\models s$.

Let $h:=b_1-b_2$ and $b:=b_2$.
To see that $h\ind_kb$, note that
\begin{eqnarray*}
SU(h/kb)
&=&
SU(b_1/kb)\ \ \text{ as $h=b_1-b$}\\
&=&
SU(b_1/k)\ \ \text{ as $b_1\ind_kb$}\\
&=&
SU(s)\\
&=&
SU(r)\ \ \text{ as both are~$1$}\\
&=&
SU(h/k).
\end{eqnarray*}
The same argument, with the roles of $b_1$ and~$b$ interchanged, shows that $h\ind_kb_1$.
So $h\ind_kh+b$.
That $b\ind_kh+b$ is just the fact that $b_2\ind_kb_1$.
\end{claimproof}

Now fix $h\models r$ and $b\models s$ independent over~$k$.
Then, by the Claim, $b$, $h+b$, and $2h+b$ are realisations of~$s$ that are pairwise independent but dependent as a triple.
As~$p$ is interalgebraic with~$s$, it follows that there exist realisations $a_1,a_2,a_3$ of $p$ that are pairwise independent but dependent, over~$k$.
In particular, $a_1,a_2,a_3$ are pairwise $\sigma$-disjoint.
So this witnesses that~$p$ is not $3$-disintegrated.
\end{proof}

We can dow deduce part~(b) of Theorem~\ref{d3}.

\begin{proof}[Proof of Theorem~\ref{d3}(b)]
Suppose $p\in S_{\qf}(k)$ is a rational nonalgebraic type that is $\ell$-disintegrated, where $\ell:=\max\{3, \nmdeg(p)+1\}$.
Because $\nmdeg(p)<\ell$, Proposition~\ref{dismin} implies that $p$ is minimal.
Now, by the Zilber Dichotomy (Theorem~\ref{thm:zd}), $p$ is either one-based or $1$-dimensional.
So, under the assumption that $\dim (p)>1$ we conclude that $p$ is one-based, and Proposition~\ref{min1based-dis} applies.
As $\ell\geq 3$, we conclude that~$p$ is totally disintegrated, as desired.
\end{proof}

Next we consider the autonomous case; so we aim to prove Theorem~\ref{d3}(c).
Note that we need only consider the $1$-dimensional case, as otherwise the result follows from part~(b) along with Theorem~\ref{thm:nmdeg}(b) which tell us that $\nmdeg(p)\leq 2$.
However, we will actually work in arbitrary dimension as it is no more difficult, and the preliminary work may be of general interest.

The first step is to show that autonomous rational types nonorthogonal to the fixed field are never even $2$-disintegrated.
This will be Proposition~\ref{autiso2dis} below, but we begin with a preparatory lemma about pseudofinite fields stated at a level of generality that will also be useful later.

\begin{lemma}
\label{psf-not2dis}
Suppose \(V\) is a positive-dimensional irreducible variety 
over~\(k\), and \(D\subseteq{}V(\C)\) is a \(\kk\)-definable subset that 
is Zariski dense in~\(V\).
Then there exist \(a,b \in{}D\) such that
\begin{itemize}
\item[(i)]
 $k(a)\not\subseteq k(b)$ and $k(b)\not\subseteq k(a)$,
 \item[(ii)]
 $a$ and $b$ are Zariski generic in~\(V\) over~\(\kk\), and
 \item[(iii)]
 \(\Set{a,b}\) is algebraically dependent over~\(\kk\).
 \end{itemize}
\end{lemma}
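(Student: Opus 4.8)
The plan is to use the pseudofiniteness of the fixed field~$\C$ --- in particular its being PAC --- to manufacture the required pair of points. In concrete terms we must find $a,b\in D$, both Zariski generic in~$V$ over~$k$, with $a\nind_k b$ (equivalently, $\{a,b\}$ algebraically dependent over~$k$) but $a\notin k(b)$ and $b\notin k(a)$.

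First I would extract a generic seed point. Since $D$ is Zariski dense in~$V$ and $k$ is small, saturation of $(\U,\sigma)$ produces $a\in D$ that is Zariski generic in~$V$ over~$k$: the partial type asserting $x\in D$ and $x\notin W$ for each proper $k$-subvariety $W\subseteq V$ is finitely satisfiable precisely because $D$ is not covered by finitely many proper subvarieties. Zariski density of $D$ in~$V$ is unaffected by enlarging the base from~$k$ to~$k(a)$.

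The heart of the matter is then to fabricate the partner~$b$. Fix, inside $V\times V$, a geometrically irreducible subvariety $W$ of dimension $2\dim V-1$ that projects dominantly onto both factors and, when $\dim V=1$, has both projections of degree at least~$2$; such $W$ exist in abundance (a generic irreducible hypersurface in $V\times V$ when $\dim V>1$, or a suitable degree-$(2,2)$ curve correspondence when $\dim V=1$). Writing $W_a\subseteq V$ for the fibre over~$a$ --- a codimension-one subvariety of~$V$ over $k(a)$, or a finite set of degree $\ge 2$ when $\dim V=1$ --- if we can arrange that $D$ is Zariski dense in~$W_a$, then picking $b\in D\cap W_a$ generic over $k(a)$ does the job: condition~(ii) holds since the projections of~$W$ are dominant; condition~(iii) holds since $\dim W<2\dim V$; and condition~(i) holds because the generic fibres of the two projections $W\to V$ are positive-dimensional (if $\dim V>1$) or of degree $\ge 2$ (if $\dim V=1$), so neither of $k(a),k(b)$ contains the other.

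The main obstacle --- and the only real content --- is the compatibility of~$W$ with~$D$: Zariski density of $D$ in~$V$ does \emph{not} imply that $D$ meets a \emph{pre-specified} proper subvariety of~$V$ in a Zariski-dense set (for example, in a pseudofinite field the squares and the non-squares are disjoint Zariski-dense definable subsets), so $W$ must be built out of~$D$ rather than chosen in advance. Here I would use stable embeddedness of~$\C$ together with the structure of definable sets in pseudofinite fields to replace $D$ by a Zariski-dense subset of the form $g\big(X(\C)\big)$ with $X$ a geometrically irreducible $\C$-variety and $g\colon X\dto V$ dominant; the cover of~$V$ cutting out $W_a$ is then read off from the geometry of~$X$ --- for instance from a nontrivial finite subextension of~$k(a)$ already realised inside~$\C$ because it descends from~$X$ --- and PAC-ness of~$\C$ guarantees that the relevant geometrically irreducible varieties over~$\C$ have Zariski-dense $\C$-points, so that $b\in D\cap W_a$ generic over $k(a)$ can indeed be found.
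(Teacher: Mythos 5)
Your proposal assembles the right ingredients (stable embeddedness of $\C$, the structure of definable sets in pseudofinite fields, PAC, a correspondence of controlled degree), and you correctly identify the crux: $D$ need not meet a pre-specified subvariety of $V$ densely. But the step you flag as ``the only real content'' is exactly the step you do not carry out, and your architecture makes it genuinely problematic. You fix $a\in D$ first and then look for $b\in D\cap W_a$ where $W_a$ is a fibre of a correspondence $W\subseteq V\times V$. Writing $D\supseteq g(E)$ with $g\colon X\dto V$ generically finite and $E$ containing all Zariski-generic $\C$-points of $X$, producing such a $b$ requires Zariski-dense $\C$-points on some component of $g^{-1}(W_a)$ dominating $W_a$. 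These components are a priori only defined over $\C^{\alg}$, not over $\C$, and need not be absolutely irreducible, so PAC does not apply to them; your phrase about a ``finite subextension of $k(a)$ already realised inside $\C$ because it descends from $X$'' asserts rather than proves the needed irreducibility and rationality. The failure mode is already visible when $\dim V=1$: there $W_a$ is a finite scheme over a subfield of $\C$ whose points need not lie in $\C$ at all (e.g.\ $V=\AA^1$, $W\colon y^2=x$, $a$ a nonsquare), so $D\cap W_a$ can be empty for your pre-chosen $W$.

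The paper's proof avoids this by never fixing $a$ first. After reducing to $D\supseteq g(E)$ as above, it places the auxiliary correspondence $Z$ upstairs on $X\times X$, constructed to be absolutely irreducible over $k\cap\C$ with both projections finite of degree strictly greater than $\deg(g)$. PAC is applied once, to $Z$ itself, yielding a Zariski-generic pair $(u,v)\in Z(\C)$ simultaneously; both coordinates then automatically lie in $E$, dependence comes from $Z\subsetneq X\times X$, and property (i) follows from the degree count $[k(u,v):k(u)]\leq[k(v):k(g(v))]\leq\deg(g)$ rather than from positive-dimensionality of fibres. To repair your argument you would need to replace ``fix $a$, then intersect'' with this simultaneous choice (or else prove absolute irreducibility over $\C$ of a suitable component of $g^{-1}(W_a)$, which your sketch does not do).
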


\begin{proof}
	Note that, since \(\C\) is stably embedded in \(\U\), the subset \(D\subseteq V(\C)\) is defined over \(C_\kk:=k\cap\C\).
	Also, as $V$ has a Zariski dense subset of $\C$-points, it is also defined over $C_k$.
	Hence, this is a statement entirely about the pseudofinite 
	field \((\C,+,\times)\).
	We point the reader to~\cite{zoe-psf} for a survey 
	of the model theory of pseudo-finite fields.
	
	By quantifier-reduction in pseudofinite fields, \(D\) will be a finite 
	union of sets of the form \(g(E)\) where \(g:W\to{}V\) is a generically 
	finite-to-one rational map from an irreducible algebraic 
	variety~\(W\) over~$k$, and \(E\subseteq W(\C)\) is a Zariski dense subset of~$W$ that is
	quantifier-free definable in \((\C,+,\times)\) over \(C_k\).
	One of these \(g(E)\) will then also be Zariski dense in~\(V\), and we fix 
	it.
	It now suffices to find \(u,v\in{}W(\C)\) such that
\begin{itemize}
\item $k(g(u))\not\subseteq k(g(v))$ and $k(g(v))\not\subseteq k(g(u))$,
 \item
 $u$ and $v$ are Zariski generic in~\(W\) over~\(\kk\), and
 \item
 \(\Set{u,v}\) is algebraically dependent over~\(\kk\).
 \end{itemize}
	Indeed, such \(u,v\) will be in \(E\) and $a:=g(u), b:=g(v)\in D$ will satisfy (i)--(iii).
	
	To that end, let \(Z\subseteq W\times W\) be a self-correspondence of degree greater than $\deg(g)$.
	That is, $Z$ is an absolutely irreducible subvariety over~\(C_\kk\) such that both co-ordinate projections 
	onto~\(W\) are dominant, finite-to-one, and of degree strictly greater that \(\deg(g)\).
	That such 
	subvarieties always exist, of every degree, is not difficult to verify by, 
	for example, reducing to the case that \(W\) is affine space and considering power functions.

As~\(\C\) is pseudo-algebraically-closed there is \((u,v)\in{}Z(\C)\) Zariski generic in~$Z$ over~$C_k$.
Note that $(u,v)$ is in fact Zariski generic over~$k$, as if it is contained in a subvariety defined over $k$ then it would be contained in the Zariski closure of the $\C$-points of that subvariety which is defined over $C_k$.
So $u,v\in W(\C)$ are Zariski generic in~$W$.
If $k(g(v))\subseteq k(g(u))$ then
	$$[k(u,v):k(u)]\leq [k(v):k(g(u))]\leq[k(v):k(g(v))]\leq\deg(g),$$
contradicting our choice of~$Z$.
	Similarly, $k(g(u))\not\subseteq k(g(v))$.
	From $Z\neq W\times W$ we deduce that~$u$ and~$v$ are algebraically dependent over~$k$.
\end{proof}

\begin{proposition}
\label{autiso2dis}
Suppose $k\subseteq\C$ and $p\not\perp\C$ is a rational type over~$k$.
Then $p$ is not $2$-disintegrated.
\end{proposition}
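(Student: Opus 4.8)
The plan is to reduce to the case of a qf-internal type and then invoke Lemma~\ref{psf-not2dis}. Since $p\not\perp\C$, by~\cite[Proposition~2.11]{qfint} there is a rational map $f\colon p\to q$ with $q\in S_{\qf}(k)$ a positive-dimensional rational type that is qf-internal to~$\C$; by~\cite[Proposition~3.6]{qfint} the corresponding rational $\sigma$-variety $(W,\psi)$ is isotrivial. It will suffice to exhibit two $\sigma$-disjoint realizations of $q$ that are dependent over~$k$ and then to transfer such a pair back along~$f$.

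Isotriviality gives $\ell\geq 1$, a rational $\sigma$-variety $(W_0,\psi_0)$, and an equivariant birational embedding $G=(g,\mathrm{pr})\colon(W\times W_0,\psi\times\psi_0)\dto(\AA^\ell\times W_0,\id\times\psi_0)$ over $(W_0,\psi_0)$. Because $k\subseteq\C$ we have $g^\sigma=g$, so the equivariance identity reads $g(x,y)=g(\psi(x),\psi_0(y))$; evaluating at a sharp point $(w,w_0)$ of $(W\times W_0,\psi\times\psi_0)$ then gives $\sigma\big(g(w,w_0)\big)=g(w,w_0)$, i.e.\ $g(w,w_0)\in\C$. Fix $w_0$ realizing the generic quantifier-free type of $(W_0,\psi_0)$ and let $D_{w_0}:=g\big((W,\psi)^{\sharp}\times\{w_0\}\big)\subseteq\C^\ell$; this is a $k(w_0)$-definable subset of $\C^\ell$, Zariski dense in the positive-dimensional variety $Y_{w_0}:=\overline{g(W\times\{w_0\})}$, and by stable embeddedness of~$\C$ both $Y_{w_0}$ and $D_{w_0}$ are defined over the constant field $E_0:=k(w_0)\cap\C$. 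Applying Lemma~\ref{psf-not2dis} over $E_0$ to $Y_{w_0}$ and $D_{w_0}$ produces $u\neq u'$ in $D_{w_0}$, generic in $Y_{w_0}$, algebraically dependent over $E_0$, with $E_0(u)\not\subseteq E_0(u')$ and $E_0(u')\not\subseteq E_0(u)$. Their preimages under $G$, projected to $W$, are realizations $c,c'$ of $q$; dependence over $E_0\subseteq k(w_0)$ forces $c\nind_k c'$, while the mutual non-inclusion forces $k(c)\neq k(c')$, and by rationality of $q$ this makes $\{c,c'\}$ $\sigma$-disjoint (as $c=\sigma^r(c')$ would give $k(c)=k\langle c'\rangle=k(c')$). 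Hence $q$ is not $2$-disintegrated.

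To transfer to $p$, note that $f$ need not be surjective on generic sharp points, since fibres of $f$ need not be absolutely irreducible and so need not carry sharp points; but $f\big(p(\U)\big)$ is Zariski dense in $W$ (as $p(\U)$ is Zariski dense in $V$ and $f$ is dominant). I would therefore re-run the previous paragraph with $D_{w_0}$ replaced by the (still Zariski-dense) set $\{g(f(a),w_0):a\models p\}$ and with $w_0$ chosen generically over $k(a_0)$ for a prescribed $a_0\models p$; this yields $a_1\models p$ with $f(a_0)\nind_k f(a_1)$ and $\{f(a_0),f(a_1)\}$ $\sigma$-disjoint, whence $a_0\nind_k a_1$ and, since $f=f^\sigma$ over $k\subseteq\C$, also $\{a_0,a_1\}$ $\sigma$-disjoint. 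So $p$ is not $2$-disintegrated.

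The step I expect to be the main obstacle is this last transfer: one must simultaneously keep the dependent pair inside $f\big(p(\U)\big)$ and track the mutual non-inclusion of fields through the successive base changes $E_0\subseteq k(w_0)$, the pull-back along $G^{-1}$, and the push-down along~$f$. Should that bookkeeping prove unwieldy, a fallback avoiding $(W,\psi)$ entirely is to use $p^{(2)}\not\perp^{w}\C$ (from~\cite[Corollary~5.3]{qfint}) to obtain a nonconstant $e\in\big(k(a_1,a_2)\cap\C\big)\setminus k$ for an independent pair $(a_1,a_2)\models p^{(2)}$, and to argue from the fixed-field type $\qftp(e/k)$, to which Lemma~\ref{psf-not2dis} applies directly.
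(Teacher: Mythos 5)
There is a genuine gap, and it sits exactly where your argument leans hardest on Lemma~\ref{psf-not2dis}: the dependence you produce is over the wrong base. The trivialisation $g$ lives over a positive-dimensional parameter space $(W_0,\psi_0)$, so your application of the lemma takes place over $E_0=k(w_0)\cap\C$ (really over $\acl(kw_0)$), where $w_0$ is a \emph{new} generic parameter. The lemma then gives $u\nind_{E_0}u'$, hence at best $c\nind_{k(w_0)}c'$. The assertion ``dependence over $E_0\subseteq k(w_0)$ forces $c\nind_k c'$'' is backwards: dependence over a larger field does not descend to $k$. Concretely, one can have $c\ind_k c'$ while $(c,c')\nind_k w_0$, which is entirely consistent with $c\nind_{k(w_0)}c'$; nothing in your construction rules this out, since $u,u'$ are produced \emph{after} $w_0$ is fixed and carry no independence from $w_0$. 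The paper sidesteps this by invoking Corollary~A of~\cite{bms} to make $(V,\phi)$ itself translational over $k$: if the translating point has finite order one quotients to trivial dynamics \emph{over $k$} and applies Lemma~\ref{psf-not2dis} with base $k$ and no auxiliary parameters; if it has infinite order one translates a realisation by a suitable $k$-point $h$ of the Zariski closure of $\langle g\rangle$ to get a $\sigma$-disjoint dependent pair directly. Some such device for eliminating the trivialisation parameters seems unavoidable.

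Two further problems in the transfer from $q$ to $p$. First, $\{g(f(a),w_0):a\models p\}$ is the image of a type-definable set and is not definable, so Lemma~\ref{psf-not2dis} does not apply to it; replacing it by the definable $g\bigl(f((V,\phi)^\sharp)\times\{w_0\}\bigr)$ reopens the lifting problem, because a Zariski generic point of $W$ lying in $f\bigl((V,\phi)^\sharp\bigr)$ need not be the image of a point that is Zariski generic in $V$ unless $f$ is finite-to-one. You never reduce to the finite-to-one case; the reduction is easy (if $f^{-1}(f(a))\cap p(\U)$ is infinite one finds a $\sigma$-disjoint $b\models p$ in the same fibre, and dependence is immediate), but it is needed. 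Second, the lemma produces \emph{both} points of the pair, so you cannot prescribe $a_0$ in advance as your phrasing suggests. The fallback via $p^{(2)}\not\perp^w\C$ is not developed enough to assess: a dependence between two values of $e=h(a_1,a_2)$ yields a dependent $4$-tuple of realisations of $p$, not a dependent $\sigma$-disjoint pair.
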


\begin{proof}
We first claim that, as we are in the autonomous case, $m$-disintegratedness is a birational invariant.
Indeed, this follows once one observes that if $f:p\to p'$ is a birational map over~$k$, and $a,b $ are realisations of~$p$, then $a, b$ are $\sigma$-disjoint if and only if $f(a),f(b)$ are $\sigma$-disjoint.
This follows in turn from the fact that $\sigma$ commutes with~$f$ (and $f^{-1}$) as $k\subseteq\C$.
So, in proving that $p$ is not $2$-disintegrated, we can freely replace~$p$ by a rational type that is birational to~$p$.

Next,  \cite[Proposition~2.12]{qfint} tells us that nonorthogonality to~$\C$ is witnessed by a rational map $p\to q$ with~$q$ nonalgebraic and qf-internal to~$\C$.
If $a\models p$ and $f^{-1}(f(a))$ is infinite, then we can find $b\models p$ that is $\sigma$-disjoint from~$a$ but with $f(b)=f(a)$.
The latter forces $a\nind_kb$, and we have a witness to the failure of $2$-disintegration.
We may therefore assume that~$f$ is finite-to-one.

We now use a result from~\cite{bms}.
Let $(V,\phi)$ be the rational dynamical system over~$k$ whose generic quantifier-free type is~$p$.
The existence of finite-to-one $f:p\to q$ with $q$ qf-internal to~$\C$ implies, in the language of~\cite{bms}, that the {\em stabilised algebraic dimension} of $(V,\phi)$ is $\dim V$.
Corollary~A of~\cite{bms} then tells us that $(V,\phi)$ is, after possibly replacing $(V,\phi)$ by a birationally equivalent rational dynamical system, {\em translational}; meaning that there is a faithful algebraic group action, $G\times V\to V$ over~$k$, such that~$\phi$ agrees with the action on~$V$ of some $k$-point $g\in G(k)$.

If~$g$ is of finite order in~$G$ then~$\phi$ is of finite order.
By quotienting out by the action of the finite group generated by~$\phi$, we obtain a dominant equivariant rational map to trivial dynamics, $f:(V,\phi)\dto(V',\id)$, with $\dim V'=\dim V$.
(We cannot directly lift non-2-disintegration from $(V',\id)$, where it is easily seen to be true, because the induced rational map on the generic types may not be surjective.)
Note that a pair of realisations of~$p$ are $\sigma$-disjoint if and only if they have distinct images in $V'$.
Moreover, they are dependent over~$k$ if and only if their images are.
So it suffices to find a pair of realisations of~$p$ whose image in $V'$ are distinct but dependent.
This follows from applying Lemma~\ref{psf-not2dis} to $V'$ with 
$D:= f\big((V,\phi)^\sharp\big)$.
Note that~$D$ is Zariski dense in $V'$, and $f(p(\U))$ is precisely the set of points in~$D$ that are Zariski generic in~$V'$ over~$k$.
So Lemma~\ref{psf-not2dis} does indeed do the job for us, implying that~$p$ is not $2$-disintegrated in this case.

So we may assume that~$g$ is not of finite order.
Then the Zariski closure, $H$, of the cyclic subgroup of~$G$ generated by~$g$, is a commutative algebraic subgroup of~$G$ over~$k$ that is positive-dimensional.
Hence, there is some $h\in H(k)$ that is not equal to any integer power of~$g$.
Fix $a\models p$ and let $b:=h(a)$.
We claim that $b\models p$ too.
Indeed, $b$ is Zariski generic in~$V$ over~$k$ (as $h$ is an automorphism of~$V$) and
$$\sigma(b)=\sigma(h(a))=h(\sigma(a))=h(\phi(a))=h(g(a))=g(h(a))=\phi(b)
$$
as required.
The fact that $h$ is not an integer power of $g$, and $g$ agrees with $\sigma$ on~$a$ and~$b$, implies that $a, b$ are $\sigma$-disjoint.
But they are visibly dependent over~$k$.
Hence, $p$ is not $2$-disintegrated.
\end{proof}

We can now prove Theorem~\ref{d3}(c).

\begin{proof}[Proof of Theorem~\ref{d3}(c)]
Suppose that $k\subseteq\C$ and $p$ is $3$-disintegrated.
By Theorem~\ref{thm:nmdeg}(b), $\nmdeg(p)< 3$ in this case, so that Proposition~\ref{dismin} applies and we have that $p$ is minimal.
If $p$ is one-based then $3$-disintegratedness implies total disintegration by Proposition~\ref{min1based-dis}, and we are done.
By the Zilber Dichotomy, the only other possibility is that~$p$ is nonorthogonal to~$\C$.
But already $2$-disintegratedness rules out this possibility by Proposition~\ref{autiso2dis}.
\end{proof}

Finally, we address the possibly nonautonomous $1$-dimensional case.
Again, the only situation left to deal with is when the type is nonorthogonal to the fixed field.

\begin{proposition}
\label{nonautiso4dis}
Suppose $p\not\perp\C$ is a $1$-dimensional rational type over~$k$.
Then $p$ is not $4$-disintegrated.
\end{proposition}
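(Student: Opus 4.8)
The plan is to reduce, via the Zilber dichotomy, to an isotrivial $\sigma$-curve; to transport the problem into the fixed field using that isotriviality; to apply the pseudofinite-field argument already packaged in Lemma~\ref{psf-not2dis} to manufacture dependent realisations; and finally to pass from dependence over an auxiliary base down to dependence over $k$ and to arrange $\sigma$-disjointness, which is where the count reaches four.

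Since $\dim(p)=1$, the $\sigma$-variety $(V,\phi)$ attached to $p$ is a $\sigma$-curve, hence simple, so $p$ is minimal (Proposition~\ref{prop:min-simple} and the remark following it). As $p$ is minimal and $p\not\perp\C$, the portion of the proof of Theorem~\ref{thm:zd} that uses only minimality together with nonorthogonality to $\C$ produces a generically finite-to-one dominant equivariant rational map $f\colon(V,\phi)\dto(V',\phi')$ with $(V',\phi')$ an isotrivial $\sigma$-curve, whose generic quantifier-free type $q$ is $1$-dimensional and qf-internal to $\C$. Unwinding isotriviality of $(V',\phi')$, there are a rational $\sigma$-variety $(W,\psi)$ over $k$, an integer $\ell\geq 1$, and an equivariant embedding of $(V'\times W,\phi'\times\psi)$ into $(\AA^\ell\times W,\id\times\psi)$ over $(W,\psi)$; restricting it to the fibre over a generic point $w_0$ of $(W,\psi)^\sharp$, and writing $L:=k(w_0)$, one obtains an $L$-rational embedding $\iota\colon V'\hookrightarrow\AA^\ell$ carrying $(V',\phi')^\sharp$ bijectively onto $C_0(\C)$, where $C_0:=\iota(V')$ is a curve defined over $L\cap\C$ and $\iota^{-1}$ is $L$-rational. (The key point is that $\sigma(a')=\phi'(a')$ forces $\sigma(\iota(a'))=\iota(a')$, by equivariance and because $w_0$ is a sharp point of $(W,\psi)$.) Composing, $g:=\iota\circ f\colon V\dto C_0$ is an $L$-rational, generically finite dominant map of curves sending $(V,\phi)^\sharp$ into $C_0(\C)$; set $D:=g\big((V,\phi)^\sharp\big)$, a Zariski-dense $L$-definable subset of $C_0(\C)$ whose $L$-generic members are images under $g$ of realisations of $p$.

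Now apply Lemma~\ref{psf-not2dis} over the base $L$ to the curve $C_0$ and the set $D$: it yields $a,b\in D$, both Zariski generic in $C_0$ over $L$, with $L(a)\not\subseteq L(b)$, $L(b)\not\subseteq L(a)$, and $\{a,b\}$ algebraically dependent over $L$. Being $L$-generic members of $D$, we may write $a=g(\tilde a)$ and $b=g(\tilde b)$ with $\tilde a,\tilde b\models p$, and since $g$ is generically finite, $\tilde a\in\acl(L\tilde b)\setminus\acl(L)$, so $\tilde a\nind_L\tilde b$. Two tasks remain. First, to convert this into dependence over $k$: the locus $\loc(\tilde a,\tilde b/L)$ is a curve inside $V\times V$, and by Lemma~\ref{lem:cbred}(a) together with its ``moreover'' clause its canonical base lies in the algebraic closure of a field generated over $k$ by finitely many realisations of $p$; a bound on the relevant number of conjugates shows two such realisations $e_1,e_2$ suffice, and then $\tilde a,\tilde b,e_1,e_2$ are algebraically dependent over $k$, i.e.\ at most four realisations. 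Second, to secure $\sigma$-disjointness, split on whether $(V,\phi)$ is defined over $k\cap\fix(\sigma^r)$ for some $r\geq 1$. If it is not, then by the argument of Remark~\ref{rem:disaut} any two distinct realisations of $p$ are automatically $\sigma$-disjoint, so two dependent realisations already finish the proof. If it is, the only $\sigma$-disjointness obstructions are the finitely many $\sigma$-powers of a given realisation, and one uses the freedom available in Lemma~\ref{psf-not2dis} --- the pair $(a,b)$ ranges over a positive-dimensional family, each of $\tilde a,\tilde b$ over a finite fibre of $g$, and $e_1,e_2$ over realisations of $p^{(2)}$ --- to choose the four realisations avoiding those finitely many translates while keeping them dependent over $k$.

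The main obstacle is the interplay of the last two tasks: one must descend the dependence from the internality base $L$ to $k$ \emph{and} dodge $\sigma$-powers in the symmetric case $(V,\phi)\cong(V,\phi)^{\sigma^r}$, and it is the combination of these two needs --- not either one alone --- that forces the bound four here, in contrast with the bound two obtained in the autonomous setting of Proposition~\ref{autiso2dis} (where one may reduce birationally and where $\sigma$-disjointness is controlled by distinctness of images). Making the descent precise, and in particular pinning down that two auxiliary realisations always suffice, is the technical crux.
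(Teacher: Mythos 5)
Your overall strategy differs from the paper's, and it has a genuine gap precisely at the point you yourself flag as the crux. The paper does not use Lemma~\ref{psf-not2dis} over an auxiliary internality base at all in the main cases: it first notes that $p\not\perp\C$ together with $\dim(p)=1$ forces $\deg(\phi)=1$ (via the unboundedness theorem, \cite[Theorem~4.8]{acfa}), takes $V$ smooth projective, and then runs a trichotomy on the genus of $V$. In genus $0$ the sharp $3$-transitivity of $\pgl_2$ on $\PP^1$ means that three independent realisations $u,v,w$ already coordinatise $(V,\phi)^\sharp$ by $\PP^1(\C)$ via the map $A_{u,v,w}$, and a fourth dependent, $\sigma$-disjoint realisation $A_{u,v,w}^{-1}t$ is produced by hand; in genus $1$ the torsor structure gives $x+y-z$; in genus $\geq 2$ the type is not even $2$-disintegrated (and only there does Lemma~\ref{psf-not2dis} enter). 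The constant $4$ is exactly $3+1$, where $3$ is the size of a fundamental system for these group actions.

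The gap in your argument is the descent from $L=k(w_0)$ to $k$. Lemma~\ref{psf-not2dis} gives you $\tilde a\nind_L\tilde b$, but if $\tilde a\ind_k\tilde b$ this dependence is mediated entirely by $w_0$, which is a generic point of an arbitrary trivialising $\sigma$-variety $(W,\psi)$ and has nothing to do with realisations of $p$. The ``moreover'' clause of Lemma~\ref{lem:cbred}(a) only says that $\cb(\tilde a\tilde b/L)$ lies in the perfect closure of the field generated by \emph{some finite number} $d$ of realisations of $\qftp(\tilde a,\tilde b/L)$ --- that is, $d$ \emph{pairs} of realisations of $p$ --- where $d$ comes from \cite[Lemma~2.5(c)]{qfint} and is not bounded by $1$. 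Your assertion that ``a bound on the relevant number of conjugates shows two such realisations suffice'' is nowhere justified, and no such bound exists in the paper; if it did, it would essentially prove the absolute $\nmdeg\leq 2$ bound that the authors explicitly describe as out of reach. As written, your argument shows at best that $p$ fails $(2+2d)$-disintegration for an uncontrolled $d$. To recover the bound $4$ you must show that the qf-internality of $p$ to $\C$ is witnessed over a base generated by at most three realisations of $p$ itself, and that is precisely what the genus-$0$ and genus-$1$ group structures supply; there is no generic substitute for that step. (The $\sigma$-disjointness bookkeeping in your last paragraph is broadly repairable, but it is downstream of the unproved descent.)
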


\begin{proof}
Let $(V,\phi)$ be a rational $\sigma$-curve over $(k,\sigma)$ whose generic type is~$p$.
Since each birationality class of a curve is represented by a unique smooth 
and projective curve, we assume that~\(V\) is such.
Likewise, a rational map 
between two such curves is (represented by) a regular one, so that $\phi:V\to V^\sigma$ can be taken to be regular.

Next we observe that, in the $1$-dimensional case, $p\not\perp\C$ implies that $\phi$ is an isomorphism.
Indeed, letting \(a\models p\) we have that
$[k(a):k(\sigma^\ell(a)]=\deg(\phi)^\ell$, so that if $\deg(\phi)>1$ then $p$ is ``unbounded" in the terminology of~\cite{acfa}, and Theorem~4.8 of that paper implies that $1$-dimensional unbounded types are orthogonal to the fixed field.
So $\phi:V\to V^\sigma$ is an isomorphism.

We now consider the genus of the curve $V$.
We will show that if the genus of~\(V\) is less than~\(2\) then 
	\(p\) is not \(4\)-disintegrated, and otherwise it is not even \(2\)-disintegrated.
	We denote by $C_k:=\C\cap k$ the fixed field of $(k,\sigma)$.

Case~$1$: $V$ is of genus~$0$.
So \(V=\PP^1\), and \(\phi\in\pgl_2(\kk)\). Given 
		distinct \(u,v,w\in{}V(\U)\), let \(A_{u,v,w}\in\pgl_2(\U)\) be the 
		unique element mapping \(u,v,w\) to \(0,1,\infty\), respectively. Note 
		that \(A_{\phi(u),\phi(v),\phi(w)}\circ\phi=A_{u,v,w}\), and that 
		\({A_{u,v,w}}^\sigma=A_{\sigma(u),\sigma(v),\sigma(w)}\), so if 
		\(u,v,w\in\shp{\pp{V,\phi}}(\U)\), then \(A_{u,v,w}\) maps 
		\(\shp{\pp{V,\phi}}(\U)\) bijectively to \(V(\C)\).
		Now fix \(u,v,w\) independent realisations of \(p\), and let
		\(A=A_{u,v,w}\).
		Hence, if \(t\in\PP^1(C_\kk)\), then \(x=A^{-1}t\) is another 
		realisation of \(p\), which is visibly dependent on \(u,v,w\).
		Hence, to show that \(p\) is not \(4\)-disintegrated, it thus suffices to show 
		that we may choose \(t\) so that \(x\) is \(\sigma\)-disjoint from 
		\(u,v,w\).
		This is clear if \(C_\kk\) is uncountable, but we prove it in general. 
		By 
		Remark~\ref{rem:disaut}, we may assume that \(\phi^{\sigma^r}=\phi\) for some \(r>0\).  
		If \(x\) is not \(\sigma\)-disjoint from \(u\), we have, for some 
		\(m>0\), that \(x=\sigma^mu\), i.e., that   
		\(t=A\phi^{\sigma^{m-1}}\circ\dots\circ\phi\circ{}A^{-1}0\) (and 
		similarly for \(v,w\), replacing \(0\) with \(1,\infty\)).  Since 
		\(\phi^{\sigma^r}=\phi\), all such twisted powers of \(\phi\) (as well as 
		the conjugation by \(A\)) are defined over the same finitely generated 
		extension of \(\QQ\). Hence, any \(t\) that does not belong to this 
		finitely generated extension will yield \(x=A^{-1}t\) which is  
		\(\sigma\)-disjoint from \(u,v,w\).
		So it remains to observe that 
		\(C_\kk\) itself is not finitely generated.
		But if \(C_\kk\) were finitely generated, then, as $k$ is algebraically closed, the fixed field of \(\sigma|_{\QQ^{\alg}}\) would also be finitely generated, implying the absurdity that the absolute Galois group of 
	 \(\QQ\) is cyclic-by-finite.

Case~$2$: $V$ is of~genus~\(1\).
Then \(V\) is an elliptic curve, and 
		\(\phi:V\to{}V^\sigma\) is an isomorphism of elliptic curves, so there is 
		a group isomorphism \(\phi_0:V\ra{}V^\sigma\) and an element 
		\(a\in{}V^\sigma(k)\) such that \(\phi(y)=\phi_0(y)+a\) for all \(y\in{}V\).  
		Hence, \(\shp{\pp{V,\phi}}(\U)\) is a torsor over the group 
		\(\shp{\pp{V,\phi_0}}(\U)\), and this also implies that \(p\) is not \(4\)-disintegrated: if 
		\(x,y,z\) are any independent realisations of~\(p\), then \(x+y-z\) is a 
		realisation as well, and these are~\(4\) algebraically dependent 
		realisations.
		It is easy to see that independence of $x,y,z$ implies $\sigma$-disjointness of $x,y,z,x+y-z$.
		
Case~$3$: Higher genus.
Suppose, first of all, that there are distinct realisations of~$p$ that are not $\sigma$-disjoint.
In this case we prove that $p$ is not $2$-disintegrated.
By Remark~\ref{rem:disaut}, $(V,\phi)$ is defined over $k_0:=k\cap\fix(\sigma^r)$ for some $r>0$.
Hence $V^{\sigma^r}=V$ and \(\phi^{(r)}:=\phi^{\sigma^{r-1}}\circ\dots\circ\phi\) is an automorphism of~$V$.
The higher genus assumption implies that \(\phi^{(r)}\) has finite order.
It follows that $\phi^{(\ell)}=\id_V$ for some $\ell>0$.
So, if $a\in (V,\phi)^\sharp(\U)$ then $\sigma^\ell(a)=\phi^{(\ell)}(a)=a$.
That is, $D:=(V,\phi)^\sharp(\U)\subseteq V\big(\fix(\sigma^\ell)\big)$.
Applying Lemma~\ref{psf-not2dis}, but with $\sigma^\ell$, we find $a,b\in D$ such that
\begin{itemize}
\item[(i)]
 $k(a)\not\subseteq k(b)$ and $k(b)\not\subseteq k(a)$,
 \item[(ii)]
 $a$ and $b$ are Zariski generic in~\(V\) over~\(\kk\), and
 \item[(iii)]
 \(\Set{a,b}\) is algebraically dependent over~\(\kk\).
 \end{itemize}
 If $a=\sigma^m(b)$ for some $m\geq 0$ then $a=\phi^{(m)}(b)\in k(b)$, contradicting~(i).
 Similarly, $b\neq\sigma^m(a)$ for any $m\geq 0$, and we have that $a,b$ are $\sigma$-disjoint.
 By~(ii), and the fact that $D=(V,\phi)^\sharp(\U)$, we have that $a, b$ realise~$p$.
By~(iii), they witness the failure of $2$-disintegration.
				
We may therefore assume that distinct realisations of~$p$ are always $\sigma$-disjoint; so that what we seek is a pair of distinct but dependent realisations of~$p$.
By \cite[Proposition~2.12]{qfint}, nonorthogonality to~$\C$ is witnessed by a rational map $p\to q$ with~$q$ nonalgebraic and qf-internal to~$\C$.
Letting $(V',\phi')$ be an isotrivial rational $\sigma$-curve whose generic type is~$q$, and again taking~$V'$ to be a smooth projective model, we obtain a surjective equivariant map
$f:(V,\phi)\to(V',\phi')$.
By \cite[Theorem~1.6]{qfint}, the quantifier-free binding group of $q$ relative to~$\C$, $\aut_k(q/\C)$, acts by birational transformations, and hence by automorphosms, on~$V'$.
See the discussion in~$\S$\ref{subsect:nonorth} above.
The higher genus assumption thus implies that $\aut_k(q/\C)$ is finite.
Let $g:V'\to W$ be the quotient by the action of this finite group of automorphisms.
Now, two realisations, $a$ and $b$ of~$q$, are in the same $\aut_k(q/\C)$-orbit precisely if they have the same qf-type over \(k(\C)\).
Moreover, by~\cite[Proposition~2.5]{qfint}, $\qftp(a/k(\C))$ is isolated by $\qftp(a/k, k(a)\cap\C)$.
It follows that there are  invariant rational functions $g_1,\dots,g_n\in k(V')$ such that $g(a)=g(b)$ if and only if $g_i(a)=g_i(b)$ for all~$i$.
But this means that, up to birational equivalence, we can take $W$ to be a smooth projective curve over~$C_k$ and $g:(V',\phi')\to (W,\id)$ to be equivariant.
Composing, we obtain a surjective equivariant map $h:=g\circ f:(V,\phi)\to(W,\id)$.
So the image under~$h$ of Let \((V,\phi)^\sharp(\U)\) is a definable subset \(D\subseteq{}W(\C)\) that is Zariski generic in~$W$.
By Lemma~\ref{psf-not2dis}, $D$ includes a pair of 
		distinct algebraically dependent generic points of~$W$.
		The preimages of these in \((V,\phi)^\sharp(\U)\) will be distinct realisations of~$p$ that are dependent.
\end{proof}

And with that we can complete the proof of Theorem~\ref{d3}.

\begin{proof}[Proof of Theorem~\ref{d3}(a)]
Suppose that $p\in S_{\qf}(k)$ is rational and $\dim(p)=1$.
In particular, of course, $p$ is minimal.
If $p$ is one-based then already $3$-disintegratedness implies totally disintegrated by Proposition~\ref{min1based-dis}.
So we may assume that \(p\not\perp\C\), in which case Proposition~\ref{nonautiso4dis} implies non-$4$-disintegration.
\end{proof}

\medskip
\subsection{Proof of Theorem~\ref{c3thm-intro}}
We can now deduce our main transformal transcendence result.
Here it is, restated to include the possibly nonautonomous case:

\begin{corollary}\label{c3thm}
Consider the difference-algebraic equation
\begin{equation}
\label{de}
\sigma^n(y)=f\big(y, \sigma(y),\dots,\sigma^{n-1}(y)\big)\tag{E}
\end{equation}
of order $n\geq 1$, where~$y$ is a single variable and~$f\in k(y_0,\dots,y_{n-1})$.
For each $m\geq 1$, consider the following condition on~$(\ref{de})$:
\begin{itemize}
\item[$(C_m)$]
For any pairwise $\sigma$-disjoint solutions $b_1,\dots,b_m$ of~$(\ref{de})$, the transendence degree of $k\langle b_1,\dots,b_m\rangle$ over~$k$ is~$mn$.
\end{itemize}
Then $(C_{n+4})\implies(C_m)$ for all~$m$.
If $k\subseteq\C$ then $(C_3)\implies(C_m)$ for all~$m$.
\end{corollary}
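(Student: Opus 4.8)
The plan is to recognise the difference equation~(E) as (the sharp points of) a rational $\sigma$-variety and to read off each condition $(C_m)$ as a disintegration property of its generic quantifier-free type, after which Theorem~\ref{d3} and Theorem~\ref{thm:nmdeg} apply directly. Concretely, associate to~(E) the companion rational $\sigma$-variety $(V,\phi)$ with $V=\AA^n$ and $\phi(y_0,\dots,y_{n-1})=(y_1,\dots,y_{n-1},f(y_0,\dots,y_{n-1}))$. A solution~$b$ of~(E), in any difference field extension of $(k,\sigma)$, corresponds to the tuple $a=(b,\sigma(b),\dots,\sigma^{n-1}(b))\in(V,\phi)^{\sharp}$, and every sharp point arises this way with~$b$ as its first coordinate. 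Since $\sigma^n(b)=f(a)$ we have $k\langle b\rangle=k(a)$, and more generally $k\langle b_1,\dots,b_m\rangle=k(a_1,\dots,a_m)$ for the associated tuples; hence $\trdeg(k\langle b_1,\dots,b_m\rangle/k)=\dim\loc(a_1,\dots,a_m/k)\le mn$, with equality precisely when each $a_i$ is Zariski generic in $\AA^n$ and $(a_1,\dots,a_m)$ is independent over~$k$. One also checks that $a_i=\sigma^r(a_j)$ iff $b_i=\sigma^r(b_j)$, so $\sigma$-disjointness of solutions matches $\sigma$-disjointness of the corresponding sharp points.

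Now $(C_{n+4})$ --- and $(C_3)$ in the case $k\subseteq\C$ --- in particular forces $(C_1)$, i.e.\ forces every solution of~(E) to be Zariski generic, so that $\phi$ is dominant and $(V,\phi)$ is a genuine rational $\sigma$-variety with generic quantifier-free type $p\in S_{\qf}(k)$, a rational type of dimension~$n$ whose realisations are exactly the tuples attached to generic solutions. Under the dictionary of the previous paragraph, for each~$m$ the statement $(C_m)$ translates into ``$p$ is $m$-disintegrated'', and ``$(C_m)$ for all $m$'' into ``$p$ is totally disintegrated''. So it remains to deduce total disintegration of~$p$ from $(n+4)$-disintegration in general, and from $3$-disintegration when $k\subseteq\C$. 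The latter is exactly Theorem~\ref{d3}(c) applied to~$p$. For the former, split on $\dim(p)=n$: if $n=1$, then $(C_{n+4})=(C_5)$ gives $(C_4)$, so $p$ is $4$-disintegrated and Theorem~\ref{d3}(a) yields total disintegration; if $n>1$, put $\ell:=\max\{3,\nmdeg(p)+1\}$ and note $\nmdeg(p)\le\dim(p)+3=n+3$ by Theorem~\ref{thm:nmdeg}(a), so $\ell\le n+4$, whence $(C_{n+4})$ gives $(C_\ell)$, i.e.\ $p$ is $\ell$-disintegrated, and Theorem~\ref{d3}(b) applies. Translating back along the dictionary gives $(C_m)$ for all~$m$.

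The only real work lies in the dictionary: one must verify that $(C_m)$ is genuinely equivalent to, and not merely implied by, $m$-disintegration of~$p$ --- the point being that $(C_{n+4})$ already excludes non-generic solutions, so no spurious tuple of low transcendence degree can intervene --- and one must make sure the numerology $\max\{3,\nmdeg(p)+1\}\le\dim(p)+4$ lines up so that the bound of Theorem~\ref{thm:nmdeg}(a) feeds correctly into Theorem~\ref{d3}(b). The substantive mathematics, namely the passage from finite to total disintegration, has already been done in Theorem~\ref{d3}, so I do not expect any genuinely new obstacle at this stage.
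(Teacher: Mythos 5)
Your proposal is correct and follows essentially the same route as the paper's own proof: the same companion system $(\AA^n,\phi)$, the same two-way dictionary between $(C_m)$ and $m$-disintegration of its generic quantifier-free type (with $(C_1)$ needed for the converse direction, which you correctly flag), and the same case split feeding Theorem~\ref{thm:nmdeg}(a) into Theorem~\ref{d3}(a),(b),(c). No gaps.
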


\begin{proof}
We associate to~(\ref{de}) the $\sigma$-rational variety $(\AA^n, \phi)$ where
$$\phi(y_0,\dots,y_{n-1})=\big(y_1,\dots, y_{n-1}, f(y_0,\dots,y_{n-1})\big).$$
Let $p$ be the generic quantifier-free type of $(\AA^n,\phi)$.
Note that $a=(a_0,a_1,\dots,a_{n-1})$ is a realisation of~$p$ if and only if $a_0$ is a solution to~(\ref{de}), $a_\ell=\sigma^\ell(a_0)$ for all $\ell\leq n-1$, and $\trdeg_k(a)=n$.

\begin{claim}
\label{ctod}
If~$(\ref{de})$ satifies~$(C_m)$  then~$p$ is $m$-disintegrated.
\end{claim}
\begin{claimproof}
Let $a_1,\dots,a_m$ be pairwise $\sigma$-disjoint realisations of~$p$, and write each $a_i=(a_{i,0},\dots,a_{i,n-1})$.
Fixing $r\in\ZZ$, note that if $a_{i,0}=\sigma^r(a_{j,0})$ then
$$a_{i,\ell}=\sigma^\ell(a_{i,0})=\sigma^{\ell+r}(a_{j,0})=\sigma^r(a_{j,\ell})$$
for all $\ell=0,\dots,n-1$, so that $a_i=\sigma^r(a_j)$.
Hence $a_{1,0},\dots,a_{m,0}$ are pairwise $\sigma$-disjoint solutions to~(\ref{de}).
Condition $(C_m)$ therefore implies that $k\langle a_{1,0},\dots,a_{m,0}\rangle$ is of transcendence degree $mn$ over~$k$.
But $k\langle a_{1,0},\dots,a_{m,0}\rangle=k(a_1,\dots,a_m)$, proving that $a_1,\dots,a_m$ are independent over~$k$.
(We are using here that for solutions to the rational type~$p$, algebraic independence agrees with independence.)
\end{claimproof}

\begin{claim}
\label{dtoc}
If~$(\ref{de})$ satisfies~$(C_1)$  and~$p$ is $m$-disintegrated then~$(\ref{de})$ satisfies~$(C_m)$.
\end{claim}
\begin{claimproof}
Suppose $b_1,\dots, b_m$ are pairwise $\sigma$-disjoint solutions to~(\ref{de}).
Let $a_i:=(b_i,\sigma(b_i),\dots,\sigma^{n-1}(b_i))$.
Then $a_1,\dots,a_m$ are also pairwise $\sigma$-disjoint.
As $(C_1)$ implies that each $a_i$ is of transcendence degree~$n$ over~$k$, we have that $a_1,\dots, a_m$ are realisations of~$p$.
It follows from $m$-disintegration that $a_1,\dots, a_m$ are independent over~$k$,
so that $k\langle b_1,\dots,b_m\rangle=k(a_1,\dots,a_m)$ is of transcendence degree~$mn$.
\end{claimproof}

Suppose now that~(\ref{de}) satisfies~$(C_{n+4})$.
By Claim~\ref{ctod}, $p$ is $(n+4)$-disintegrated.
We claim that $p$ is totally disintegrated.
Indeed, if $n=1$ then this follows from Theorem~\ref{d3}(a), whereas if $n>1$ then, as Theorem~\ref{thm:nmdeg}(a) tells us that $\nmdeg(p)\leq n+3$, the total disintegration of~$p$ follows from Theorem~\ref{d3}(b).
Claim~\ref{dtoc} then implies that (\ref{de}) satisfies~$(C_m)$ for all~$m$.

Consider, finally, the case when $k\subset\C$.
If~(\ref{de}) satisfies~$(C_3)$ then $p$ is $3$-disintegrated, by Claim~\ref{ctod}, and hence totally disintegrated, by Theorem~\ref{d3}(c).
So, by Claim~\ref{dtoc}, (\ref{de}) satisfies~$(C_m)$ for all~$m$.
\end{proof}

\bigskip
\section{Two examples}
\label{sec:example}

\noindent
Both examples that we wish to describe will make use of the quantifier-free binding groups we studied in~\cite{qfint}, and reviewed in~$\S$\ref{subsect:nonorth} above.

\medskip
\subsection{A nonminimal rational type with exchange}
\label{xexample}

\noindent
We continue to work in characteristic zero.
Fix an algebraically closed subfield of the fixed field, $k\subset\C$, a simple abelian variety~$A$ over~$k$ of dimension $>1$, and a nontorsion element $t\in A(k)$.
Consider the rational dynamical system $(A,\phi)$ where $\phi(x)=x+t$.
Let $p\in S_{\qf}(k)$ be the generic quantifier-free type of $(A,\phi)$.

\begin{proposition}
$p$ satisfies exchange and is not minimal.
\end{proposition}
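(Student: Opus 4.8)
The plan is to establish the two assertions independently. For non-minimality I use the geometric criterion of Proposition~\ref{prop:min-simple}, so it suffices to exhibit a witness to the non-simplicity of $(A,\phi)$: since $A$ is projective and $\dim A>1$, a general hyperplane section gives (by Bertini, in characteristic zero) an irreducible $C\subsetneq A$ with $0<\dim C<\dim A$. Take $(W,\psi):=(A,\phi)$ and let $Z:=\{(x,w)\in A\times A:x-w\in C\}$, the image of $C\times A$ under the closed immersion $(c,w)\mapsto(c+w,w)$. Then $Z$ is irreducible, surjects onto each copy of $A$, and is invariant under $\phi\times\psi=(+t)\times(+t)$ because $(x+t)-(w+t)=x-w$; moreover $Z\neq A\times A$ while $\dim Z=\dim C+\dim A>\dim A=\dim W$. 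Hence $(A,\phi)$ is not simple and $p$ is not minimal.

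For exchange I argue directly from the definition. Suppose toward a contradiction that $a\models p$, that $b\in\acl(ka)\setminus k$ with $\qftp(b/k)$ rational, but $a\notin\acl(kb)$. Since $k$ is algebraically closed and $b\notin k$ we have $\dim(b/k)\geq 1$; since $b\in\acl(ka)$ we have $\dim(a,b/k)=\dim A$; and $a\notin\acl(kb)$ forces $1\leq\dim W\leq\dim A-1$, where $W:=\loc(b/k)$ with its induced rational $\sigma$-variety structure $(W,\psi)$. Put $Z:=\loc((a,b)/k)\subseteq A\times W$: this is an irreducible invariant subvariety, $Z\to A$ is dominant and generically finite, $Z\to W$ is dominant, and the generic fibre $Z_b=\loc(a/k(b)^{\alg})$ is a proper subvariety of $A$ of dimension $\dim A-\dim W$, hence of dimension between $1$ and $\dim A-1$. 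The crucial point — and this is where $k\subseteq\C$ is used essentially — is the transformation law $Z_{\psi(b)}=Z_b+t$ for the fibres over generic points of $W$: since $\sigma$ fixes $k$ we have $Z^\sigma=Z$ and $\sigma(k(b)^{\alg})=k(\psi(b))^{\alg}\subseteq k(b)^{\alg}$ (as $\sigma(b)=\psi(b)\in k(b)$), and since $\sigma(a)=a+t$ with $t\in A(k)$, applying $\sigma$ to $Z_b=\loc(a/k(b)^{\alg})$ and comparing dimensions gives $\sigma(Z_b)=Z_b+t$, which equals $Z_{\psi(b)}$. (One first makes the routine reduction, replacing $W$ by a finite equivariant cover, that the generic fibre of $Z\to W$ is absolutely irreducible.) Iterating, $Z_{\psi^{(n)}(b)}=Z_b+nt$ for all $n\in\ZZ$.

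Now the contradiction. Let $\mathcal H$ be the Chow variety of $A$ and $\tau\colon W\dashrightarrow\mathcal H$ the classifying map $w\mapsto[Z_w]$; its image has Zariski closure of dimension $\leq\dim W\leq\dim A-1$. On the other hand, since $A$ is simple and $Z_b\subsetneq A$, the subgroup $\{s\in A:Z_b+s=Z_b\}$ is finite, so the orbit map $\nu\colon A\to\mathcal H$, $s\mapsto[Z_b+s]$, is a finite-to-one morphism and $\overline{\nu(A)}$ has dimension $\dim A$; and since $t$ is nontorsion and $A$ is simple, the Zariski closure of $\langle t\rangle$ is all of $A$, so $\{\nu(nt):n\in\ZZ\}=\{[Z_b+nt]:n\in\ZZ\}$ is Zariski dense in $\overline{\nu(A)}$. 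But $[Z_b+nt]=[Z_{\psi^{(n)}(b)}]=\tau(\psi^{(n)}(b))\in\overline{\tau(W)}$ for every $n$, so $\dim\overline{\tau(W)}\geq\dim A$, contradicting $\dim\overline{\tau(W)}\leq\dim A-1$. Hence no such $b$ exists and $p$ satisfies exchange. I expect the main obstacle to be the clean derivation of the transformation law $Z_{\psi(b)}=Z_b+t$ together with the reduction to absolutely irreducible fibres; once one recognises that simplicity of $A$ supplies both the finiteness of the relevant stabiliser and the density of $\langle t\rangle$, the density-of-translates argument in $\mathcal H$ is essentially bookkeeping.
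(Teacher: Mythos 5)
Your proof is correct, but it takes a genuinely different route from the paper's on both halves. For non-minimality, the paper first shows $p\perp^w\C$ and that $p(\U)$ is a coset of $A(\C)$, then concludes from $\dim A>1$; you instead exhibit an explicit witness to non-simplicity (translates of a hyperplane section) and invoke Proposition~\ref{prop:min-simple} --- both work, though the paper's coset computation is also groundwork for its exchange argument. For exchange, the paper runs the binding-group machinery of~\cite{qfint}: it identifies $\aut_{\qf}(\qftp(a/\acl(kb))/\C)$ with $B(\C)$ for an algebraic subgroup $B\leq A$ and uses simplicity to force $B$ finite or $B=A$. You avoid binding groups entirely: the transformation law $\sigma(Y)=Y+t$ for $Y=\loc(a/\acl(kb))$, plus simplicity of $A$ (finiteness of $\Stab(Y)$ and Zariski density of $\ZZ t$), shows that the translates $Y+nt$ sweep out a $\dim A$-dimensional locus in the Chow variety while all being fibres of a family over the $(\leq\dim A-1)$-dimensional base $W=\loc(b/k)$ --- a clean dimension contradiction. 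Your argument is more elementary and self-contained, and makes visible exactly where simplicity and the nontorsion hypothesis enter; the paper's argument, by contrast, illustrates the general binding-group formalism and yields the coset structure of $p(\U)$ as a by-product. The one place you should firm up is the passage from $\sigma(Y)=Y+t$ to ``$[Y+nt]\in\overline{\tau(W)}$'': rather than worrying about whether the scheme-theoretic generic fibre of $Z\to W$ agrees with $\loc(a/k(b)^{\alg})$, it is cleaner to note that the minimal field of definition of $Y$ lies in $k(b)^{\alg}$, so the Chow point $[Y]$ is a rational function $h$ of a suitable generator $b'$ of a finite extension of $k(b)$, and then $h(\sigma^n(b'))=\sigma^n([Y])=[Y+nt]$ since $h$ is over $k\subseteq\C$; this gives the required rational map from a base of dimension $\dim W$ directly. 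Also note that your derivation of $\sigma(Y)=Y+t$ by comparing dimensions is exactly right: $\sigma(Y)=\loc(a+t/k(\sigma b)^{\alg})$ contains the irreducible $Y+t=\loc(a+t/k(b)^{\alg})$ of the same dimension, whence equality.
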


\begin{proof}
Note, first of all, that $p\perp^w\C$.
Indeed, by~\cite[Proposition~2.7]{qfint}, it suffices to show that every invariant rational function, $f:(A,\phi)\dto(\AA^1,\id)$ over~$k$, is constant.
Fixing $a\in A(\U)$ Zariski generic over~$k$, we have that any such~$f$ is (defined) and constant on $a+\ZZ t$.
But, by simplicity, and as $t$ is nontorsion, $a+\mathbb Zt$ is Zariski dense in~$A$, which forces $f$ to be constant.

Next, observe that $p(\U)$ is a coset of $A(\C)$.
Indeed, fixing $a\models p$ we have that for all $b\models p$, $b-a\in A(\C)$, so that $b\in a+A(\C)$.
Conversely, as $p\perp^w\C$, for any $c\in A(\C)$ we have that $a+c$ is still Zariski generic in~$A$ over~$k$, and
$$\sigma(a+c)=\sigma(a)+c=a+t+c=\phi(a+c)$$
so that $(a+c)\models p$.

In particular, $p$ is not minimal as $\dim A>1$.

Let us show, now, that~$p$ satisfies exchange.
Fix $a\models p$ and $b\in\acl(ka)\setminus k$ such that $\qftp(b/k)$ is rational.
We need to prove that $a\in\acl(kb)$.

Let $L:=\acl(kb)$ and $q:=\qftp(a/L)$.
Note that~$q$ is also qf-internal to the constants as the difference (in~$A$) between any two realisations is contained in $A(\C)$.
Also, $q\perp^w\C$, since $a\ind_k\C$ implies $a\ind_L\C$ since $L\subseteq\acl(ka)$, and using the fact that weak orthogonality is an invariant of the quantifier-free type.
Let $\G:=\aut_{\qf}(q/\C)$.
Consider the homomorphism $\G\to A(\C)$ given by $\alpha\mapsto c_\alpha:=\alpha(a)-a$.
Note that $\alpha(x)=x+c_\alpha$ for all $x\models q$.
Indeed,
$$\alpha(x)=\alpha((x-a)+a))=(x-a)+\alpha(a)=x+c_\alpha$$
where the second equality uses the defining condition of quantifier-free binding groups -- namely (A) of~$\S$\ref{subsect:nonorth} above -- and the fact that $x-a\in A(\C)$.
In particular, $\G\to A(\C)$ is injective.
We now describe the image.

Let $V=\loc(a/L)$.
It is an irreducible invariant subvariety of $(A,\phi)$ over~$L$.
Let~$\I$ denote the set of all irreducible invariant subvarieties of $(V^r,\phi)\times(\AA^\ell,\id)$ over~$L$ that project dominantly onto each copy of $V$, as $r,\ell\geq 0$ range.
For any $r,\ell\geq 0$, let $A$ act on $A^r\times\AA^\ell$ by co-ordinatewise translation on $A^r$ and trivially on $\AA^\ell$.
Consider the set~$B$ of all $g\in A$ that, under this action, preserves every variety in~$\I$.
Then $B$ is an algebraic subgroup of $A$ over~$L$.

We claim that the image of $\G\to A(\C)$ lands in $B(\C)$.
Indeed, suppose $\alpha\in\G$ with image $c_\alpha\in A(\C)$.
Let $Z\subseteq V^r\times\AA^\ell$ be an element of~$\I$.
A generic point of $(Z,\phi\times\id)$ over $L(c_\alpha)$ will be Zariski generic over $L(c_\alpha)$, and will be of the form $(u,c)$ where $u$ is an $r$-tuple of realisation of~$q$ and~$c$ is an $\ell$-tuple from~$\C$.
The defining condition of quanitifer-free binding groups (namely condition~(A) of~$\S$\ref{subsect:nonorth}) and the fact that $\alpha(u)=u+c_\alpha$, ensures that $(u+c_\alpha,c)\in Z$.
Hence $c_\alpha(Z)\subseteq Z$, and so by (absolute) irreducibility of~$Z$ we have $c_\alpha(Z)= Z$.
That is, $c_\alpha\in B(\C)$.

Next, we claim that the image of $\G\to B(\C)$ is surjective.
Suppose $d\in B(\C)$.
We first note that translation by~$d$ is a permutation of $q(\U)$.
Fixing $x\models q$, we have that $d+x\in V$ because being in~$B$ in particular implies that translation by~$d$ preserves~$V$.
On the other hand,
$$\trdeg(d+x/L)\geq \trdeg(d+x/L(d))=\trdeg(x/L(d)=\trdeg(x/L)=\dim V$$
where the penultimate equality uses that $q\perp^w\C$ and~$d$ is a $\C$-point.
Hence $d+x$ realises~$q$.
As a similar argument works with $-d$, we have that translation by $d$ is a permutation of $q(\U)$.
To check it is an element of $\aut_{\qf}(q/\C)$, fix a tuple~\(u\) of realisations of~\(q\), and a tuple~\(c\) of elements of~\(\C\).
Since $(u,c)$ and $(u+d,c)$ are both in $(V^r\times\AA^\ell, \phi\times\id)^\sharp$, to show that they have the same quantifier-free type over~$L$ it suffices to show that they have the same Zariski locus.
But $\loc(u,c/L)\in\I$, and hence is preserved by the action of~$d$.
So $\loc(u+d,c/L)\subseteq \loc(u,c/L)$.
As a similar argument works with $-d$, we have that $\loc(u+d,c/L)=\loc(u,c/L)$, as desired.

So $\G\to B(\C)$ is an isomorphism.
If $B=A$ then the $\G$-orbit of $a$ is $a+A(\C)=p(\U)$, which implies that $p$ isolates~$q$.
In particular, this would mean that $a\ind_kL$, contradicting the fact that $b\in\acl(ka)\setminus k$.
Hence $B$ is a proper algebraic subgroup.
As $A$ is a simple abelian variety, this forces $B$ to be finite, and hence~$\G$ is finite.
But then the full automorphism group, $\aut(q/\C)$, which is contained in $\G=\aut_{\qf}(q/\C)$, is finite.
So $a\in\acl(L\C)$.
As $q\perp^w\C$, this implies $a\in L=\acl(kb)$, as desired.
\end{proof}

\medskip
\subsection{A $2$-transitive binding group action}
\label{sec-2trans}
Our goal here is to produce examples which exhibit two phenomena:
\begin{itemize}
\item
degree of nonminimality greater than~$1$, and 
\item
$2$-disintegration without total disintegration.
\end{itemize}
The key will be a $2$-transitive binding group action.

\begin{proposition}
\label{2trans}
Suppose $(V,\phi)$ is an isotrivial rational $\sigma$-variety with binding group $(G,\rho)$ such that the action of~$G$ on~$V$ is by automorphisms and is $2$-transitive.
Let $p$ be the generic quantifier-free type of $(V,\phi)$.
Then $p$ is $2$-disintegrated but not totally disintegrated.
Moreover, if $\dim V>1$ then $\nmdeg(p)>1$.
\end{proposition}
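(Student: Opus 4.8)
The plan is to leverage the characterisation, recalled in $\S$\ref{subsect:nonorth}, of the binding group action. Since $(V,\phi)$ is isotrivial, $p$ is qf-internal to~$\C$ by~\cite[Proposition~3.6]{qfint}, so it has a binding group, which we are assuming is $(G,\rho)$ with $G$ acting on~$V$ by automorphisms and $2$-transitively; and $\{\theta_g:g\in G\}$ is \emph{exactly} the set of birational transformations of~$V$ preserving every irreducible invariant subvariety of $(V^r,\phi)\times(W,\id)$ over~$k$ that projects dominantly onto each $V$-coordinate. I will also use repeatedly that $p\not\perp\C$. There are three statements to establish: $p$ is $2$-disintegrated; $\nmdeg(p)>1$ when $\dim V>1$; and $p$ is not totally disintegrated.

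For $2$-disintegration, suppose $a,b\models p$ are $\sigma$-disjoint with $a\nind_k b$, and put $Z:=\loc(a,b/k)\subseteq V\times V$. As $(a,b)$ is a $\sharp$-point, $Z$ is an irreducible invariant subvariety of $(V\times V,\phi\times\phi)$; it is proper (because $a\nind_k b$) and dominates both factors. Taking $r=2$ and $W$ a point in the characterisation above, every $\theta_g$ acting coordinatewise on $V\times V$ preserves~$Z$, so $Z$ is invariant under the diagonal $G$-action, which by $2$-transitivity is transitive on $(V\times V)\setminus\Delta$. Hence the irreducible closed $G$-invariant~$Z$ is contained in~$\Delta$ or equals $V\times V$; being proper and dominating the first factor it must equal~$\Delta$, forcing $a=b$ and contradicting $\sigma$-disjointness. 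For the $\nmdeg$ statement: when $\dim V>1$, Lemma~\ref{lem:aim-dim1} (with $p\not\perp\C$) shows $p$ is not minimal, so $\nmdeg(p)\geq 1$; if $\nmdeg(p)=1$, witnessed by $a_1,b\models p$ with $b\nind_k a_1$ and $b\notin\acl(ka_1)$, then $2$-disintegration forces $a_1=\sigma^r(b)$ for some $r\in\ZZ$, and since $k\langle c\rangle=k(c)$ for realisations of the rational type~$p$, $b=\sigma^{-r}(a_1)\in\acl(ka_1)$ — a contradiction. So $\nmdeg(p)>1$.

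For the failure of total disintegration I would use a ``GIT quotient'' of $(V^m,\phi^{\times m})$ by the diagonal $G$-action. Since $G$ acts transitively on the infinite variety~$V$ it is infinite, so for~$m$ with $m\dim V>\dim G$ the diagonal $G$-action on $V^m$ has no dense orbit, and the invariant field $k(V^m)^G$ has transcendence degree $\dim Q$ with $1\leq\dim Q<m\dim V$. Because $\phi^{\times m}$ is $\rho$-equivariant it descends to this field, yielding a rational $\sigma$-variety $(Q,\chi)$ and a dominant equivariant rational map $\pi:(V^m,\phi^{\times m})\dto(Q,\chi)$ whose fibres, being unions of diagonal $G$-orbits, project dominantly onto each $V$-coordinate. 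Now take $\bar a=(a_1,\dots,a_m)\models p^{(m)}$; then $q_0:=\pi(\bar a)$ realises the generic qf-type of $(Q,\chi)$, lies in $\dcl(k\bar a)$, and $\bar a$ is Zariski-generic in $\pi^{-1}(q_0)$ over $k(q_0)$, so each $a_i$ is Zariski-generic in~$V$ over $k(q_0)$. Choosing $\bar a'=(a_1',\dots,a_m')\models\qftp(\bar a/kq_0)$ with $\bar a'\ind_{kq_0}\bar a$, every $a_i$ and every $a_j'$ realises~$p$; the $2m$ of them are pairwise $\sigma$-disjoint — within $\bar a$ and within $\bar a'$ since each realises $p^{(m)}$ over~$k$, hence is $k$-independent, and across the two tuples since $a_i\ind_{kq_0}a_j'$ with both Zariski-generic in~$V$ over $kq_0$, so $a_i=\sigma^r(a_j')$ would put $a_i$ into $\acl(kq_0a_j')\cap\acl(kq_0a_i)=\acl(kq_0)$; yet they are dependent over~$k$, because they are $\sharp$-points, so $k\langle\bar a,\bar a'\rangle=k(\bar a,\bar a')$, and $\trdeg(k(\bar a,\bar a')/k)\leq\dim(\bar a/k)+\dim(\bar a'/kq_0)=m\dim V+(m\dim V-\dim Q)<2m\dim(p)$. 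Hence $p$ is not $2m$-disintegrated.

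The main obstacle is this last paragraph: one has to construct $\pi$ carefully as an equivariant quotient in the category of rational $\sigma$-varieties — checking that $k(V^m)^G$ is stable under the partial-$\sigma$ induced by $\phi^{\times m}$, in the spirit of \cite[Proposition~3.2]{qfint} — and then chase genericity through its fibres (this is exactly where transitivity of the $G$-action is used) and do the $\sigma$-disjointness bookkeeping for the cross pairs $a_i,a_j'$. The first two statements, by contrast, fall out quickly once the binding-group characterisation and $2$-transitivity are in hand.
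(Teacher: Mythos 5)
Your proof is correct, and the first two thirds of it are essentially the paper's argument: for $2$-disintegration the paper makes exactly your observation that condition~(B) forces every $G$-element to preserve a proper irreducible invariant $Z\subseteq V^2$ dominating both factors, whence $2$-transitivity forces $Z=\Delta$; and for $\nmdeg(p)>1$ the paper routes the same observation through Proposition~\ref{prop:nmdeg-geom} (showing $(V,\phi)$ is $(V,\phi)$-simple) rather than arguing directly from $2$-disintegration as you do, but the content is identical. Where you genuinely diverge is the failure of total disintegration. The paper disposes of this in two lines: by Remark~\ref{rem:disint}(d), totally disintegrated implies one-based, and a positive-dimensional type that is qf-internal to~$\C$ is not one-based — done. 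You instead build a rational quotient $(Q,\chi)$ of $(V^m,\phi^{\times m})$ by the diagonal $G$-action and exhibit $2m$ pairwise $\sigma$-disjoint but dependent realisations. Your construction does go through, but it carries real technical debt that you yourself flag — one must verify that $k(V^m)^G$ is $\sigma$-compatible so that $(Q,\chi)$ is a rational $\sigma$-variety, and the claim that each $a_i$ is Zariski generic in~$V$ over $k(q_0)$ needs a Rosenlicht-type argument identifying $\loc(\bar a/k(q_0)^{\alg})$ with the ($G^0$-)orbit closure, which then surjects onto each coordinate by transitivity. What your route buys is an explicit witness at an explicit level ($2m$-disintegration fails for any $m$ with $m\dim V>\dim G$), whereas the paper's soft argument gives no quantitative information; what it costs is a page of quotient machinery that Remark~\ref{rem:disint}(d) renders unnecessary for the statement as asserted.
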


\begin{proof}
Since~$p$ is qf-internal to the fixed field it is not one-based, and hence is not totally disintegrated -- see Remark~\ref{rem:disint}(d).

Suppose $Z\subseteq V^2$ is a proper irreducible invariant subvariety projecting dominantly onto both copies of $V$.
By the defining condition of the binding group of $(V,\phi)$, so condition~(B) of~$\S$\ref{subsect:nonorth}, each element of~$G$ must preserve~$Z$.
The $2$-transitivity of the action of~$G$ on~$V$ then forces $Z$ to be the diagonal.
This implies that any pair of distinct realisations of~$p$ must be independent.
So~$p$ is $2$-disintegrated.

Next, suppose $\dim V>1$.
That, together with isotriviality, rules out $(V,\phi)$ being simple.
So we can apply Proposition~\ref{prop:nmdeg-geom} which tells us that to show $\nmdeg(p)>1$ we need only show that $(V,\phi)$ is $(V,\phi)$-simple.
(See Definition~\ref{def:wsimple}.)
But that also follows from the above observation about invariant subvarieties of $V^2$.\end{proof}

So we need to find $(V,\phi)$ as in the hypotheses of Proposition~\ref{2trans}.
In fact, we will produce such an example in every dimension.

We work over an algebraically closed inversive difference field \((k,\sigma)\).
Fix $n\geq 2$ and \(A\in\GL_n(k)\), and 
consider the \(\sigma\)-variety \(\pp{\AA^n,\phi_A}\), where 
\(\phi_A(v)=Av\).
(This is the usual setup of linear difference equations, as 
in~\cite{sp}.) 
Observe that \(\pp{\AA^n,\phi_A}\) is isotrivial.
Indeed, letting $(\GL_n,\psi_A)$ be the $\sigma$-variety where $\psi_A(B)=AB$, we obtain a trivialisation as follows
\[
\xymatrix{
(\AA^n\times\GL_n,\phi_A\times\psi_A)\ar[dr]\ar[rr]^{g}_{\isom}&&
(\AA^n\times\GL_n,\id\times \psi_A)\ar[dl]\\
&(\GL_n,\psi_A)
}
\]
with \(g(v,Z):=(Z^{-1}v,Z)\).

Let $(G,\rho)$ be the binding group of $(\AA^n,\phi_A)$.
Note that vector addition is an equivariant map
$(\AA^n,\phi_A)\times(\AA^n,\phi_A)\to(\AA^n,\phi_A)$,
and that scalar multiplication 
is an equivariant map
$(\AA,\id)\times(\AA^n,\phi_A)\to(\AA^n,\phi_A)$.
The elements of the binding group will preserve the graphs of these operations.
Each element of~$G$ thus extends uniquely to a linear transformation of $\AA^n$, and we can, and do, identify $G$ with an algebraic subgroup of 
\(\GL_n\).

The image~\(a\) of~\(A\) in \(\pgl_n(k)\) similarly determines an 
automorphism \(\phi_a\) of the associated projective space 
\(\PP^{n-1}\), which is similarly isotrivial, with binding group~\((\overline{G},\bar \rho)\) for $\overline G\leq\pgl_n$ an algebraic subgroup.

\begin{lemma}
  The projection from \(\GL_n\) to \(\pgl_n\) maps~$G$ to $\overline G$.
  \end{lemma}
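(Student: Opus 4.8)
The plan is to identify $\overline G$, via the description of binding groups from~\cite{qfint} recalled in~$\S$\ref{subsect:nonorth}, with precisely the group of automorphisms $h$ of $\PP^{n-1}$ satisfying condition~(B) for $(\PP^{n-1},\phi_a)$: for all $r\geq 1$, all trivial dynamics $(W,\id)$ over~$k$, and every irreducible invariant subvariety $\overline Z$ of $((\PP^{n-1})^r,\phi_a)\times(W,\id)$ over~$k$ projecting dominantly onto each $\PP^{n-1}$-coordinate, $h$ preserves $\overline Z$. So, fixing $g\in G\leq\GL_n$ with image $\bar g\in\pgl_n$, I would verify~(B) for $\bar g$. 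The geometric facts to exploit are: the linear projection $\pi\colon\AA^n\dto\PP^{n-1}$ is a dominant equivariant rational map $(\AA^n,\phi_A)\dto(\PP^{n-1},\phi_a)$ (both maps being given by $A$, and $\pi$ being defined over the prime field, there is no $\sigma$-twist to track), it intertwines the group actions in the sense that $\pi\circ g=\bar g\circ\pi$ as rational maps, and it realises $\AA^n\setminus\{0\}$ as a Zariski-locally-trivial $\mathbb{G}_m$-bundle over $\PP^{n-1}$, so that $\pi^{\times r}$ has geometrically irreducible fibres and preimages of irreducible subvarieties under $\pi^{\times r}$ are irreducible. Since $(W,\id)$ forces $W^\sigma=W$, the map $\Pi:=\pi^{\times r}\times\id_W$ satisfies $\Pi^\sigma=\Pi$.

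Given such an $\overline Z$ --- after replacing $W$ by the closure of the projection of $\overline Z$ to $W$ we may assume $\overline Z$ also projects dominantly onto $W$ --- I would take $Z_0$ to be the unique irreducible component of the Zariski closure of $\Pi^{-1}(\overline Z)$ dominating $\overline Z$. The crux is to check that $Z_0$ is a bona fide irreducible invariant subvariety of $((\AA^n)^r,\phi_A)\times(W,\id)$ over~$k$ still projecting dominantly onto each $\AA^n$-coordinate. It is defined over~$k$, being the unique component with a $k$-definable property; it is invariant because $\phi_A$ is an \emph{automorphism} of $\AA^n$ (here $A\in\GL_n$ is essential) and $\Pi$ is equivariant, so $\phi_A^{\times r}\times\id_W$ carries $Z_0$ isomorphically onto the good component of $\Pi^{-1}(\overline Z^\sigma)$, which is exactly $Z_0^\sigma$; and dominance onto each $\AA^n$-coordinate follows from a short genericity count along the $\mathbb{G}_m$-fibres --- a generic point of $Z_0$ has its $i$-th $\AA^n$-coordinate of the form $\lambda_i\tilde w_i$ with $[\tilde w_i]$ Zariski-generic in $\PP^{n-1}$ over~$k$ and $\lambda_i$ transcendental over $k(\tilde w)$, forcing $\loc(\lambda_i\tilde w_i/k)=\AA^n$.

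With $Z_0$ in hand, existential closedness of $(\U,\sigma)$ yields a generic point $(w,b)$ of the rational $\sigma$-variety $Z_0$, which I would choose with $(w,b)\ind_k g$; then $(\bar w,b):=\Pi(w,b)$ is a generic point of $\overline Z$ and $(\bar w,b)\ind_k\bar g$, so $(\bar w,b)$ remains Zariski-generic in $\overline Z$ over $k(\bar g)^{\alg}$. Since $g\in G$ and $Z_0$ meets the hypotheses of~(B), the birational transformation $g$ preserves $Z_0$, so $(gw_1,\dots,gw_r,b)$ is again a point of $Z_0$; applying $\Pi$ and using $\pi(gw_i)=\bar g\,\pi(w_i)$ shows that $\bar g\cdot(\bar w,b)$ lies on $\overline Z$. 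As $(\bar w,b)$ is generic in $\overline Z$ over $k(\bar g)^{\alg}$, this forces $\bar g(\overline Z)\subseteq\overline Z$, and hence $\bar g(\overline Z)=\overline Z$ by equality of dimensions. Since $\overline Z$, $r$, and $W$ were arbitrary, $\bar g\in\overline G$; and as the projection $\GL_n\to\pgl_n$ is already a morphism of algebraic groups, it therefore restricts to a morphism $G\to\overline G$. I expect the only substantive work to lie in the middle paragraph, in confirming that the good component $Z_0$ of the pulled-back variety is invariant and dominates every $\AA^n$-coordinate; the remainder is a routine transport of the defining property of~$G$ along the equivariant map $\pi$.
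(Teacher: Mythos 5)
Your proposal is correct and follows essentially the same route as the paper: pull $\overline Z$ back along the $\mathbb{G}_m$-bundle projection $(\AA^n)^r\times W\dto(\PP^{n-1})^r\times W$, check that the (irreducible, $k$-defined, invariant, coordinatewise-dominant) preimage satisfies the hypotheses of condition~(B) for $(\AA^n,\phi_A)$ so that $g$ preserves it, and push back down to conclude that $\bar g$ preserves $\overline Z$. The paper's proof is simply a condensed version of yours, leaving the verifications in your middle paragraph (irreducibility and invariance of the preimage, dominance of its coordinate projections) implicit.
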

\begin{proof}
	Let \(\bar{g}\in\pgl_n\) be the image of some \(g\in{}G\), 
	and assume that \(X\) is an irreducible invariant subvariety of $((\PP^{n-1})^r,\phi_a)\times (W,\id)\) for some $r\geq 1$ and trivial dynamics $(W,\id)$.
	Then its 
	pre-image in \((\AA^n)^r\times W\) is also irreducible and invariant for $\phi_A\times\id$, so is preserved by \(g\).
	Hence \(X\) is preserved by \(\bar{g}\).
	That is, $\bar g\in\overline G$.
\end{proof}

It follows that if $G=\GL_n$ then $\overline G=\pgl_n$.
Since the action of $\pgl_n$ on $\PP^{n-1}$ is $2$-transitive, $(\PP^{n-1},\phi_a)$ will in that case be our desired example of an isotrivial rational $\sigma$-variety with binding group acting $2$-transitively.

To complete the example, then, we need to know that, for some $(k,\sigma)$, there is $A\in\GL_n(k)$ such that $(\AA^n,\phi_A)$ has binding group all of $\GL_n$.
This follows 
from~\cite[Theorem~3.1]{sp}, an instance of which is that, working over \(k=\CC(t)^{\alg}\) with 
\(\sigma(t)=t+1\), there is $A\in\GL_n(k)$ such that the ``difference Galois group" of the equation $\sigma(v)=Av$ is $\GL_n$.
Now, while we have not shown that our binding group coincides in the 
	linear case with the Galois group as defined in~\cite{sp},
this is in fact the case and follows from~\cite{Moshe}.

Finally, let us point out that our example, when $n>2$, has degree of nonminimality~$2$.
To see this, using Proposition~\ref{prop:nmdeg-geom}, we need only show that $(\PP^{n-1},\phi_a)$ is not $((\PP^{n-1})^2,\phi_a)$-simple.
This is done by observing that the set of collinear triples in $\PP^{n-1}$ (so coplanar triples of lines through the origin in $\AA^n$) forms a proper invariant irreducible subvariety of $(\PP^{n-1})^3$ that projects onto each $\PP^{n-1}$-co\"ordinate and has dimension $3n-4$ (which is strictly bigger than $2n-2$ as $n>2$).

\vfill\pagebreak
\bibliographystyle{plain}


\end{document}